\newtheorem{theorem}{Theorem}
\newtheorem{corollary}{Corollary}
\newtheorem{lemma}{Lemma}
\begin{document}
\title
{Arguments Related to the Riemann Hypothesis: New Methods and Results}
\author{R.C. McPhedran,
School of Physics,\\
University of Sydney 2006, Australia}
\maketitle
\begin{abstract}
Four propositions are considered concerning  the relationship between the zeros of two combinations of the Riemann zeta function and the function itself. The first is the Riemann hypothesis, while the second relates to the zeros of a derivative function. It is proved
that these are equivalent, and that, if the Riemann hypothesis holds, then all zeros of the zeta function on the critical line are simple.
The Riemann hypothesis is then shown to imply the third proposition holds, this being a new necessary condition for the Riemann hypothesis. The third proposition is shown to be equivalent to the fourth, and either is shown to yield the result that the distribution of zeros on the critical line of $\zeta (s)$ is that given by the Riemann hypothesis.The results given are obtained from a combination of analytic arguments, experimental mathematical techniques and graphical reasoning.
\end{abstract}

\section{Introduction}
The Riemann hypothesis is regarded as one of the most important and difficult unsolved problems in mathematics. The hypothesis is that all complex  zeros of the Riemann zeta function $\zeta (s)$ where $\Re (s)=\sigma$, $\Im (s)=t$ lie on $\sigma=1/2$ \cite{edwards, titchmarsh}. In the nearly  hundred and fifty years since Riemann's paper, an extensive literature has developed around the  hypothesis and other  properties of $\zeta (s)$, including many deep analytic results and extensive numerical investigations. However, the hypothesis remains unproved.

Despite the difficulty of this topic, new methods such as those of experimental mathematics \cite{borbail} have arisen, which may permit
the derivation of new results, when combined with a body of established results from the literature. Such a combination is presented in this paper.

The approach followed here relies on certain basic properties of the zeta function, which we will now discuss, based on the texts \cite{edwards, titchmarsh}.  The zeta function has the series
\begin{equation}
\zeta (s)=\sum_{n=1}^\infty \frac{1}{n^s},
\label{intro1}
\end{equation}
which converges absolutely in  $\sigma>1$. It may be continued analytically  into $\sigma<0$ using the functional equation
\begin{equation}
\xi_1 (s)=\frac{\Gamma (s/2) \zeta (s)}{\pi^{s/2}}=\xi_1(1-s),
\label{intro2}
\end{equation}
with an integral form defining it in the {\em critical strip} $0\leq\sigma\leq1$. A variant of the function $\xi_1(s)$ having the same symmetric property under $s\rightarrow 1-s$ is:
\begin{equation}
\xi (s)=\frac{1}{2} s(s-1) \xi_1(s)=\xi (1-s).
\label{intro2a}
\end{equation}
 Riemann was able to prove using the Euler product relation for $\zeta (s)$ that all its zeros lie in the critical strip, and asserted that the number of zeros whose imaginary parts lie between 0 and $T$ is approximately
\begin{equation}
\frac{T}{2\pi} \log \frac{T}{2\pi} -\frac{T}{2\pi} ,
\label{intro3}
\end{equation}
the relative error in the approximation being of order $1/T$. (This was in fact proved by von Mangoldt in 1905 \cite{edwards}.)

Knowing the formula (\ref{intro3}) for the distribution of zeros in the critical strip, the crucial question becomes that of determining the proportion of these lying actually on the critical line. There have been notable successes in proving results close, in some sense, to the Riemann hypothesis that the distribution formula for zeros on the critical line is also given by (\ref{intro3}). For example, Bohr and Landau \cite{edwards, bland} proved that the number of complex roots $t$ of $\xi (1/2+i t)=0$ with $0\leq \Re (t)\leq T$ and $-\varepsilon\leq \Im(t)\leq \varepsilon$ is equal to (\ref{intro3}), for any $\varepsilon\geq 0$, with a relative error which approaches zero as $T\rightarrow \infty$. As another example, Bui, Conrey and Young \cite{bcy} have proved that over 41\% of the zeros of $\zeta (s)$ must lie on the critical line. Extensive numerical studies have been carried out on the zeros  of $\zeta (s)$; it is known that  all such zeros are simple and lie on the critical line, as far as the first $O(10^9)$ zeros are concerned \cite{titchmarsh}, p.391.

Of direct importance to the remainder of this paper are the results concerning antisymmetric/symmetric combinations of two terms involving $\xi (s)$ or
$\xi_1 (s)$ with the arguments differing by one. These are of interest because they have been shown to have the properties which one would like to prove hold true for $\zeta (s)$: all their complex zeros lie on the critical line, and are simple. The functions were studied by Taylor \cite{prt}, Lagarias and Suzuki \cite{lagandsuz} and Ki\cite{ki}. The  fact that all the non-trivial zeros  of the antisymmetric  combination lie on the critical line was first established by P.R. Taylor, and published  
posthumously. (Taylor was in fact killed on active duty with the RAF in North Africa during World War II; the paper was compiled from his notes by Mr. J.E. Rees, while the argument was revised and completed by Professor Titchmarsh.) Lagarias and Suzuki considered the symmetric combination, and showed that all its complex zeros lie on the critical line, while Ki proved that all the complex zeros were simple. A further useful property is that the complex zeros of the symmetric and antisymmetric combinations strictly alternate on the critical line, and have the same distribution function of zeros. The common distribution function is indeed that corresponding to any prescribed argument value of $\xi_1$ on the line $\sigma=1$.

In the remainder of this paper, we build on the work of Taylor \cite{prt}, Lagarias and Suzuki \cite{lagandsuz} and Ki\cite{ki}. 
Section 2 contains a discussion of an important figure, which encapsulates the key results of this paper. It is followed by a discussion of
key theorems and definitions from  the work of Taylor \cite{prt}, Lagarias and Suzuki \cite{lagandsuz} and Ki\cite{ki}. In Section 4, two functions denoted ${\cal U}(s)$ and ${\cal V}(s)$ are studied.
The first is formed from the ratio of $\xi_1( 2 s-1)$ and $\xi_1(2 s)$, while the second is constructed from the ratio of the sum and difference of the two preceding functions. (The factor of two multiplying $s$ in the arguments of these functions is inserted for generality; it has the result that the distribution function of zeros then becomes the same as that of certain double sums: see Chapter 3 of Borwein {\em et al} \cite{lsbook} and further comments in Section 3.) The function ${\cal V}(s)$ has all its complex zeros and poles placed alternately on the critical line:
we can use this to reason {\em from the inside out}: by this we mean that the central properties are exploited as we move away from
$\sigma=1/2$.  The function ${\cal U}(s)$ has its zeros in $1>\sigma>1/2$ and its poles in $0<\sigma<1/2$: the distribution function of each in the critical strip is given  by (\ref{intro3}) (if $T$ is replaced by $2 T$ to take into account the replacement of $s$ by $2 s$).
We may refer to ${\cal U}(s)$ as enabling reasoning {\em from the outside in}. 

Section 4 introduces four propositions, which give structure to the new results presented in it and Section 5. The first proposition is the Riemann hypothesis, as applied to $\zeta(2 s-1)$ and $\zeta(2 s)$. The second is in the style of arguments concerning the location of zeros of $\zeta'(s)$ first put forward by Speiser \cite{speiser}. The third and fourth are new, and refer to the modulus of  ${\cal V}(s)$ at points where its derivative is zero, and to the connection between zeros of ${\cal V}(s)$ and points on the critical line where its modulus is unity.

In Section 4, the first important result established is that, if the Riemann hypothesis holds then the real part of the derivative of
$\log {\cal U}(s)$ does not change sign either between the lines $\sigma=3/4$ and $\sigma =1/4$ or on them (guaranteeing Proposition 2 holds).  Consequently, the Riemann hypothesis implies that all zeros of $\zeta (s)$ on the critical line are simple.  It is next shown
that Proposition 2 implies Proposition 1, so the two are equivalent. Section 5 is concerned with the new propositions. It is shown that
if Propositions 1 and 2 hold, this  implies Proposition 3 holds. Also, Propositions 3 and 4 are shown to be equivalent. Going in the reverse direction, it is shown that if Proposition 3 holds, then the distribution function of zeros of $\zeta (2s -1/2)$ on the critical line is that given by the Riemann hypothesis. Section 5 ends with a discussion of the topology of the regions defined by the argument of ${\cal U}(s)$.

The key differences between this and the previous version of this paper relate to the results given here in Sections 4.1 and 4.2. These algebraic arguments have been placed earlier in the current version, as they may be more readily appreciated by workers in the field than the graphical arguments of the previous version. The more graphical arguments are mostly given in Section 5, although in Section 4.2 a graphical example is given illustrating an artificial counterexample to the Riemann hypothesis, and the resulting differences in functional structures.

Key points in the behaviour of the analytic functions studied are graphically illustrated in the paper. Supplementary material gives the zeros of the functions ${\cal T}_+(s)$ and  ${\cal T}_-(s)$ on the critical line up to $t=1000$. The definitions of all functions studied are easily implemented in sophisticated computer packages like Mathematica and Maple.
\section{A Revealing Figure}
The topic concerning us in this paper is, in essence, centred around the properties of two functions. That denoted ${\cal V}(s)$ has its fundamental properties well determined. It is a ratio of the symmetric and antisymmetric combinations of  functions $\xi (2 s-1)$ and $\xi (2 s)$, all its zeros and poles lie on the critical line $\sigma=1/2$, and their distribution function is established \cite{prt}-\cite{ki}. For that denoted ${\cal U}(s)$, 
where
\begin{equation}
{\cal U}(s)=\frac{\xi_1(2s -1)}{\xi_1(2 s)},
\label{eq-srh2}
\end{equation}
the location of its zeros and poles is the subject of the important but unproved Riemann hypothesis, it is undetermined whether they are of first or higher order, and the distribution function of its zeros {\em on the critical line} is keenly sought. It is known, however, that the distribution function of its zeros {\em within the critical strip} $0<\sigma<1$ is exactly that pertaining to the zeros or poles of ${\cal V}(s)$
on the critical line. We would like to link as tightly as possible the properties of the ``target function"  ${\cal U}(s)$ to those of the ``reference function''  ${\cal V}(s)$.

In order to do this, we consider the simple relationship linking these two functions:
\begin{equation}
{\cal V}(s)=\frac{1+{\cal U}(s)}{1-{\cal U}(s)}.
\label{vfromu}
\end{equation}
It is readily seen that this transformation has two fixed points ${\cal V}(s)={\cal U}(s)=i$ and ${\cal V}(s)={\cal U}(s)=-i$. These fixed points are made the zeros and poles of a  function introduced previously \cite{mcp13}:
\begin{equation}
{\cal W}(s)=\frac{{\cal V}(s)-i}{{\cal V}(s)+i}=i \left( \frac{{\cal U}(s)-i}{{\cal U}(s)+i}\right).
\label{wdef}
\end{equation}
Figure \ref{fig-revealing} shows contours $|{\cal W}(s)|=1$, and compares them with contours $|{\cal V}(s)|=1$.

 \begin{figure}[tbh]
\includegraphics[width=10cm]{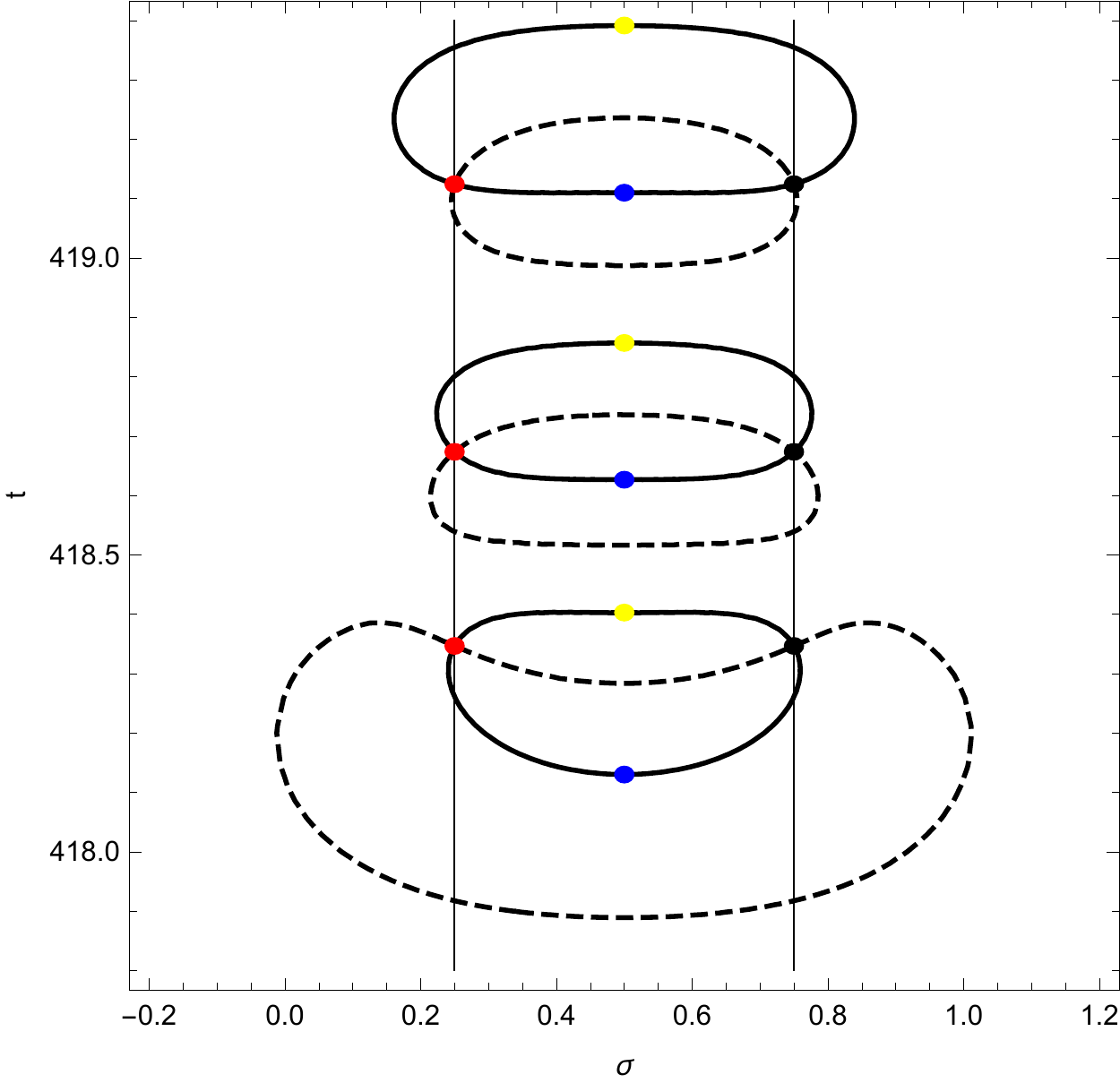}
\caption{The black full lines denote contours $|{\cal W}(s)|=1$, while the  black dashed lines denote contours $|{\cal V}(s)|=1$. 
Blue dots denote zeros of ${\cal V}(s)$, while yellow dots denote poles. Black dots denote zeros of ${\cal U}(s)$, while red dots denote poles. The Riemann hypothesis locates zeros of ${\cal U}(s)$ on $\sigma=3/4$, and poles on $\sigma=1/4$.}
\label{fig-revealing}
\end{figure}

What is evident in Fig. \ref{fig-revealing} is that zeros and poles of ${\cal U}(s)$ sit between zeros and poles of ${\cal V}(s)$ on contours
$|{\cal W}(s)|=1$, with the former lying where $|{\cal W}(s)|=1$ intersects $|{\cal V}(s)|=1$. These properties are important, because they establish a one-to-one correspondence between zeros of ${\cal V}(s)$ or ${\cal W}(s)$ and those of ${\cal U}(s)$. As remarked, we know that the density function of the first is that corresponding to that of  the zeros of  ${\cal U}(s)$ everywhere in $0<\sigma<1$ . However,
zeros of ${\cal U}(s)$ off the  line $\sigma=3/4$ occur not singly, but in pairs, so that such pairs must be sufficiently rare  to not disturb the
distribution function. Hence, we can deduce from Fig. \ref{fig-revealing}, {\em if the geometry shown holds for all zeros} of ${\cal W}(s)$, then almost all zeros of ${\cal U}(s)$ lie  on the  line $\sigma =3/4$. We also know that zeros of ${\cal U}(s)$ lying on a contour $|{\cal W}(s)|=1$ which has a simple zero of ${\cal W}(s)$ at its centre must also be simple.  Hence, we further deduce that almost all zeros of ${\cal U}(s)$ are simple.

It is one purpose of the remainder of this paper to unite known results  and new results in order to show that Fig. \ref{fig-revealing} represents a universal behaviour.

\section{Properties of Combinations of Zeta Functions}
In this section, we review the properties of combinations of $\xi$ functions and their zeros, which will be needed for the arguments presented in Section 3.

\subsection{The Results of Taylor}
The posthumous paper of P.R. Taylor contains results on five related topics, only the first of which is of interest here. The following result is proved:
\begin{theorem}
(Taylor)  The function 
$f(s)=\xi_1(s+1/2)-\xi_1(s-1/2)$, odd under $s\rightarrow 1-s$,  has all its complex zeros on $\sigma=1/2$.
\label{prtthm}
\end{theorem}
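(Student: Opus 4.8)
The plan is to follow the classical strategy pioneered by Hardy and Pólya for such antisymmetric combinations, adapted to $\xi_1$. First I would record the functional equation $\xi_1(s) = \xi_1(1-s)$ and rewrite $f$ on the critical line. Setting $s = 1/2 + it$, the two shifted arguments become $s \pm 1/2 = 1 + it$ and $it$ (equivalently $1 - it$ by symmetry of $\xi_1$), so $f(1/2+it) = \xi_1(1+it) - \xi_1(it)$. Using $\xi_1(it) = \xi_1(1-it) = \overline{\xi_1(1+it)}$ when $t$ is real — because $\xi_1(\bar s) = \overline{\xi_1(s)}$ — this is $\xi_1(1+it) - \overline{\xi_1(1+it)} = 2i\,\Im\,\xi_1(1+it)$, a purely imaginary quantity that is a real function of real $t$. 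The point of this reduction is that zeros of $f$ on the critical line correspond to genuine sign changes of an explicit real analytic function, which one controls by counting.

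The core of the argument is then a counting/comparison of two zero-counts. On one side, one obtains an \emph{upper} bound for the total number of complex zeros of $f$ in a box $0 \le t \le T$ via the argument principle: since $f(s) = \xi_1(s+1/2) - \xi_1(s-1/2)$ inherits growth and order-$1$ estimates from $\xi_1$ (itself of order $1$), the number of zeros with $|\Im s| \le T$ is $\frac{T}{\pi}\log T + O(T)$ by a contour integral of $f'/f$ around a large rectangle, exactly as in the von Mangoldt count for $\xi$. On the other side, one produces a \emph{lower} bound for the number of zeros \emph{on} the critical line by locating that many sign changes of $\Im\,\xi_1(1+it)$: this is done by exhibiting points $t_k$ where the sign of $\Im\,\xi_1(1+it)$ is forced — for instance by isolating the dominant term in a Dirichlet-type expansion of $\xi_1$ near $\sigma = 1$, or by using known asymptotics of $\arg\xi_1(1+it)$, which grows like $\frac{T}{2\pi}\log T$ — and showing the real function changes sign between consecutive such points. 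If the lower bound matches the upper bound to leading order \emph{and} one can push the error terms to agree, every complex zero must lie on $\sigma = 1/2$; more carefully, one argues that any off-line zero comes paired (by the two symmetries $s \mapsto 1-s$ and $s \mapsto \bar s$) with three others, so off-line zeros would create a deficit of at least four in the on-line count relative to the total, contradicting equality.

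I would organize the steps as: (1) the symmetry reduction above, giving $f$ on $\sigma=1/2$ as $2i$ times a real analytic function $g(t) := \Im\,\xi_1(1+it)$; (2) the global zero-count $N_f(T) = \frac{T}{\pi}\log T + O(T)$ from the argument principle, using the order-$1$ growth of $\xi_1$; (3) asymptotic control of $g(t)$ — writing $\xi_1(1+it) = \pi^{-(1+it)/2}\Gamma((1+it)/2)\zeta(1+it)$ and tracking its argument, so that $g$ has a sign change in each interval where $\arg\xi_1(1+it)$ crosses a multiple of $\pi$; (4) counting those crossings to get $N_f^{(0)}(T) \ge \frac{T}{\pi}\log T + O(T)$ with matching main term; (5) concluding via the pairing argument that no zeros lie off the line.

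The main obstacle I anticipate is step (3)–(4): getting a lower bound on the number of on-line sign changes that is strong enough — i.e. with a provably small enough error term — to close the gap against the upper bound from step (2). Near $\sigma = 1$ the series for $\zeta$ does not converge, so isolating a ``dominant term'' requires care; one must use effective estimates for $\zeta(1+it)$ and for $\arg\Gamma$, and control the cumulative error so that the inequality $N_f^{(0)}(T) \ge N_f(T) - O(1)$ (or at least with the same main term and a controlled discrepancy) actually holds. This is precisely the delicate analytic heart of Taylor's argument as completed by Titchmarsh, and it is where the bulk of the technical work lies; the rest is bookkeeping with the functional equation and the argument principle.
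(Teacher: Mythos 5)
Your overall strategy is the same one the paper attributes to Taylor (the paper does not reproduce the proof; it only records that Taylor ``examines using the argument principle the difference between the number of zeros in the critical strip and on the critical line... and bounding the difference in such a way as to give the result''). Your symmetry reduction in step (1) is correct: $f(1/2+it)=\xi_1(1+it)-\xi_1(it)=2i\,\Im\,\xi_1(1+it)$, so on-line zeros are exactly the points where $\arg\xi_1(1+it)$ crosses a multiple of $\pi$ (note $\xi_1(1+it)\neq 0$). Steps (2) and (5) are also sound in outline.

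The genuine gap is that, as written, your comparison in steps (2)--(4) cannot yield the conclusion. You obtain $N_f(T)=\frac{T}{\pi}\log T+O(T)$ from the argument principle and $N_f^{(0)}(T)\ge \frac{T}{\pi}\log T+O(T)$ from sign changes; agreement of main terms with $O(T)$ (or even $O(\log T)$) errors is compatible with there being unboundedly many off-line zeros, since each off-line quadruple costs only a bounded deficit. What the theorem requires is the pointwise inequality $N_f(T)-N_f^{(0)}(T)<4$ for every $T$ (in your two-sided box; $<2$ in the upper half-plane), i.e.\ a \emph{bounded}, indeed sub-multiplicity, discrepancy between the two counts, valid uniformly and not just asymptotically. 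That bound is the entire content of Taylor's theorem, and your proposal explicitly defers it (``if ... one can push the error terms to agree''). Nothing in steps (3)--(4) as described produces it: lower-bounding sign changes of $\Im\,\xi_1(1+it)$ by isolating a dominant term near $\sigma=1$ gives a density result, not an exact count. The step that closes the gap in the modern treatments (Lagarias--Suzuki, Ki, quoted in the paper as the lemma on $\theta(t)$) is to prove that $\arg\xi_1(1+it)$, after correction by an explicit elementary factor, is \emph{monotonically increasing} for $t$ beyond a small explicit threshold; monotonicity converts the lower bound on sign changes into an exact count equal to the total argument variation, which is then matched exactly against the contour integral of $f'/f$ around the strip. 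Without either that monotonicity statement or Taylor's original uniform bounding of the discrepancy, the proof does not close. Handling the finitely many small-$t$ zeros (below the monotonicity threshold) also needs a separate explicit check, which you do not mention.
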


Taylor's proof examines using the argument principle the difference between the number of zeros in the critical strip and on the critical line with $0<t<T$, and bounding the difference in such a way as to give the result.

\subsection{The Results of Lagarias and Suzuki}
The substantial paper by Lagarias and Suzuki \cite{lagandsuz} was written at around the same time as that of Ki \cite{ki}. Although the two papers overlap in some results, there are results in both we will need for the arguments of Section 3.

The main theorems proved by Lagarias and Suzuki are now given, using our notations.

\begin{theorem}(Lagarias and Suzuki )

The meromorphic function, even under $s\rightarrow 1-s$,
\begin{equation}
\xi_1(2 s) \frac{1}{s-1}-\xi_1(2 s-1) \frac{1}{s}
\label{lseq1}
\end{equation}
has all its zeros on the critical line.
\label{lsthm1}
\end{theorem}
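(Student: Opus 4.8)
The plan is to first recast the meromorphic function in (\ref{lseq1}) in terms of the entire function $\xi$. From $\xi(z)=\tfrac12 z(z-1)\xi_1(z)$ one has $\xi_1(2s)=\xi(2s)/\bigl(s(2s-1)\bigr)$ and $\xi_1(2s-1)=\xi(2s-1)/\bigl((2s-1)(s-1)\bigr)$, so
\[
\frac{\xi_1(2s)}{s-1}-\frac{\xi_1(2s-1)}{s}=\frac{\xi(2s)-\xi(2s-1)}{s(s-1)(2s-1)}=:\frac{G(s)}{s(s-1)(2s-1)}.
\]
Since $G$ is entire, the zeros of (\ref{lseq1}) are exactly those of $G$ not cancelled by the denominator. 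The only candidates $s\in\{0,\tfrac12,1\}$ are disposed of at once: using $\xi(0)=\xi(1)=\tfrac12$ and $\xi(2)=\zeta(2)/\pi\neq\tfrac12$ one gets $G(0),G(1)\neq0$ (so $s=0,1$ become simple poles of (\ref{lseq1})), while $G(\tfrac12)=0$ simply, since $G'(\tfrac12)=4\xi'(1)\neq0$, and the factor $2s-1$ in the denominator absorbs this zero. Hence it suffices to prove that every zero of $G(s)=\xi(2s)-\xi(2s-1)$ lies on $\sigma=\tfrac12$. This is the even analogue of Theorem \ref{prtthm} and, as far as I can see, does not follow from it directly.

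Writing $w=2s$, the task is to show $\xi(w)\neq\xi(w-1)$ whenever $\Re w>1$; the functional equation in the form $\xi(w-1)=\xi(2-w)$, equivalently $G(s)=-G(1-s)$, then takes care of $\Re w<1$. I would prove the sharper inequality $|\xi(w)|>|\xi(w-1)|$ for $\Re w>1$ by a Phragm\'en--Lindel\"of argument applied to $v(w):=\log|\xi(w-1)|-\log|\xi(w)|$ on the half-plane $\Omega=\{\Re w>1\}$. This $v$ is subharmonic on $\Omega$, because $\log|\xi(w-1)|$ is subharmonic everywhere while $-\log|\xi(w)|$ is harmonic on $\Omega$ --- this is the one place the argument uses that the zeros of $\xi$ are confined to the critical strip, which combined with the classical fact $\zeta(1+it)\neq0$ shows $\xi$ has no zeros with $\Re w\geq1$. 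On the boundary $\Re w=1$ one has $v\equiv0$, because $\xi(1+it)=\overline{\xi(it)}$ forces $|\xi(1+it)|=|\xi(it)|$, with no exceptional points. Finally $v(w)=O(\log|w|)$ as $|w|\to\infty$ in $\overline\Omega$: Stirling's formula applied to $\xi(w)=\tfrac12 w(w-1)\pi^{-w/2}\Gamma(w/2)\zeta(w)$ and to $\xi(w-1)$ shows that the dominant $e^{-\pi|\Im w|/4}$ factors cancel, leaving only $O(\log|w|)$ discrepancies from the polynomial, the $\pi$-power, and the $\zeta$ factors (the last controlled by standard bounds for $\zeta$ in $\Re\geq\tfrac12$). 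In particular $v=o(|w|)$, so Phragm\'en--Lindel\"of for subharmonic functions on a half-plane yields $v\leq0$ on $\Omega$; and since $v(2)=\log\bigl(|\xi(1)|/|\xi(2)|\bigr)=\log(3/\pi)<0$, the strong maximum principle upgrades this to $v<0$ throughout $\Omega$, i.e. $|\xi(w)|>|\xi(w-1)|$ strictly. Therefore $G$ has no zeros with $\sigma\neq\tfrac12$; that it nonetheless has infinitely many on the line follows from $G(\tfrac12+it)=-2i\,\Im\xi(2it)$, whose real factor changes sign roughly as often as the main term of (\ref{intro3}) with $T$ replaced by $2T$ dictates.

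The step I expect to be most delicate is the growth estimate $v(w)=o(|w|)$. It is morally obvious, since the $\Gamma$-factors of $\xi(w)$ and $\xi(w-1)$ have the same exponential rate; but making this uniform over the whole half-plane $\Re w\geq1$ --- in particular as $\Re w\to\infty$ with $\Im w$ bounded, where $|\xi(w)|$ and $|\xi(w-1)|$ both blow up --- needs a clean uniform Stirling estimate together with the (standard, but not trivial) convexity bounds for $\zeta(s)$ near and to the right of $\sigma=1$; one must also invoke a version of Phragm\'en--Lindel\"of for subharmonic functions that tolerates the $-\infty$ singularities of $v$ at the points $w=1+\rho$, $\rho$ a zero of $\zeta$. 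A more classical alternative, bypassing potential theory, is to imitate Taylor's proof of Theorem \ref{prtthm}: count the zeros of $G$ inside a tall rectangle enclosing $0\leq\sigma\leq1$ by the argument principle (the main term coming from the $\Gamma(s)$-factor inside $\xi(2s)$), count the zeros on the line from the sign changes of $\Im\xi(2it)$, and note that the two counts --- both asymptotic to (\ref{intro3}) with $T\mapsto 2T$ --- can be equal only if no zero lies off the line. In that route the hard point is instead bounding the variation of $\arg G(s)$ along the two vertical sides of the rectangle.
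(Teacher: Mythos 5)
The paper does not actually prove this theorem: it is quoted verbatim from Lagarias and Suzuki \cite{lagandsuz} with a citation, so there is no internal proof to compare yours against. Judged on its own terms, your argument is correct and is, in essence, the standard route to this result. The opening reduction is right and is the key observation: the weights $1/(s-1)$ and $1/s$ are exactly what is needed to turn the combination of $\xi_1$'s into $\bigl(\xi(2s)-\xi(2s-1)\bigr)/\bigl(s(s-1)(2s-1)\bigr)$, so that everything rests on the unconditional inequality $|\xi(w)|>|\xi(w-1)|$ for $\Re w>1$ (this is precisely the lemma on which Lagarias--Suzuki and Ki build; note it does \emph{not} survive division by the polynomial factors, which is why Taylor's Theorem \ref{prtthm} for the $\xi_1$-difference is genuinely harder and needs the argument-principle count). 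Your bookkeeping at $s=0,\tfrac12,1$ and the use of oddness of $G$ to handle $\Re s<\tfrac12$ are both fine.

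Two small refinements. First, you can avoid potential theory and the $-\infty$ singularities altogether by applying the ordinary Phragm\'en--Lindel\"of principle to the holomorphic function $h(w)=\xi(w-1)/\xi(w)$ on $\Re w>1$: it has no poles there (since $\xi$ is zero-free in $\Re w\ge 1$), satisfies $|h|=1$ on $\Re w=1$, and has at worst polynomial growth; the zeros of $\xi(w-1)$ inside the half-plane are just zeros of $h$ and cause no difficulty. Second, be precise about where $\zeta(1+it)\neq 0$ enters: harmonicity of $-\log|\xi(w)|$ in the \emph{open} half-plane needs only the Euler product, whereas the non-vanishing on $\sigma=1$ is what makes the boundary values of $v$ (equivalently $|h|=1$) well defined at every boundary point. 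The one genuinely technical ingredient you correctly flag is the growth estimate, which requires a lower bound for $|\zeta(w)|$ uniformly down to $\Re w=1$ (the classical zero-free-region bound $1/\zeta(\sigma+it)=O(\log^{7}t)$ for $\sigma\ge 1$ suffices, giving $v(w)=O(\log|w|)=o(|w|)$); with that in place the argument closes. The final sentence about infinitely many zeros on the line is heuristic as written, but the theorem as stated does not require it.
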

The next result quoted is for a more general function, again even under $s\rightarrow 1-s$:
\begin{theorem}(Lagarias and Suzuki )

For each fixed $T\geq 1$, the meromorphic function
\begin{equation}
I(T,s)=-\xi_1(2 s) \frac{T^{s-1}}{s-1}+\xi_1(2 -2 s) \frac{T^{-s}}{s}
\label{lseq2}
\end{equation}
has all its zeros on the critical line.
\label{lsthm2}
\end{theorem}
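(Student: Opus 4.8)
\textit{Proof proposal.} The plan is to adapt the argument-principle method that Taylor used for Theorem~\ref{prtthm}: show that $I(T,s)$ is real on the critical line up to a strictly positive factor, count its real zeros there, count the total number of its zeros in the strip by the argument principle, and show the two counts coincide so that no zero can lie off $\sigma=1/2$.

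On the line $s=\tfrac12+it$ the functional equation gives $\xi_1(2-2s)=\xi_1(2s-1)$, and with the reflection $\overline{\xi_1(\bar w)}=\xi_1(w)$ together with $\bar s=1-s$ one checks that $\xi_1(2s)$ and $\xi_1(2s-1)$ are complex conjugates there, while $\overline{T^{s-1}}=T^{-s}$ and $\overline{1/(s-1)}=-1/s$. Hence the two summands of $I(T,s)$ are mutually conjugate, and writing $\xi_1(2it)=\rho(t)e^{i\phi(t)}$ with $\rho,\phi$ real one gets
\begin{equation}
I(T,\tfrac12+it)=\frac{2\,\rho(t)}{\sqrt{T}\,\sqrt{1/4+t^2}}\,\cos\bigl(t\log T+\arg(\tfrac12+it)-\phi(t)\bigr).
\label{realonline}
\end{equation}
Since $\rho(t)=|\xi_1(2it)|$ vanishes for real $t\neq0$ only if $\zeta(2it)=0$, which never occurs, the zeros of $I(T,s)$ on the line are exactly the zeros of the cosine in \eqref{realonline}; their number up to height $T_0$ is $\tfrac1\pi$ times the total variation of the phase $\Phi(t)=t\log T+\arg(\tfrac12+it)-\phi(t)$, up to a bounded error, and Stirling applied to $\phi(t)=\arg\xi_1(2it)$ shows this equals the Riemann--von Mangoldt count expected for the zeros of $I$ in the strip up to $T_0$.

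For the upper bound I would apply the argument principle to $I(T,s)$ on the rectangle $-\varepsilon\le\sigma\le1+\varepsilon$, $|t|\le T_0$, indented around the simple poles at $s=0,\tfrac12,1$ (the one at $s=\tfrac12$ in fact cancels), and use $I(T,s)=I(T,1-s)$ to halve the work. On the horizontal edges Stirling for the $\Gamma$-factor controls $\arg I$; the delicate edge is $\sigma=1+\varepsilon$ (and, symmetrically, $\sigma=-\varepsilon$), where one must show the summand $-\xi_1(2s)T^{s-1}/(s-1)$ dominates $\xi_1(2s-1)T^{-s}/s$ in modulus. Their ratio carries a factor $T^{2\sigma-1}\ge T^{1+2\varepsilon}\ge1$, a factor $|s/(s-1)|$ which is large for $|t|$ bounded once $\varepsilon$ is small, and a $\Gamma$-ratio $\asymp|t|^{1/2}$ which is large for $|t|$ large, the $\zeta$-factors being harmless in $\Re\ge1+2\varepsilon$; so, with $\varepsilon$ chosen well, $\arg I$ along that edge is, up to $O(1)$, the argument of the single explicit term. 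Summing the edge contributions then bounds the number of zeros in the strip by the number already found on the line, to within bounded corrections; since the latter zeros are among the former, equality is forced and every complex zero of $I(T,s)$ lies on $\sigma=1/2$.

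The main obstacle I expect is making the vertical-edge domination, and its $O(1)$ error, uniform for all $T\ge1$ at once --- and in particular at $T=1$, where the factor $T^{2\sigma-1}$ collapses and the domination for moderate $|t|$ becomes borderline; that boundary case is precisely Theorem~\ref{lsthm1}, which one may invoke directly or establish first and then propagate to $T>1$. An attractive alternative, which avoids these estimates entirely, is to recognise that $I(T,s)$ is the truncated constant term $\int_T^\infty$ of the completed Eisenstein series $\xi_1(2s)E(iy,s)$ along the imaginary axis: the Maass--Selberg relation then expresses a manifestly nonnegative truncated inner product in terms of $I(T,s)$ and $\overline{I(T,\bar s)}$, and the reality of the zeros follows from that positivity. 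I would try the argument-principle route first for self-containedness, and fall back on the Eisenstein-series identity if the edge estimates prove stubborn.
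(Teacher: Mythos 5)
First, a point of reference: the paper offers no proof of this statement --- it is quoted as a known result of Lagarias and Suzuki \cite{lagandsuz} --- so there is no internal argument to compare yours against. Your fallback route, realising $I(T,s)$ as the constant term of a truncated Eisenstein series and extracting the reality of the zeros from the positivity in the Maass--Selberg relation, is in substance the method of the cited source; that part of your proposal is the right move and is, in effect, the missing proof once removed.

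Your primary, argument-principle route has the correct skeleton (the reality of $I(T,\tfrac12+it)$ up to a positive factor checks out, the pole at $s=\tfrac12$ does cancel, and $I(1,s)$ is indeed the negative of the function in Theorem \ref{lsthm1}), but two steps are genuinely incomplete. (i) The conclusion ``the strip count exceeds the line count by a bounded correction, hence equality is forced'' is a non sequitur: a uniform $O(1)$ defect only shows that finitely many zeros lie off the line. To exclude them all you must drive the defect below $2$ at every height, or dispose of the finitely many exceptions by a separate argument; this is precisely where Taylor's proof of Theorem \ref{prtthm} expends its effort, and it is the crux, not a ``bounded correction''. (ii) The vertical-edge domination is worse than ``delicate'': on $\sigma=1+\varepsilon$ the controlling factor is $T^{1+2\varepsilon}\,\bigl|s/(s-1)\bigr|\,\bigl|\Gamma(s)/(\sqrt{\pi}\,\Gamma(s-\tfrac12))\bigr|\,\bigl|\zeta(2s)/\zeta(2s-1)\bigr|$, and for $T$ near $1$ and $t$ of moderate size (say $t\approx 3$, where $\sqrt{|s|/\pi}<1$ while $|s/(s-1)|$ is already close to $1$) this product need not exceed unity, so pointwise domination fails and you must instead control the variation of $\arg I$ along the edge by a Backlund-type count of the zeros of $\Re I$ there. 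That the rational factors $1/(s-1)$ and $1/s$ are doing essential, non-perturbative work is shown by Theorem \ref{lsthm3}: the companion function $a_0(y,s)$, which lacks them, acquires off-line zeros once $y>y^*$, so no estimate that treats these factors as harmless for large $T$ can close the argument uniformly in $T\geq 1$.
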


The third example is motivated by an Eisenstein series result, commonly viewed as due to Chowla and Selberg \cite{chowsel}, but in its essence predated by a paper of H. Kober \cite{kober}.  Kober's equation (5a) is:
\begin{eqnarray}
&& (a c-b^2)^{s/2-1/4} a^{1/2}\frac{\Gamma (s)}{8\pi^s} \sum_{(m_1,m_2)\neq (0,0)} \frac{1}{(a m_1^2 +2 b m_1 m_2 +c m_2^2)^s}\nonumber\\
&&=\frac{1}{4}u^{-(s-1/2)} \xi_1(2 s-1)+\frac{1}{4}u^{(s-1/2)} \xi_1(2 s)\nonumber \\
& &+\sum_{m_1,m_2=1,\infty}\left(\frac{m_1}{m_2}\right)^{(s-1/2)}
K_{s-1/2} (2\pi u m_1 m_2) \cos (2 \pi v m_1 m_2),
\label{kobereq}
\end{eqnarray}
where $v=b/a$ and $u^2+v^2=c/a$. The double sum on the left-hand side runs over all pairs of positive or negative integers, excluding the origin. This sum has been well studied, with the aim of discovering for what special values of $a$, $b$ and $c$ it can be represented as a finite superposition of products of Dirichlet $L$ functions- see Table 1.6 of Borwein {\em et al} \cite{lsbook} for known results. In general, if more than one Dirichlet $L$ product appears in the result for a sum, it will have non-trivial zeros off the critical line.

The Macdonald function sum on the right-hand side is rapidly convergent for the imaginary part of $s$ not large, but the region of summation required for accuracy increases linearly with the imaginary part.  This sum can have zeros off the critical line \cite{jmpus}.

The first term on the right-hand side has been studied by both Lagarias and Suzuki, and by Ki. Its zeros are governed
in the following result:
\begin{theorem}(Lagarias and Suzuki)
For each $y\ge 1$ the zeros of the function
\begin{equation}
a_0(y,s)=\xi_1(2 s) y^s+\xi_1(2-2 s) y^{1-s}
\label{lseq3}
\end{equation}
have the properties that, for $1\leq y\leq y^*=4\pi e^{-\gamma}\simeq7.055507$ all zeros lie on the critical line, while for $y>y^*$ there are exactly two off the critical line and in the unit interval.
\label{lsthm3}
\end{theorem}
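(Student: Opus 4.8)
The plan is to combine an argument–principle count of \emph{all} zeros of $a_0(y,\cdot)$ in the critical strip with a Hardy–type count of the zeros it has on $\sigma=\tfrac12$, and to locate the transition at $y=y^*$ through the local behaviour of the function near $s=\tfrac12$. I would first record the structure $a_0(y,s)=G(s)+G(1-s)$ with $G(s):=\xi_1(2s)\,y^s$: since $\overline{G(\bar s)}=G(s)$, $a_0$ is real on the real axis and on $\sigma=\tfrac12$, symmetric under $s\mapsto 1-s$, with
\begin{equation}
a_0\!\left(y,\tfrac12+it\right)=2\,\Re G\!\left(\tfrac12+it\right)=2y^{1/2}\,\bigl|\xi_1(1+2it)\bigr|\,\cos\Theta(t),\qquad \Theta(t):=\arg\xi_1(1+2it)+t\log y,
\end{equation}
and $\xi_1$ has no zero on $\Re w=1$. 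Using the Laurent expansions of $\zeta$ and $\Gamma$ at $w=1$ one sees that the prospective simple poles of $\xi_1(2s)$ and $\xi_1(2-2s)$ at $s=\tfrac12$ cancel in $a_0$, that $a_0(y,s)$ has simple poles only at $s=0,1$, and that $a_0\bigl(y,\tfrac12\bigr)=y^{1/2}\log\!\bigl(y e^{\gamma}/(4\pi)\bigr)$, which vanishes precisely when $y=y^{*}=4\pi e^{-\gamma}$. On the real segment $0<\sigma<1$, $a_0(y,\sigma)$ is real, even about $\tfrac12$, and $\to-\infty$ as $\sigma\to0^{+}$ or $1^{-}$ (the pole of $\xi_1$ at $0$); hence for $y>y^{*}$, when $a_0(y,\tfrac12)>0$, the intermediate value theorem already produces a zero in $(0,\tfrac12)$ and, by the reflection $\sigma\mapsto1-\sigma$, one in $(\tfrac12,1)$ — at least two zeros in the unit interval.

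Next I would count all zeros in the strip. By the standard argument used for Taylor's function \cite{prt} the zeros of $a_0(y,\cdot)$ lie in $0\le\sigma\le1$, and applying the argument principle around $[0,1]\times[0,T]$ (indented at $s=0,1$), with the factorisation $a_0=\xi_1(2s)\,y^{s}\bigl(1+{\cal U}(s)\,y^{1-2s}\bigr)$, the known growth of $\xi_1$, the decay $|{\cal U}(s)\,y^{1-2s}|\to0$ as $\Re s\to+\infty$ for $y\ge1$, and Backlund–type bounds for $\arg\xi_1$ on horizontal segments, one gets
\begin{equation}
N(T)=\frac1\pi\Bigl(\Im\log\Gamma\bigl(\tfrac12+iT\bigr)-T\log\pi+T\log y\Bigr)+O(\log T)\sim\frac T\pi\log\frac{yT}{\pi}-\frac T\pi
\end{equation}
for the number of zeros with $0<\Im s<T$; the $\log y$ term is the extra winding contributed by $y^{s}$, and this is exactly the distribution function asserted in the conclusion.

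Then I would bound below the number $N_0(T)$ of zeros on the critical line, which are the solutions of $\Theta(t)\equiv\tfrac\pi2\pmod\pi$. Differentiating, $\Theta'(t)=2\,\Re\frac{\xi_1'}{\xi_1}(1+2it)+\log y$, and the digamma contribution $\tfrac12\Re\psi(\tfrac12+it)\sim\tfrac12\log t$ forces $\Theta'(t)\to+\infty$, so $\Theta$ is strictly increasing beyond an explicit $t_0$ independent of $T$; the finitely many oscillations on $(0,t_0)$ are controlled directly. The two facts that pin the low range and the constant are $\Theta(0^{+})=-\tfrac\pi2$ and $\Theta'(0^{+})=\log(y/y^{*})$ (again from the Laurent data at $w=1$). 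When $y\le y^{*}$ the curve $\Theta$ never re‑crosses $-\tfrac\pi2$ after leaving it, so counting crossings of the levels $\tfrac\pi2+k\pi$ yields $N_0(T)\ge N(T)$ and hence $N_0(T)=N(T)$ for every $T$: all zeros lie on $\sigma=\tfrac12$. When $y>y^{*}$ the curve starts by increasing through $-\tfrac\pi2$; the expansion $a_0(y,\tfrac12+\varepsilon)=y^{1/2}\log(y/y^{*})+O(\varepsilon^{2})$ (the $O(\varepsilon)$ term vanishes by the reflection symmetry) shows $s=\tfrac12$ is a double zero at $y=y^{*}$ which splits, for $y>y^{*}$, into the two real zeros found above, and, for $y<y^{*}$, into a conjugate pair on the line. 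Thus for $y>y^{*}$ exactly two zeros have left $\sigma=\tfrac12$ and they lie in $(0,1)$, completing the statement.

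The step I expect to be the main obstacle is the exclusion of off‑line zeros that are \emph{not} close to $s=\tfrac12$, i.e.\ proving that $N(T)-N_0(T)$ never exceeds the two accounted for — the sharp Taylor‑style bound. The conceptual reason it works is a maximum‑principle statement for the subharmonic function $\Re\bigl[\log{\cal U}(s)-(2s-1)\log y^{*}\bigr]$ on $\{\sigma>\tfrac12\}$, which vanishes on $\sigma=\tfrac12$ (since $|{\cal U}(\tfrac12+it)|=1$) and, using ${\cal U}(s)\sim\sqrt{\pi}\,s^{-1/2}$ at infinity, yields $|{\cal U}(s)|\le(y^{*})^{2\sigma-1}$ with equality only on the line; combined with the phase constraint imposed by $\Theta$ this is what prevents any further zero with $\Re s\ne\tfrac12$. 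Making this rigorous requires excising the simple pole of ${\cal U}$ at $s=1$ and dealing with the logarithmic growth defect so that the maximum principle genuinely applies on an unbounded region — that bookkeeping, or equivalently the Taylor‑type bounding of $N(T)-N_0(T)$ directly, is where the real work lies. A viable alternative is a continuity argument in $y$: verify the configuration at $y=1$, show a zero can leave $\sigma=\tfrac12$ only by colliding there, and that for $y\in[1,\infty)$ the sole such collision is the simple bifurcation at $y=y^{*}$ producing the two real zeros.
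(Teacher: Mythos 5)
The paper does not prove this statement; it is quoted verbatim as a theorem of Lagarias and Suzuki \cite{lagandsuz}, so there is no internal proof to compare yours against. Taken on its own terms, your proposal gets the local picture right: the cancellation of the poles of $\xi_1(2s)$ and $\xi_1(2-2s)$ at $s=\tfrac12$, the value $a_0(y,\tfrac12)=y^{1/2}\log\bigl(ye^{\gamma}/(4\pi)\bigr)$ identifying $y^*$, the limits $\Theta(0^+)=-\tfrac{\pi}{2}$ and $\Theta'(0^+)=\log(y/y^*)$, and the intermediate-value production of two real zeros in $(0,1)$ once $a_0(y,\tfrac12)>0$ all check out. (One side remark: your zero count carries $\tfrac{T}{\pi}\log y$ where the paper's equation (\ref{lseq4}) has $\tfrac{2}{\pi}T\log y$; your version is the one consistent with the density of sign changes of $\cos(\arg\xi_1(1+2it)+t\log y)$ on the line.)

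The genuine gap is the one you flag yourself, but it is worse than ``bookkeeping''. Everything in the theorem beyond ``at least two real zeros for $y>y^*$'' --- namely that \emph{all} zeros lie on the line for $y\le y^*$ and that \emph{exactly} two leave it for $y>y^*$ --- rests on excluding off-line zeros globally, and the mechanism you propose for this is false as stated. A zero of $a_0$ with $\sigma>\tfrac12$ occurs precisely where ${\cal U}(s)=-y^{2s-1}$, hence where $|{\cal U}(s)|=y^{2\sigma-1}$; your claimed bound $|{\cal U}(s)|\le (y^*)^{2\sigma-1}$ on $\sigma>\tfrac12$ would therefore forbid any such zero for every $y>y^*$, contradicting the very real zeros in $(\tfrac12,1)$ you have just constructed by the intermediate value theorem. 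Indeed on the real segment $\sigma\in(\tfrac12,1)$ the modulus $|{\cal U}(\sigma)|$ increases from $1$ to $\infty$, with $\log|{\cal U}(\tfrac12+\epsilon)|=2\epsilon\log y^*+O(\epsilon^2)$ exceeded from above, so the inequality fails on an entire real interval, not merely ``at the pole $s=1$ to be excised''. What is needed is an exact accounting: either the Hermite--Biehler/Herglotz-type argument Lagarias and Suzuki actually employ (a suitable quotient maps a half-plane into a half-plane, forcing all but the explicitly tracked zeros onto the line), or a Taylor-style proof that the horizontal argument variation of $1+{\cal U}(s)\,y^{1-2s}$ stays below $\pi$ for every $T$ with the constants pinned down exactly; eventual monotonicity of $\Theta$ only yields $N_0(T)=\tfrac1\pi\Theta(T)+O(1)$, and the $O(1)$ slack is exactly where a stray conjugate pair of off-line zeros could hide. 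As it stands the proposal establishes the easy half (the bifurcation at $y^*$ and the two real zeros) and leaves the characteristic content of the theorem unproved.
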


Let $N(f,T)$ count the number of zeros of the function $f(s)$ having $0\leq t\leq T$.  Then Lagarias and Suzuki give the relationship
between zero distributions  of $a_0(y,s)$ and $\xi_1(2 s)$:
\begin{equation}
N(a_0(y,s),T)=N(\xi_1(2 s), T)+\frac{2}{\pi} \log (y) T+O(1),
\label{lseq4}
\end{equation}
where, from Titchmarsh and Heath-Brown \cite{titchmarsh}, Theorem 9.4,
\begin{equation}
N(\xi_1(2 s), T)=\frac{T}{\pi} \log T-\frac{T}{\pi} (\log\pi+1)+O(\log T).
\label{lseq5}
\end{equation}
Note that, from equation (\ref{lseq4}),
\begin{equation}
N(a_0(1,s),T)=N(\xi_1(2 s), T)+O(1).
\label{lseq4a}
\end{equation}

\subsection{The Results of Ki}
Ki proves results relating to the function
\begin{equation}
f(y,s)=(s-1)s(2 s-1) \xi_1(2 s) y^s+s(1-s)(1-2 s)  \xi_1(2-2 s) y^{1-s},
\label{ki1}
\end{equation}
which is odd under $y\rightarrow 1-s$. Then
\begin{theorem}(Ki) All complex zeros of $f(s)$ are simple and lie on the critical line.
\label{thmk1}
\end{theorem}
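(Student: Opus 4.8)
The plan is to reduce $f(y,s)$ to the function $a_0(y,s)$ of Theorem \ref{lsthm3} and then to upgrade ``all zeros on the line'' to ``all zeros on the line and simple'' by exhibiting a Hermite--Biehler structure and applying a boundary maximum principle. The decisive first observation is elementary: the two polynomial prefactors coincide, $s(1-s)(1-2s)=s(s-1)(2s-1)$, so that
\begin{equation}
f(y,s)=s(s-1)(2s-1)\,a_0(y,s),\qquad a_0(y,s)=\xi_1(2s)\,y^{s}+\xi_1(2-2s)\,y^{1-s}.
\label{plan-red}
\end{equation}
The factor $s(s-1)(2s-1)$ is entire and odd about $s=\tfrac12$; it cancels the poles of $a_0$ at $s=0,1$ and adds a zero at $s=\tfrac12$, so every non-real zero of $f$ is a zero of $a_0$ of the same order, and conversely. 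By Theorem \ref{lsthm3} the zeros of $a_0$ all lie on $\sigma=\tfrac12$ (for $y>y^{*}$ the only off-line zeros are the two \emph{real} ones in $(0,1)$, which are not at issue). Hence everything reduces to showing that every non-real zero of $a_0$ is simple.

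Next I would write $s=\tfrac12+it$, so the critical line is the real $t$-axis, and set $E(t)=\xi_1(2s)\,y^{s}=\xi_1(1+2it)\,y^{1/2+it}$. Using $\xi_1(w)=\xi_1(1-w)$ and $\overline{\xi_1(w)}=\xi_1(\bar w)$ one checks that the companion $E^{*}(t):=\overline{E(\bar t)}$ equals $\xi_1(2-2s)\,y^{1-s}$, so $a_0(y,\tfrac12+it)=E(t)+E^{*}(t)$ is a \emph{real} entire function of $t$; moreover $E$ has no real zeros (that would make $1+2it$ a zero of $\xi_1$), and $|E|=|E^{*}|$ exactly on the real $t$-axis since $|\xi_1(1+2it)|=|\xi_1(1-2it)|$ and $|y^{s}|=|y^{1-s}|$ there. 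The analytic heart of the matter --- essentially the content of the proof of Theorem \ref{lsthm3} --- is the \emph{strict} inequality
\begin{equation}
\bigl|\xi_1(2-2s)\,y^{1-s}\bigr|<\bigl|\xi_1(2s)\,y^{s}\bigr|\qquad(\Re s>\tfrac12).
\label{plan-ineq}
\end{equation}
Far from the real $s$-axis this follows from Stirling's formula and standard bounds on $|\zeta|$, which drive the left/right ratio to $0$ as $\Im s\to\infty$; near $s=\tfrac12$ both $\xi_1(2s)$ and $\xi_1(2-2s)$ have simple poles with residues in ratio $-1$, so $E^{*}/E\to-1$, and a second-order expansion there --- in which $\gamma$ enters through $\zeta(w)=(w-1)^{-1}+\gamma+\cdots$ and the relevant digamma values, and $\log(4\pi)$ through the residual $\Gamma$- and $\pi$-factors --- shows that \eqref{plan-ineq} holds throughout $\{\Re s>\tfrac12\}$ exactly for $y\le y^{*}=4\pi e^{-\gamma}$, and on $\{\Re s>\tfrac12\}$ minus a short real interval near $\tfrac12$ when $y>y^{*}$.

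Granting \eqref{plan-ineq}, the finish is short. Let $a_0(y,s_0)=0$ with $s_0=\tfrac12+it_0$, $\Im s_0\ne0$. Since $\xi_1(2s)$ and $\xi_1(2-2s)$ have no zeros on $\sigma=\tfrac12$, both $E$ and $E^{*}$ are finite and non-zero at $s_0$, so $E(s_0)=-E^{*}(s_0)\ne0$, and a \emph{double} zero would give additionally $E'(s_0)=-(E^{*})'(s_0)$, whence $(E^{*}/E)'(s_0)=0$. But $u:=\log|E^{*}/E|$ is harmonic in a neighbourhood of $s_0$, vanishes at $s_0$, and is strictly negative on the side $\Re s>\tfrac12$ there, because \eqref{plan-ineq} holds near $s_0$ (its only possible failure being on a real interval, away from $s_0$). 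By the Hopf boundary lemma $\partial u/\partial\sigma(s_0)\ne0$, so $\nabla u(s_0)\ne0$ and therefore $(E^{*}/E)'(s_0)\ne0$ --- a contradiction; equivalently $E^{*}/E$ is, near $s_0$, a disk-valued map with a unimodular boundary value, and the boundary Schwarz--Pick lemma forbids a vanishing derivative. Hence no non-real zero of $a_0$ is multiple, which with \eqref{plan-red} is the theorem. (The same reasoning gives the strict interlacing of the zeros of $E+E^{*}$ with those of $(E-E^{*})/i$, i.e. the alternation property recorded earlier.)

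The hard part is the strict inequality \eqref{plan-ineq}: both its validity for $y\le y^{*}$ and the exact description of how it breaks down for $y>y^{*}$. Since $u=\log|E^{*}/E|$ is harmonic in $\{\Re s>\tfrac12\}$ apart from a $+\infty$ logarithmic singularity at $s=1$ (from $\zeta(2s-1)$) and $-\infty$ singularities at the zeros of $\xi_1(2s-1)$, excluding an interior region where $u>0$ is not a bare maximum-principle argument: it needs Phragm\'en--Lindel\"of-type control of $|\zeta|$ uniform in the strip, together with the precise cancellation at $s=\tfrac12$ that fixes $y^{*}=4\pi e^{-\gamma}$. This is the analytic engine already underlying Theorem \ref{lsthm3}; once it is in hand, the passage from ``on the line'' to ``on the line and simple'', via the Hopf/boundary-Schwarz step, is comparatively soft.
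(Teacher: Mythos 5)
The paper does not actually prove this statement: it is quoted from Ki \cite{ki} as a known result, so there is no internal proof to compare yours against. Judged on its own terms, your opening reduction is algebraically correct for the formula as printed: indeed $s(1-s)(1-2s)=s(s-1)(2s-1)$, so $f(y,s)=s(s-1)(2s-1)\,a_0(y,s)$ and the non-real zeros of $f$ and $a_0$ coincide with multiplicity. Be aware, though, that the paper also asserts that $f(1,s)$ reduces to Taylor's \emph{antisymmetric} function $\xi_1(s+1/2)-\xi_1(s-1/2)$, which is incompatible with the printed plus sign; if the intended $f$ carries a relative minus sign, your reduction target is the antisymmetric combination rather than $a_0$, and Theorem \ref{lsthm3} no longer applies directly.

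The genuine gap is the inequality you label as the key step, $|\xi_1(2-2s)\,y^{1-s}|<|\xi_1(2s)\,y^{s}|$ on $\Re s>\tfrac12$, on which the whole simplicity argument rests. As a global statement it is false: by the functional equation $\xi_1(2-2s)=\xi_1(2s-1)$ has a simple pole at $s=1$, where $\xi_1(2s)$ is finite and non-zero, so the left-hand side dominates in a full two-dimensional neighbourhood of $s=1$ for every $y$ (not merely on "a short real interval near $\tfrac12$"). Nor is it "essentially the content of the proof of Theorem \ref{lsthm3}": Lagarias and Suzuki locate the zeros by an argument-principle count, not by a Hermite--Biehler domination, so you cannot import the inequality from there. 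What your Hopf-lemma finish actually needs is only the one-sided local statement near each zero $s_0=\tfrac12+it_0$; writing $\log|E^{*}/E|=\log|{\cal U}(s)|+(1-2\sigma)\log y$ and using ${\cal U}(1/2+it)=e^{-2i\theta_1(t)}$ gives $\partial_\sigma\log|E^{*}/E|\,\big|_{\sigma=1/2}=-2\bigl(\theta_1'(t_0)+\log y\bigr)$, so the local statement is exactly the monotonicity $\theta'(t)>0$ of Ki's Lemma quoted in Section 3.3 (valid for $t\ge7$) --- and once that is granted, the derivative is non-zero directly and the Hopf/boundary-Schwarz step is superfluous. In other words, the "comparatively soft" finish defers the entire content of the theorem to an unproved (and, globally, false) inequality. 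The route actually used by Taylor, Lagarias--Suzuki and Ki is different: the monotone phase $\theta_1(t)+t\log y$ produces at least $N(T)$ simple zeros on the line by the intermediate value theorem, the argument principle shows the strip contains at most $N(T)$ zeros, and equality of the two counts forces every zero onto the line with multiplicity one. You would need either to supply that counting argument, or to prove the local phase monotonicity (including for small $t$, where the exceptional real zeros for $y>y^{*}$ arise), before this becomes a proof.
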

For $y=1$ this reduces to Taylor's result Theorem \ref{prtthm}.

Another valuable set of results relate to  the following function:
\begin{equation}
\theta (t)=\arg[(2it)(1/2+i t)(-1/2+ i t) \xi_1(1+2 i t)].
\label{ki2}
\end{equation}
Then:
\begin{lemma}(Ki) The function $\theta (t)$ has the properties: (1) $\theta(0)=\pi$, (2) $\theta(t)>\pi/2$ for $t>0$,
(3) $\theta(t)$ is a convex function in $(0,7)$ and (4) $\theta (t)$ increases in $[7,\infty)$, with $\theta '(t)\sim \log t$.  
\end{lemma}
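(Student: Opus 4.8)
\emph{Proof sketch.} The plan is to strip the completed combination down to its elementary constituents. Writing $s=1+2it$, one has $(2it)(1/2+it)(-1/2+it)=\tfrac14 s(s-1)(s-2)$, and since $\xi(s)=\tfrac12 s(s-1)\xi_1(s)$ this already shows the bracketed function equals $(it-\tfrac12)\,\xi(1+2it)$. Equivalently, peeling off $\xi_1(s)=\pi^{-s/2}\Gamma(s/2)\zeta(s)$ and grouping the pole of $\zeta$ at $s=1$ with the factor $2it=s-1$, one obtains
\[ \theta(t)=\pi+\arg\!\big[2it\,\zeta(1+2it)\big]-t\log\pi+\arg\Gamma(\tfrac12+it), \]
with the branches fixed by continuity so that the right-hand side equals $\pi$ at $t=0$: there $2it\,\zeta(1+2it)\to\operatorname{Res}_{s=1}\zeta=1$, $\arg[(\tfrac12+it)(-\tfrac12+it)]=\arg[-(t^2+\tfrac14)]=\pi$, and the remaining two summands vanish. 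This gives property (1) at once, and displays $\theta$ as a sum of an explicit $\Gamma$-contribution, the linear term $-t\log\pi$, and an arithmetic contribution built from $\zeta$ on the line $\Re s=1$, where $\zeta$ is nonvanishing.

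For properties (3) and (4) I would differentiate: for $t>0$,
\[ \theta'(t)=2\,\Re\tfrac{\zeta'}{\zeta}(1+2it)+\Re\psi(\tfrac12+it)-\log\pi,\qquad \theta''(t)=-4\,\Im\big(\tfrac{\zeta'}{\zeta}\big)'(1+2it)-\Im\psi'(\tfrac12+it). \]
The $\Gamma$-pieces are completely explicit: $-\Im\psi'(\tfrac12+it)=\sum_{n\ge0}\tfrac{2(n+1/2)t}{((n+1/2)^2+t^2)^2}>0$ for $t>0$, while $\Re\psi(\tfrac12+it)=\log t+O(t^{-2})$; thus $\Gamma$ supplies a strictly convex summand whose derivative grows like $\log t$. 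For (4), on $t\ge 7$ I would bound the arithmetic terms by classical estimates near $\sigma=1$, namely $\tfrac{\zeta'}{\zeta}(1+2it)=o(\log t)$ and a corresponding $o(\log t)$-bound for $\big(\tfrac{\zeta'}{\zeta}\big)'(1+2it)$ from the Vinogradov--Korobov zero-free region, so that the $\Gamma$-term dominates, forcing $\theta'(t)>0$ and $\theta'(t)\sim\log t$; the single value $\theta'(7)>0$ is then verified by a rigorous numerical evaluation (interval arithmetic on $\Gamma$, $\psi$, $\zeta$ and their derivatives). For (3), on the bounded interval $(0,7)$ the $\psi'$-term is positive and $\big(\tfrac{\zeta'}{\zeta}\big)'(1+2it)$ ranges over a compact set, so $\theta''>0$ reduces to a finite rigorous estimate of this last quantity on $0<t<7$, checking that the sum stays positive.

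The main obstacle is property (2). Although $\theta(0)=\pi$, one has $\theta'(0^+)=2\gamma+\psi(\tfrac12)-\log\pi<0$, so $\theta$ genuinely descends near $0$ and the bound $\theta>\pi/2$ is \emph{not} a termwise triviality: one must show that this descent never reaches $\pi/2$. My plan is to combine the convexity on $(0,7)$ from (3) with the monotonicity on $[7,\infty)$ from (4) to conclude that $\theta$ attains a \emph{single} minimum $t_\ast\in(0,7)$ on all of $(0,\infty)$; then (2) collapses to the one inequality $\theta(t_\ast)>\pi/2$, verified numerically with rigorous error control. Equivalently, writing $\xi(1+2it)=X(t)+iY(t)$, the equation $\theta(t)=\pi/2$ forces $X(t)=-2t\,Y(t)$ together with $Y(t)<0$, which is excluded for $0<t<7$ by interval arithmetic and for $t\ge 7$ by the monotonicity already in hand. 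The delicate point — where crude bounds are insufficient — is making ``$\theta$ has a unique minimum'' fully rigorous on the moderate range $0<t<7$: the arithmetic term $\Re\tfrac{\zeta'}{\zeta}(1+2it)$ is not sign-definite, so its interplay with the $\Gamma$-contribution must be tracked quantitatively rather than merely bounded away.
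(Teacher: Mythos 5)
The paper does not prove this lemma: it is quoted verbatim from Ki's paper \cite{ki} as an imported result, so there is no in-paper argument to measure you against. Judged on its own terms, your decomposition is correctly set up --- the identification of the bracketed function with $(it-\tfrac12)\,\xi(1+2it)$, the formulas for $\theta'$ and $\theta''$, the value $\theta'(0^+)=\gamma-\log 4\pi<0$, and the reduction of (2) to a single minimum once (3) and (4) are in hand are all sound. But there is a genuine gap in your treatment of (4). You propose to dominate $2\Re\frac{\zeta'}{\zeta}(1+2it)$ by $\Re\psi(\tfrac12+it)-\log\pi\sim\log t$ using Vinogradov--Korobov type bounds; made explicit, such bounds only win for $t\ge T_0$ with $T_0$ far beyond $7$, and on the intermediate range $[7,T_0]$ you have neither convexity nor monotonicity, so verifying the single value $\theta'(7)>0$ proves nothing there. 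As written, the argument does not establish that $\theta$ increases on all of $[7,\infty)$.

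The repair is to \emph{not} split $\Gamma$ from $\zeta$. Keep the completed function: since $\theta(t)=\arg\xi(1+2it)+\arg(it-\tfrac12)$ and the Hadamard product gives $\frac{\xi'}{\xi}(s)=\sum_\rho\frac{1}{s-\rho}$ (zeros suitably paired), one has unconditionally
\begin{equation*}
\theta'(t)=2\sum_\rho\frac{1-\beta}{(1-\beta)^2+(2t-\gamma)^2}\;-\;\frac{2}{1+4t^2},
\end{equation*}
where every term of the sum is nonnegative because $0<\beta<1$. The subtracted term is at most $2/197$ for $t\ge7$, while the sum becomes bounded below by an absolute positive constant precisely once $2t$ reaches the ordinate of the first zero, $\gamma_1\approx 14.13$ --- which is exactly where the threshold $7$ comes from --- and the standard estimate $\sum_\rho(1+(T-\gamma)^2)^{-1}\asymp\log T$ then yields $\theta'(t)\sim\log t$ with no upper bound on $\zeta'/\zeta$ needed at all. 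A secondary, smaller gap: near $t=0^+$ both terms in your $\theta''$ vanish linearly, so ``positive $\psi'$-term versus compactly bounded $\zeta$-term'' cannot certify $\theta''>0$ on a neighbourhood of $0$; you must compare the linear Taylor coefficients (equivalently, work with the zero-sum representation of $\theta''$ as well), after which the finite interval-arithmetic check on the rest of $(0,7)$ is legitimate.
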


\subsection{The Functions ${\cal T}_+(s)$ and ${\cal T}_-(s)$}

We continue with two discussions related to functions for which it is known that all non-trivial zeros are first-order and located
on the critical line \cite{ki,lagandsuz,mcp13}.

The first  function is defined as:
\begin{equation}
{\cal T}_+(s)=\frac{1}{4} [\xi_1(2 s)+\xi_1(2 s-1)].
\label{T+0}
\end{equation}
It has a pole of order unity at $s=0$:
\begin{equation}
{\cal T}_+(s)\sim-\frac{1}{8 s}+\frac{1}{24}(3\gamma+\pi-3\log(4\pi))+O(s).
\label{T+1}
\end{equation}
It tends to a constant at $s=1/2$:
\begin{equation}
{\cal T}_+(s)\sim \frac{1}{4} (\gamma-\log (4\pi))+O((s-1/2)^2).
\label{T+2}
\end{equation}
It has a pole of order unity at $s=1$:
\begin{equation}
{\cal T}_+(s)\sim \frac{1}{8 (s-1)}+\frac{1}{24}(3\gamma+\pi-3\log(4\pi))+O(s-1).
\label{T+3}
\end{equation}
It is even under $s\rightarrow 1-s$.

The function ${\cal T}_+(s)$ takes the following form on the critical line:
\begin{equation}
{\cal T}_+(1/2+i t)=2 |\xi_1(1+2 i t)| \cos[\arg(\xi_1(1+2 i t)|],
\label{T+CL}
\end{equation}
and thus its zeros correspond to $\arg(\xi_1(1+2 i t)=(n+1/2)\pi$ for  any integer $n$.

The author has compiled a list of the first 1517 zeros of ${\cal T}_+(s)$, the last of which is at $t=\Re(s)\simeq 999.912$. (A list of the first 1517 zeros of ${\cal T}_+(s)$ is available via the electronic supplementary material to this paper.)

The second  function is defined as:
\begin{equation}
{\cal T}_-(s)=\frac{1}{4} [\xi_1(2 s)-\xi_1(2 s-1)].
\label{T-0}
\end{equation}
This function is odd under $s\rightarrow 1-s$. It has poles at $s=0$, $s=1/2$ and $s=1$, and zeros on the real line at $s=3.91231$ and $s=-2.91231$. The function ${\cal T}_-(s)$ takes the following form on the critical line:
\begin{equation}
{\cal T}_-(1/2+i t)=2 i  |\xi_1(1+2 i t)| \sin[\arg(\xi_1(1+2 i t)|],
\label{T-CL}
\end{equation}
and thus its zeros correspond to $\arg(\xi_1(1+2 i t)=n \pi$ for  any integer $n$.

Once again, a list of the first 1517 zeros of ${\cal T}_-(s)$ is available via the electronic supplementary material to this paper.

Let
\begin{equation}
\theta_1 (t)=\arg[ \xi_1(1+2 i t)].
\label{thCL1}
\end{equation}
Then, from (\ref{ki2}), in $t>0$,
\begin{equation}
\theta (t)=-\frac{\pi}{2}+\theta_1 (t),
\label{thCL2}
\end{equation}
and
\begin{equation}
\theta' (t)=\theta_1' (t).
\label{thCL3}
\end{equation}

In so far as the properties of ${\cal T}_+(s)$ and ${\cal T}_-(s)$ are concerned, these are collected in the following Theorem:
\begin{theorem} The functions ${\cal T}_+(s)$ and ${\cal T}_-(s)$ have all their complex zeros on the critical line, and they occur in interlaced fashion, with a zero of the former lying between zeros of the latter, and vice versa. All zeros of each are simple, and the
distribution functions of each satisfy:
\begin{equation}
\label{dfnTpm}
N({\cal T}_+(s), T)=N({\cal T}_-(s), T)=N(\xi_1(2 s), T)=\frac{T}{\pi} \log T-\frac{T}{\pi} (\log\pi+1)+O(\log T).
\end{equation}
\label{Tpmprops}
\end{theorem}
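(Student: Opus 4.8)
The plan is to obtain the theorem by combining the three structural results quoted above with the critical-line representations (\ref{T+CL})--(\ref{T-CL}) and the Lemma of Ki on $\theta(t)$. First I would identify $4{\cal T}_+(s)$ and $4{\cal T}_-(s)$ with functions already treated in the literature. Using $\xi_1(w)=\xi_1(1-w)$ one has $4{\cal T}_+(s)=\xi_1(2s)+\xi_1(2s-1)=\xi_1(2s)+\xi_1(2-2s)=a_0(1,s)$ in the notation of Theorem \ref{lsthm3}; since $y=1$ lies in the admissible range $1\le y\le y^*=4\pi e^{-\gamma}$, that theorem gives that every complex zero of ${\cal T}_+$ lies on $\sigma=1/2$. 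Similarly $4{\cal T}_-(s)=\xi_1(2s)-\xi_1(2s-1)$ equals $f(2s-\tfrac12)$ for Taylor's function $f$ of Theorem \ref{prtthm} (equivalently, Ki's $f(1,s)=4(s-1)s(2s-1){\cal T}_-(s)$), so every complex zero of ${\cal T}_-$ lies on $\sigma=1/2$, and by Theorem \ref{thmk1} every such zero is simple, the polynomial prefactor $(s-1)s(2s-1)$ contributing only the non-complex points $s=0,\tfrac12,1$. This disposes of ``all complex zeros on the critical line'' and of simplicity for ${\cal T}_-$.

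Next I would establish interlacing, and simplicity of ${\cal T}_+$, from the behaviour on the line itself. By (\ref{T+CL})--(\ref{T-CL}), on $\sigma=1/2$ the functions ${\cal T}_\pm$ equal the common positive factor $2|\xi_1(1+2it)|$ (nonzero for $t>0$ since $\zeta(1+2it)\ne0$) times $\cos\theta_1(t)$ and $i\sin\theta_1(t)$ respectively, so for $t>0$ the zeros of ${\cal T}_+$ are the solutions of $\theta_1(t)=(n+\tfrac12)\pi$ and those of ${\cal T}_-$ the solutions of $\theta_1(t)=n\pi$. By (\ref{thCL2})--(\ref{thCL3}) and Ki's Lemma, $\theta_1'=\theta'$ is positive on $[7,\infty)$ with $\theta_1'(t)\sim\log t$, while on $(0,7)$ the convexity of $\theta$ makes $\theta_1$ unimodal, with at most one interior turning point; at such a point $\theta_1$ takes a value strictly between $\pi$ and $3\pi/2$ (it exceeds $\pi$ by Ki's Lemma; it is not a multiple of $\pi$, for otherwise ${\cal T}_-$ would have a non-simple zero, contradicting Theorem \ref{thmk1}; and it is not $3\pi/2=\theta_1(0^+)$, since a convex function decreasing from $3\pi/2$ has a strictly smaller minimum while one nondecreasing from $3\pi/2$ has no interior turning point). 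Hence $\theta_1'(t)\ne0$ at every $t>0$ with $\cos\theta_1(t)=0$ or $\sin\theta_1(t)=0$, so $\tfrac{d}{dt}\cos\theta_1=-\theta_1'\sin\theta_1$ and $\tfrac{d}{dt}\sin\theta_1=\theta_1'\cos\theta_1$ are nonzero there: every critical-line zero of ${\cal T}_+$ (and of ${\cal T}_-$) is simple. Moreover $\theta_1$ is monotone on $(0,t_0]$ and on $[t_0,\infty)$ ($t_0$ the turning point, or $t_0=0$), and the levels $\pi$ and $3\pi/2$ are not attained with zero derivative, so the solutions of $\theta_1=(n+\tfrac12)\pi$ and $\theta_1=n\pi$ strictly alternate throughout $t>0$: a zero of ${\cal T}_+$ between consecutive zeros of ${\cal T}_-$, and vice versa.

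For the distribution function I would invoke the count already supplied by Lagarias and Suzuki: since $4{\cal T}_+(s)=a_0(1,s)$, equations (\ref{lseq4a}) and (\ref{lseq5}) give $N({\cal T}_+,T)=N(a_0(1,s),T)=N(\xi_1(2s),T)+O(1)=\tfrac{T}{\pi}\log T-\tfrac{T}{\pi}(\log\pi+1)+O(\log T)$, and the interlacing just proved forces $|N({\cal T}_+,T)-N({\cal T}_-,T)|\le1$, so $N({\cal T}_-,T)$ obeys the same asymptotic. Independently, one may argue directly: all complex zeros lie on $\sigma=1/2$ and $\theta_1$ is eventually monotone, so $N({\cal T}_\pm,T)=\tfrac1\pi\theta_1(T)+O(1)$, and $\theta_1(T)=\arg\xi_1(1+2iT)=-T\log\pi+\arg\Gamma(\tfrac12+iT)+\arg\zeta(1+2iT)=T\log T-T(\log\pi+1)+O(\log T)$ by Stirling together with $\arg\zeta(1+2iT)=O(\log T)$.

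The main obstacle is the second step: extracting from Ki's Lemma the precise sign information on $\theta_1'$ at the zeros of $\cos\theta_1$ and $\sin\theta_1$. The case of a turning point of $\theta_1$ landing exactly on a multiple of $\pi$ is excluded by Ki's simplicity theorem for ${\cal T}_-$, but the case landing on an odd multiple of $\pi/2$, which would manifest as a double zero of ${\cal T}_+$, has to be killed by the convexity-plus-boundary-value argument above; one should also verify directly, from the explicit formulae or numerically, the behaviour of the finitely many zeros with $0<t<7$, where $\theta_1$ need not be monotone.
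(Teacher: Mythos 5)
Your proposal is correct and follows essentially the same route as the paper, which simply cites the results of Taylor, Lagarias--Suzuki and Ki together with equation (\ref{thCL3}) and the numerically observed locations of the first zeros; you assemble exactly those ingredients (the identification $4{\cal T}_+=a_0(1,\cdot)$, the reduction of $4{\cal T}_-$ to Taylor's/Ki's function, the monotonicity of $\theta_1$ from Ki's Lemma, and (\ref{lseq4a})--(\ref{lseq5}) for the counting function). Your treatment is in fact more complete than the paper's one-paragraph sketch, notably in replacing the numerical check on $0<t<7$ by the convexity argument excluding a turning point of $\theta_1$ at an odd multiple of $\pi/2$.
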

\begin{proof}
These statements follow from the results of Taylor, Suzuki and Lagarias, and Ki, as well as from equation (\ref{thCL3}).
The first complex zero of ${\cal T}_+(s)$ occurs for $t\simeq 6.97468$, while that for ${\cal T}_-(s)$ occurs for $t\simeq 7.66111$, after which the interlacing property commences. We emphasise that in equation (\ref{dfnTpm}) the zeros counted all lie on the critical line for the first two functions, but for the third the zeros referred to lie in $0<\sigma<1/2$.
\end{proof}

It is interesting to investigate whether a positional relationship exists between the zeros of either  ${\cal T}_+(s)$  or ${\cal T}_-(s)$  and those of $\xi_1(2 s-1/2)$, which under the Riemann hypothesis all lie on the critical line.
We have investigated firstly whether zeros of $\xi_1(2 s-1/2)$ all lie after those of ${\cal T}_+(s)$. In fact, of the first 1500 zeros of the latter, this property does not hold in 232 or 15.5\% of  cases. We can also ask whether numbered  zeros of $\xi_1(2 s-1/2)$  lie between successive zeros of ${\cal T}_-(s)$. This property fails only in four cases: 921, 995, 1307 and 1495. Using translations we can make the property hold for all 1500 zeros: translations by $s_0=i t_0$ achieve this for $t_0$ in the range -0.080 to -0.036. This same translation along the critical line leads to a variation along it of the shifted ratio ${\cal T}_+(s)/{\cal T}_-(s)$ of the form
\begin{equation}
\frac{{\cal T}_+(1/2+i (t-t_0))}{{\cal T}_-(1/2+i (t-t_0))}=-i \cot [\arg \xi_1(1+2 i (t-t_0))].
\label{trans7}
\end{equation}
Hence, each point on the critical line can be made a zero of either the numerator or the denominator function in (\ref{trans7}).  In addition,
each zero of $\xi_1(2 s-1/2)$ on the critical line can be made to coincide with a zero of ${\cal T}_-(s-s_0)$ or of ${\cal T}_+(s-s_0)$, or to lie between two such.
\subsection{Properties of  ${\cal V} (s)$ and ${\cal W}(s)$}
From the functions ${\cal T}_+(s)$ and ${\cal T}_-(s)$ we construct two further functions:
\begin{equation}
{\cal V}(s)=\frac{{\cal T}_+(s)}{{\cal T}_-(s)}=\frac{1+{\cal U}(s)}{1-{\cal U}(s)},
\label{eq-srh1}
\end{equation}

Note that
\begin{equation}
{\cal V}(s)=\frac{1+\sqrt{\pi} \Gamma (s-1/2)\zeta (2 s-1)/(\Gamma (s)\zeta (2 s))}{1-\sqrt{\pi} \Gamma (s-1/2)\zeta (2 s-1)/(\Gamma (s)\zeta (2 s))} ,
\label{eq-srh3}
\end{equation}
leading to the first-order estimate for $|{\cal V}(s)|$ for $1<<|\sigma|<<t$
\begin{equation}
|{\cal V}(s)|\sim 1+\sqrt{\frac{2}{t}}.
\label{eq-srh4}
\end{equation}
The corresponding argument estimate in  $1<<\sigma<<t$ is
\begin{equation}
\arg [{\cal V}(s)] \sim -\sqrt{\frac{2 \pi}{t}}.
\label{eq-srh5}
\end{equation}
For $  {\cal U}(s)$, we have
 \begin{equation}
  {\cal U}(s)=
  \sqrt{\frac{\pi}{s}} (1+\frac{3}{8 s}+\ldots) (1+\frac{1}{4^s}+\frac{2}{9^s}+\ldots).
 \label{udef3}
 \end{equation}
 From this, if $t>>1$ and $\sigma>2$ say, $|{\cal U}(s)|\sim \sqrt{\frac{2}{t}}$ and $\arg {\cal U}(s)$ lies in the fourth quadrant.
 The leading terms in the expansion for ${\cal V}(s)$ are
 \begin{equation}
  {\cal V}(s) \sim 1+\sqrt{\frac{2}{t}}(1-i \sqrt{\pi})+O(1/t),
 \label{asymV}
 \end{equation}
 which places its  argument in the fourth quadrant, close to the real axis. For ${\cal W}(s)$ we have
 \begin{equation}
  {\cal W}(s) \sim -i+\sqrt{\frac{2}{t}}(1-i \sqrt{\pi})+O(1/t),
 \label{asymW}
 \end{equation}
 which places its  argument in the fourth quadrant, close to the boundary with the third quadrant .

We know that ${\cal V}(s)$ has all its non-trivial zeros and poles interlaced along the critical line, while the Riemann hypothesis is that
${\cal U}(s)$ has its zeros on $\Re (s)=3/4$ and its poles on $\Re (s)=1/4$. From (\ref{eq-srh1}), zeros of ${\cal U}(s)$ correspond 
to ${\cal V}(s)=1$ and poles to ${\cal V}(s)=-1$. Both then must lie on contours  of constant modulus $|{\cal V}(s)|=1$, which correspond to ${\cal U}(s)$ being pure imaginary: $\Re [{\cal U}(s)]=0$.

An investigation has been carried out into the relationship between the contours of constant modulus $|{\cal V}(s)|=1$ and the location of the zeros and poles of ${\cal U}(s)$, for the first 1500 zeros of ${\cal T}_+(s)$ and ${\cal T}_-(s)$. A convenient way of doing this in the symbolic/numerical/graphical package Mathematica is to use the option RegionPlot, and in this case to construct the regions in which $|{\cal V}(s)|\leq 1$,  or equivalently in which $\Re [{\cal U}(s)]<0$. These can be combined with contour plots of  $|{\cal V}(s)|$,
with contours appropriately chosen to highlight the location and behaviour around zeros of the derivative function ${\cal U}'(s)$, evaluated by numerical differentiation.

The results of this (rather labour intensive) investigation are quite suggestive. In each of the 1500 cases, a zero of ${\cal T}_+(s)$
on the critical line sits at the centre of a simply-connected region, whose boundary fully encloses the region $|{\cal V}(s)|\leq 1$.
The region $|{\cal V}(s)|> 1$ is multiply connected, in keeping with the estimate in equation (\ref{eq-srh3}) for $\sigma$ not too close to the critical line. Two examples are given in Figs. \ref{fig-srh1} and  \ref{fig-srh2}. The first example (zero 518 of ${\cal T}_+(s)$)  shows what may be described as  typical behaviour, while the second (zero 1495)  corresponds to one of the four exceptions mentioned in the previous section.

 \begin{figure}[tbh]
\includegraphics[width=7cm]{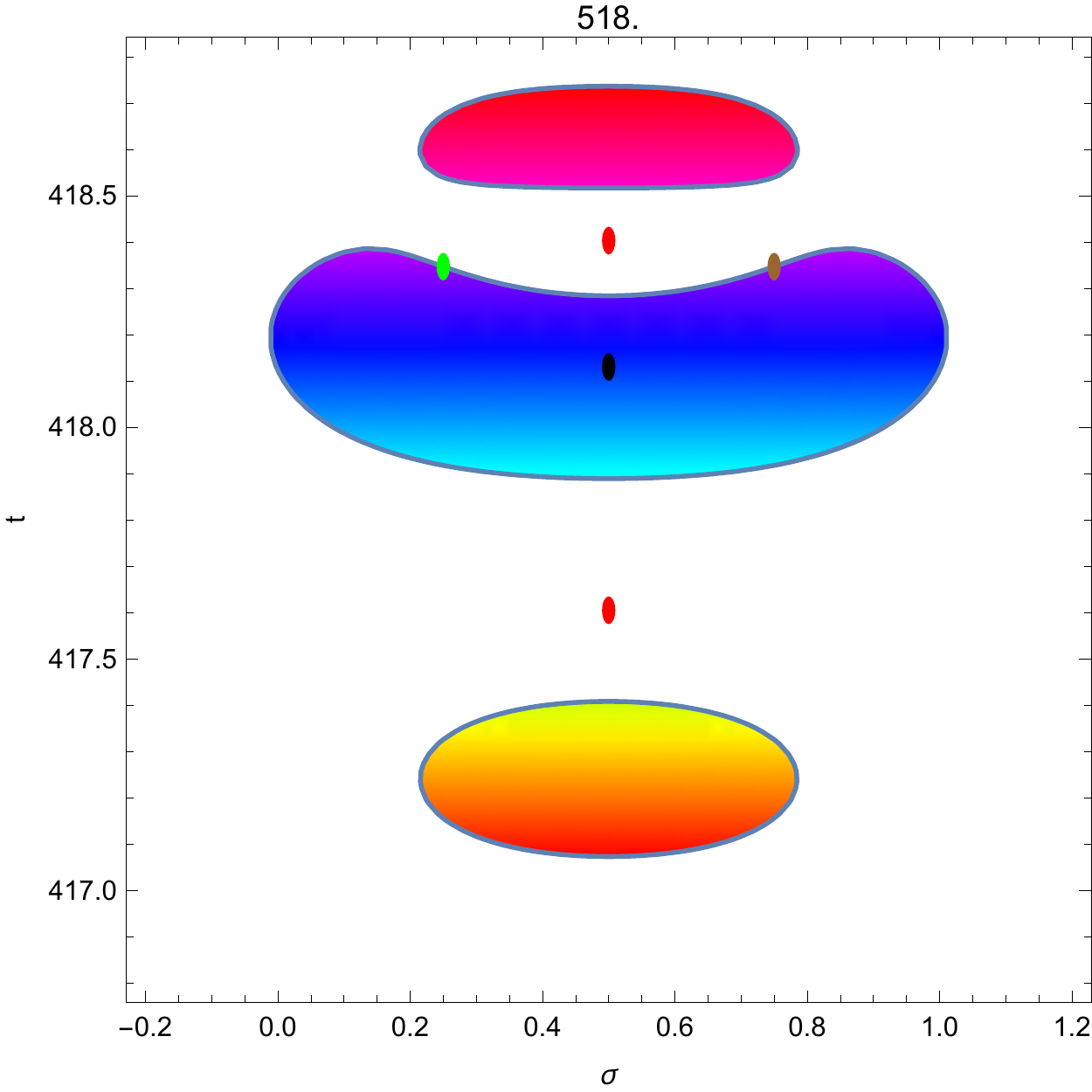}~\includegraphics[width=7cm]{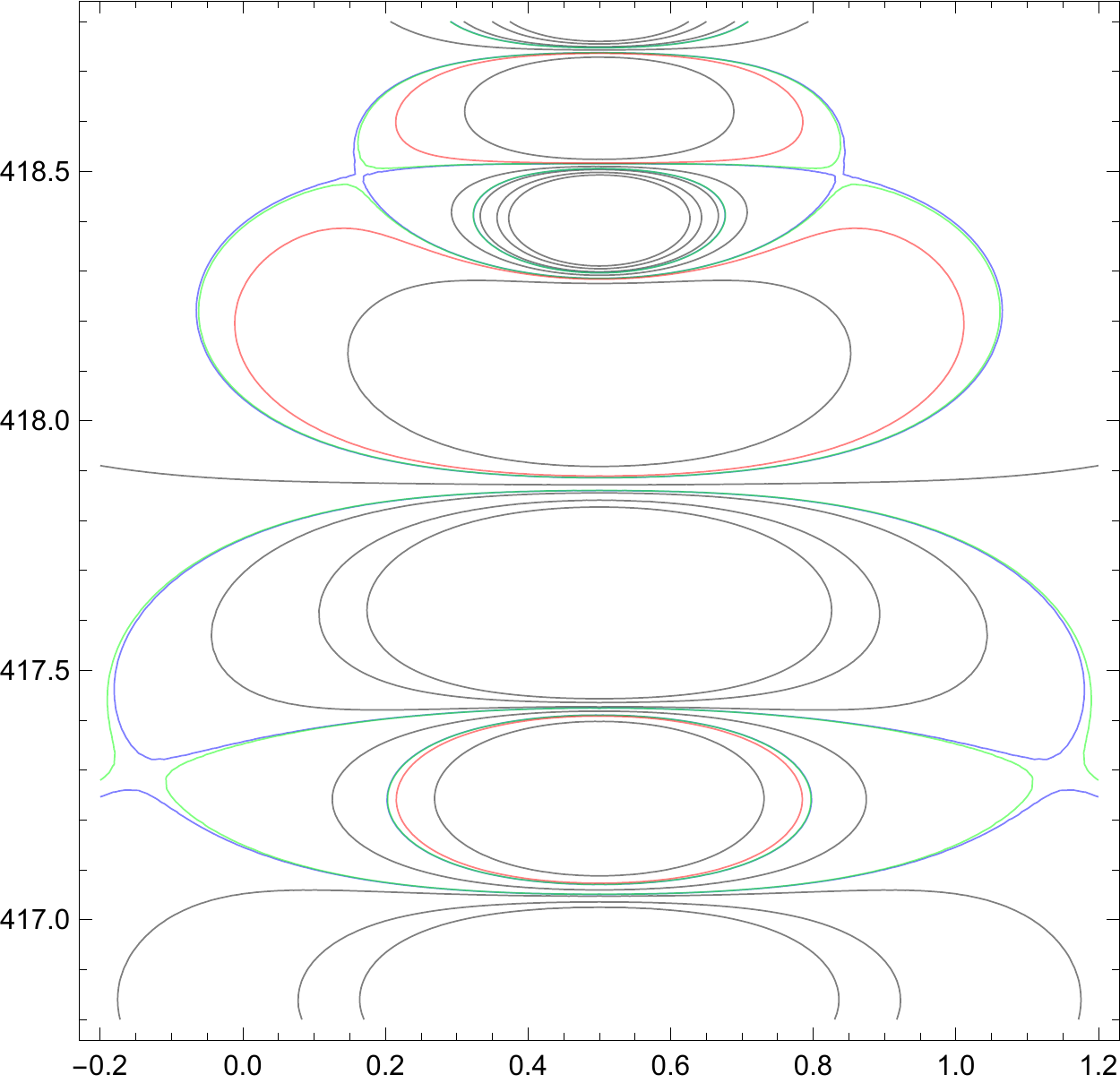}
\caption{At left, three successive regions (coloured)  in which $|{\cal V}(\sigma+i t)|\leq 1$ are shown. Black dots represent zeros, red dots poles, while green and brown dots denote zeros of $\xi_1( 2s)$ and $\xi_1(2 s-1)$. At right, corresponding contours of constant modulus, with red denoting unit modulus, while green and blue contours correspond to moduli just above and below the values corresponding to the derivative zeros of ${\cal V}(\sigma+i t)$. }
\label{fig-srh1}
\end{figure}

 \begin{figure}[tbh]
\includegraphics[width=7cm]{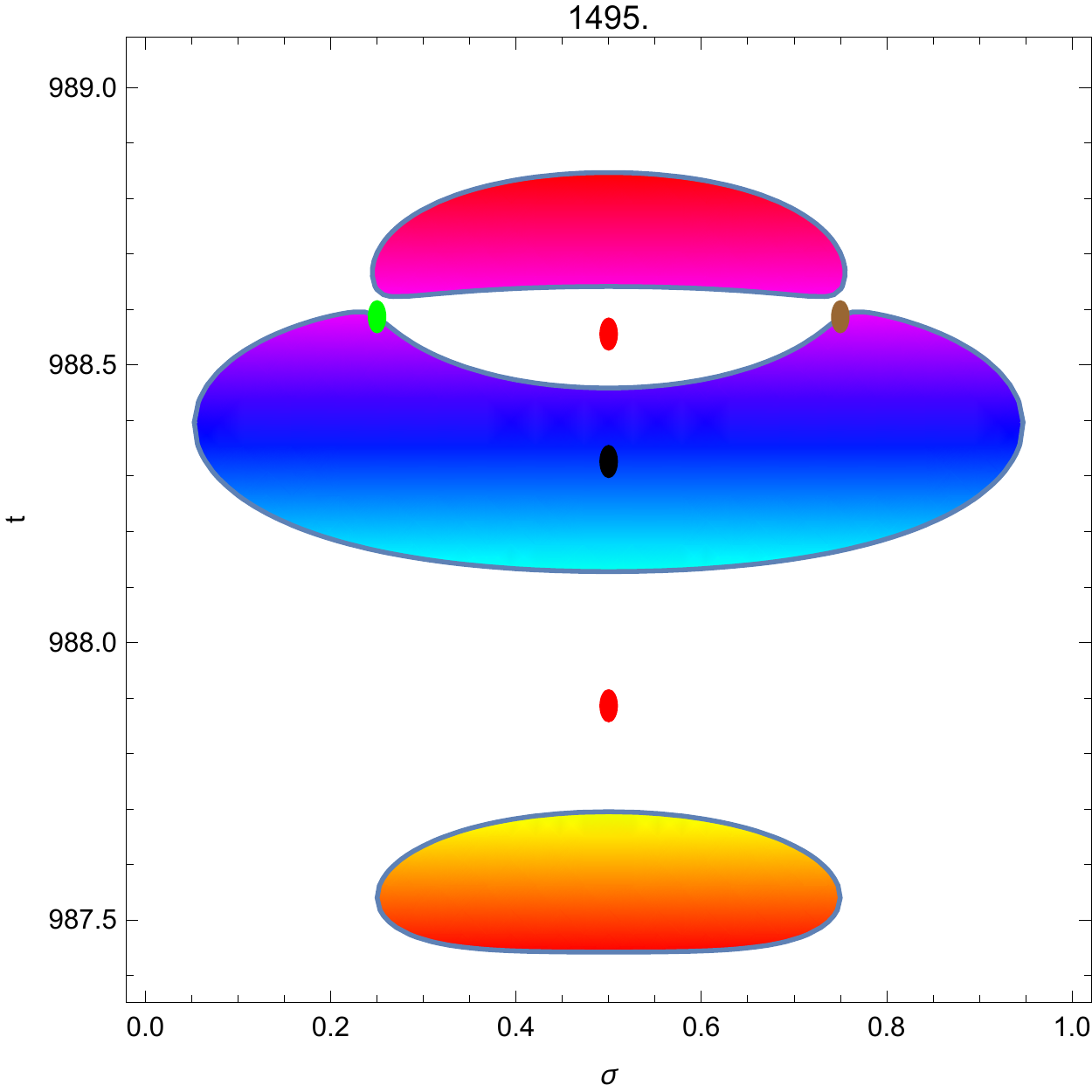}~\includegraphics[width=7cm]{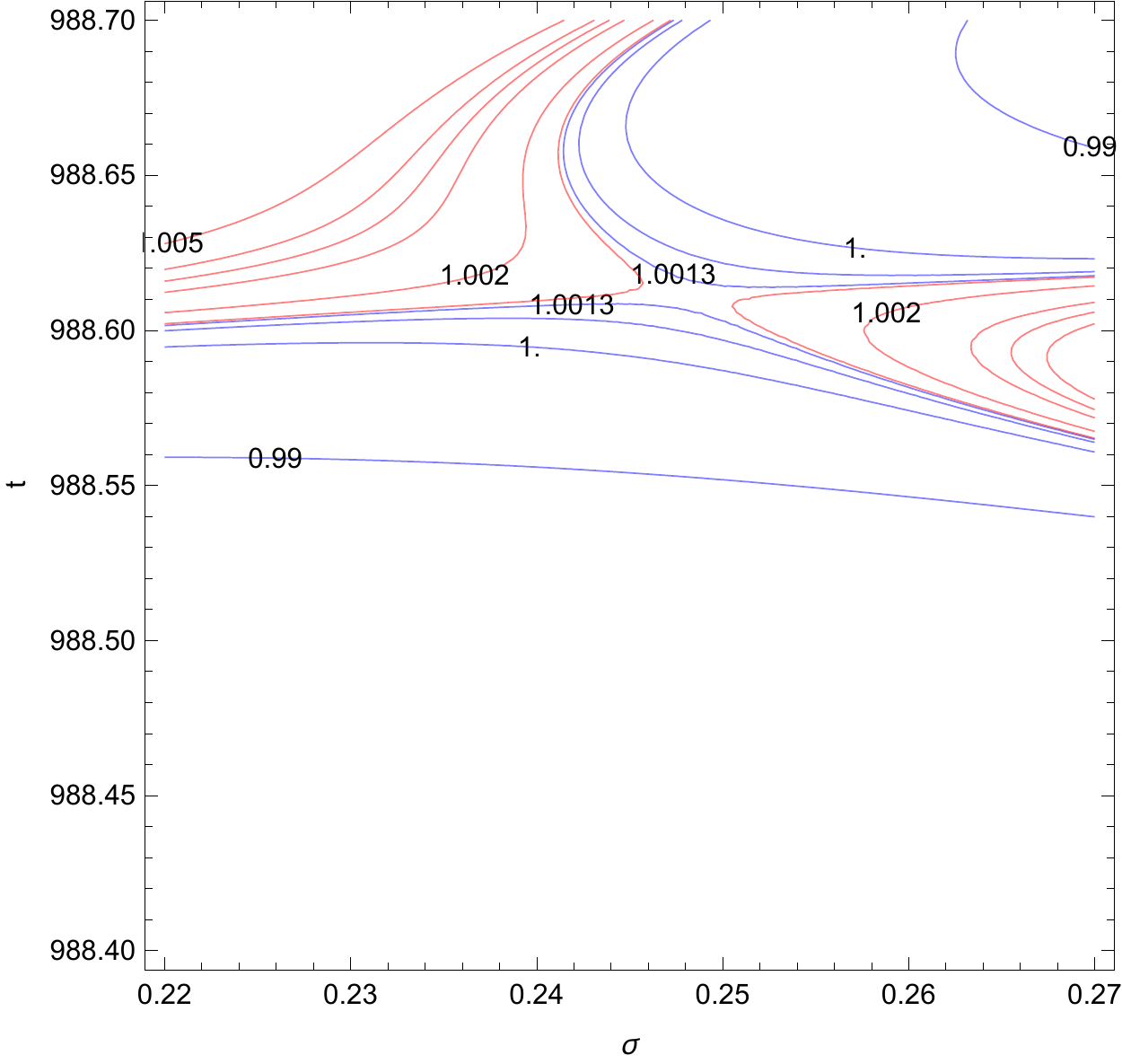}
\includegraphics[width=7cm]{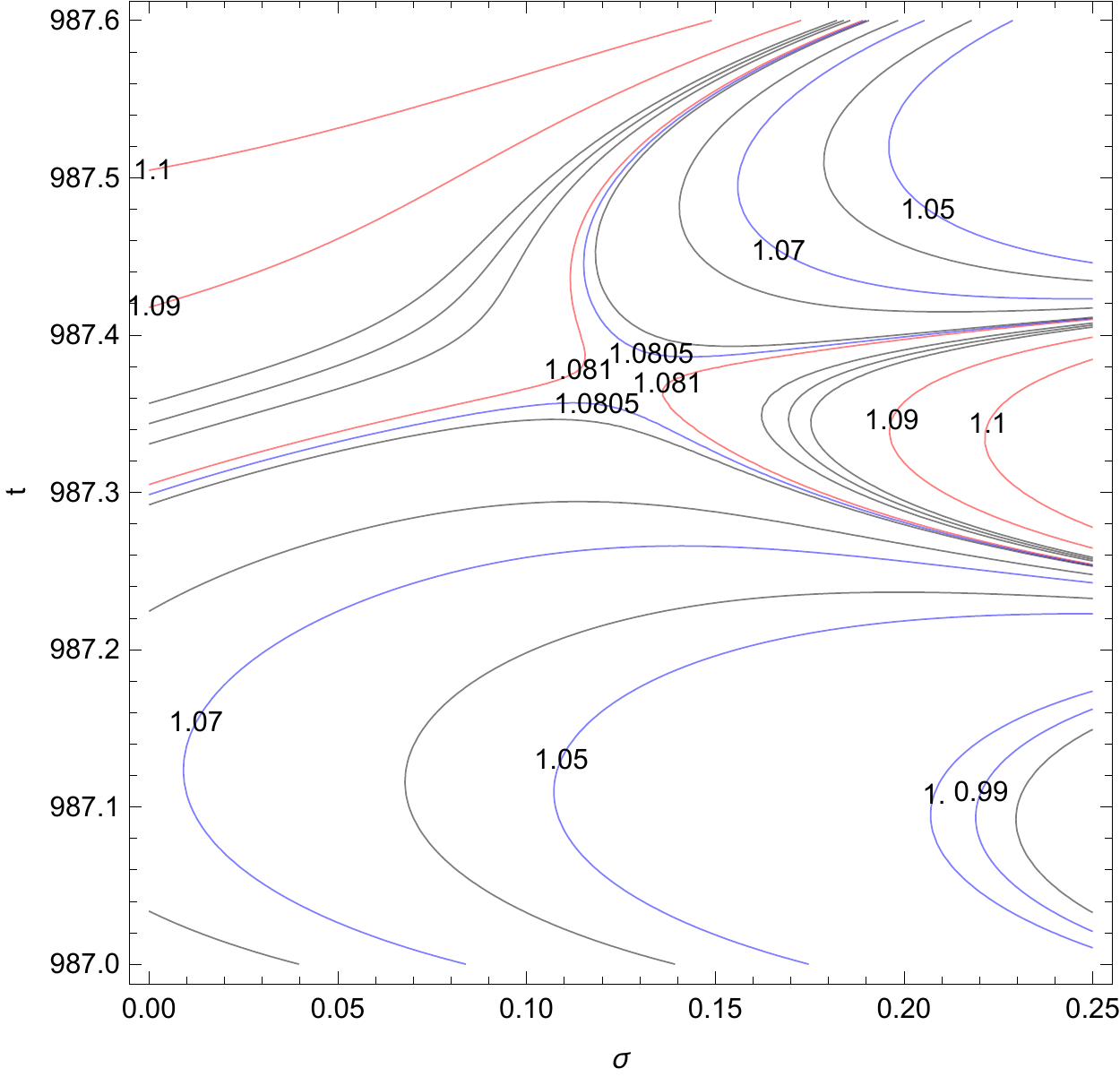}
\caption{At left, three successive regions (coloured)  in which $|{\cal V}(\sigma+i t)|\leq 1$ are shown. Black dots represent zeros, red dots poles, while green and brown dots denote zeros of $\xi_1( 2s)$ and $\xi_1(2 s-1)$. At right, corresponding contours of constant modulus, showing the region around one of the  derivative zeros of ${\cal V}(\sigma+i t)$. Below: detail of contours of constant modulus in the region of the lower derivative zero.}
\label{fig-srh2}
\end{figure}

In the first example, the contours of constant modulus shown are for the levels 0.90, 1.0, 1.018, 1.019, 1.1, 1.169, 1.170, 1.2, 1.3, 1.4.
The zeros of ${\cal U}'(s)$, or equivalently of ${\cal V}'(s)$, are approximately  $s=-0.143103+417.293 i$, where $|{\cal V}(s)|=1.16957$,
and $s=0.163301+ 418.4092 i$, where $|{\cal V}(s)|=1.01891$. The upper derivative zero is defined by four contours of constant modulus, two in green provided by the zeros of ${\cal V}(s)$, and two in blue, one pertaining to the intervening pole of ${\cal V}(s)$ and the other to a closed curve enclosing the two poles and one zero. The lower structure is not complete as shown, but the outermost curve encloses two poles and an intervening zero.

In the second example, the zeros of ${\cal U}'(s)$ are approximately
$s=0.24809+988.611 i$, where $|{\cal V}(s)|=1.001357$, and $s=0.12566+ 987.373 i$, where $|{\cal V}(s)|=1.0808$. The structure near the upper derivative zero could be described as nearly closed, with the two contours of constant modulus coming close to touching.
In consequence, the modulus at the derivative zero is much closer to unity than for the far more open structure around the lower derivative zero. For this example, both derivative zeros referred to are provided by two zeros of ${\cal V}(s)$ surrounding an intervening pole. 

A nice geometrical insight into the results of this section comes from writing (\ref{eq-srh1}) in the normal form \cite{knopp} already given in Section 2: see (\ref{wdef}).
This equation is built around the fixed points where ${\cal V}(s)=i={\cal U}(s)$ and ${\cal V}(s)=-i={\cal U}(s)$, which become the zeros and poles of ${\cal W}(s)$. Note that ${\cal W}(s)$ also incorporates the zeros and poles of ${\cal U}(s)$ and ${\cal V}(s)$:\newline
the poles and zeros of ${\cal V}(s)$ are where ${\cal W}(s)=1$ and ${\cal W}(s)=-1$ respectively;\newline
the poles and zeros of ${\cal U}(s)$ are where ${\cal W}(s)=i$ and ${\cal W}(s)=-i$ respectively.

Some further properties of ${\cal W}(s)$ are quoted next, taken from \cite{mcp13}, starting with its functional equation:
\begin{equation}
{\cal W}(1-s)=\frac{1}{{\cal W}(s)}.
\label{f1-1}
\end{equation}
It is real on the critical line, has unit modulus on the real axis, has the special values
${\cal W}(0)=-i$, ${\cal W}(1/2)=-1$, and in $t>>\sigma>>1$ has the asymptotic expansion
\begin{equation}
{\cal W}(\sigma+i t)=-i +(1-i)\sqrt{\frac{2 \pi}{t}}+\frac{2 \pi}{t}+(1+i)(\pi+\frac{\sigma}{2})\sqrt{\frac{2 \pi}{t^3}}+\ldots .
\label{f1-2}
\end{equation}
\label{thmf1}
All zeros and poles of ${\cal W}(s)$ lie on the critical line, and are interlaced:
\begin{equation}
 {\cal W}(1/2+i t)=\tan[\arg(\xi_1(1+2 i t))+\frac{\pi}{4}].
 \label{addedW}
 \end{equation}

Some properties of ${\cal W}(s)$ are exemplified in Fig. \ref{Wplot}, and Table \ref{table1}. Lines $|{\cal W}(s)|=1$ correspond to $\Im {\cal V}(s)=0$, which also correspond to $\Im {\cal U}(s)=0$. The lines of constant argument join at the zeros and poles of ${\cal W}(s)$, and cut $\sigma=1/2$ at zeros and poles of ${\cal V}(s)$, while cutting $\sigma=1/4, 3/4$ at zeros and poles of ${\cal U}(s)$. Points where
 ${\cal W}'(s)=0$ are also points where  ${\cal U}'(s)=0$ and  ${\cal V}'(s)=0$. In the example at right in Fig. \ref{Wplot}, the derivative zero is located close to $\sigma=1/4$, but still to its left. Note also that in this case the concavity of the constant modulus curves means  that the alternating property of zeros along these curves does not carry over to their $t$ values.
\begin{figure}
\includegraphics[width=7cm]{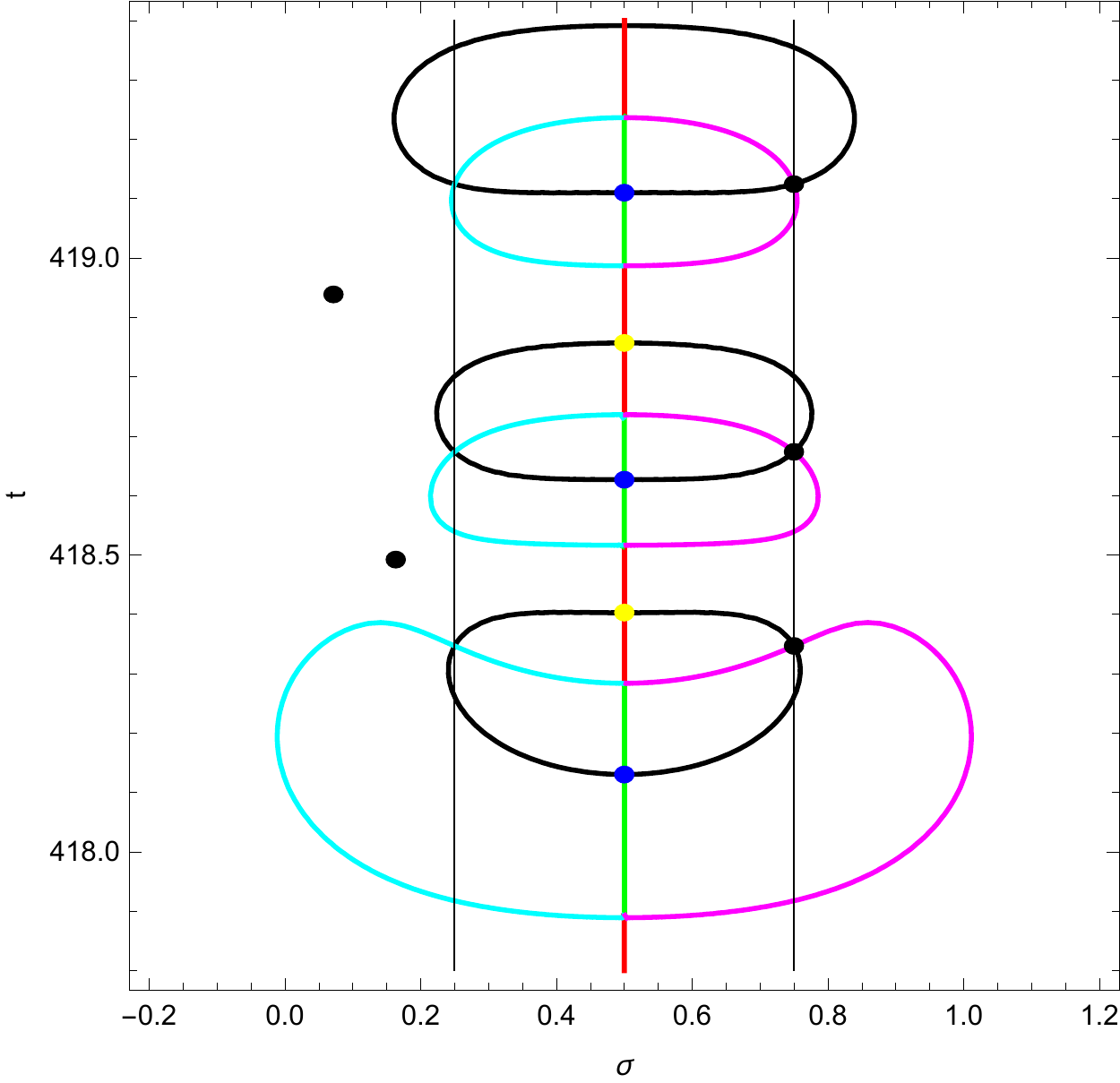}~~\includegraphics[width=7cm]{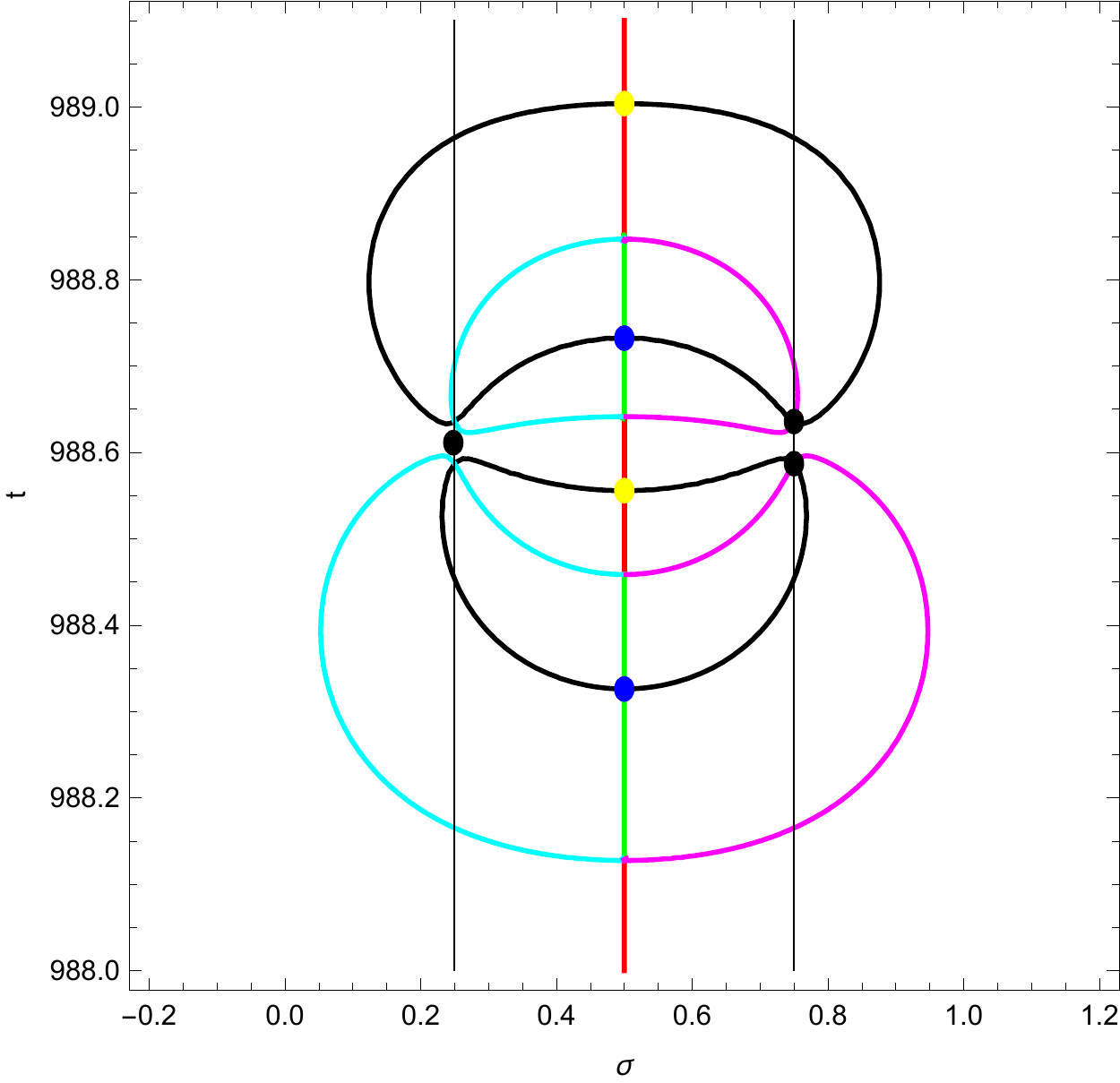}
\caption{  Contour plots showing (black lines) contours $|{\cal W}(s)|=1$. Coloured lines denote lines of constant  argument of 
${\cal W} (\sigma+ i t)$ as a function of $\sigma$  and $t$. The argument values are: $0$- red; $\pm \pi$- green, $\pi/2$-cyan; $-\pi/2$- magenta.
The dots on $\sigma=1/2$ are zeros (blue) and poles (yellow) of ${\cal V}(s)$; those on $\sigma =3/4$ are zeros of ${\cal U}(s)$, with the corresponding poles being located symmetrically on $\sigma=1/4$. The other  black dots show points where ${\cal W}'(s)=0$.}
\label{Wplot}
\end{figure}

It is easy to show that the equation $|{\cal V}(s)|=|{\cal W}(s)|$  has solutions if and only if  ${\cal U}(s)=\Re ({\cal U}(s)) (1-i)$, which places
$\arg {\cal U}(s)$ either in the second or fourth quadrants. The particular case where the common modulus is unity  arises if and only if
$\Re {\cal U}(s)=0=\Im {\cal U}(s)$.

\begin{table}
\begin{tabular}{|c|c|c|}\\ \hline
Quadrant & Line & Quadrant \\ \hline
Q2 & $|{\cal V}(s)|=1$ & Q1 \\
$|{\cal V}(s)|<1$, & $\arg {\cal U}(s)=\frac{\pi}{2}$ & $|{\cal V}(s)|>1$,\\ 
$|{\cal W}(s)|<1$ & & $|{\cal W}(s)|<1$  \\ \hline
$\arg {\cal U}(s)=\pm \pi$ & & $\arg {\cal U}(s)=0$ \\ \hline 
Q3 & $|{\cal V}(s)|=1$ & Q4 \\
$|{\cal V}(s)|<1$, & $\arg {\cal U}(s)=-\frac{\pi}{2}$ & $|{\cal V}(s)|>1$,\\ 
$|{\cal W}(s)|>1$ & & $|{\cal W}(s)|>1$  \\ \hline
Boundaries & Properties & \\ \hline
Q1-Q4: & $|{\cal W}(s)|=1$ & $|{\cal V}(s)|>1$ \\
Q1-Q2: & $|{\cal W}(s)|<1$ & $|{\cal V}(s)|=1$ \\
Q2-Q3: & $|{\cal W}(s)|=1$ & $|{\cal V}(s)|<1$ \\
Q3-Q4: & $|{\cal W}(s)|>1$ & $|{\cal V}(s)|=1$ \\ \hline
Diagonals & Properties & \\ \hline
Q1 & $\arg {\cal U}(s)=\pi/4$  & $|{\cal V}(s)|=1/|{\cal W}(s)|$\\
Q2 & $\arg {\cal U}(s)=3 \pi/4$ &  $|{\cal V}(s)|=|{\cal W}(s)|$\\
Q3 & $\arg {\cal U}(s)=-3\pi/4$ &  $|{\cal V}(s)|=1/|{\cal W}(s)|$\\
Q4 & $\arg {\cal U}(s)=- \pi/4$ &  $|{\cal V}(s)|=|{\cal W}(s)|$\\ \hline
\end{tabular}
\caption{Properties of ${\cal V}(s)$ and ${\cal W}(s)$ in the four quadrants of $\arg {\cal U}(s)$, on the lines separating them and on diagonals.}
\label{table1}
\end{table}

Given the triplet of functions ${\cal U}(s)$,  ${\cal V}(s)$ and  ${\cal W}(s)$, we can also construct relations other than (\ref{wdef}) in normal form. That involving ${\cal U}(s)$ and  ${\cal W}(s)$ is
\begin{equation}
\frac{{\cal W}(s)-1}{{\cal W}(s)+1}=i \left( \frac{{\cal U}(s)-1}{{\cal U}(s)+1}\right).
\label{uwnorm}
\end{equation}
That involving ${\cal V}(s)$ and  ${\cal W}(s)$ is more complicated.
Let
\begin{equation}
{\cal V}_1=\sqrt{2 + \sqrt{3}} \exp [-i \pi/4], ~{\cal V}_2=\sqrt{2 - \sqrt{3}} \exp [i 3 \pi/4],~ {\cal V}_3=-\frac{1}{2}+\frac{\sqrt{3}}{2} i.
\label{wvnorm1}
\end{equation}
Then the ${\cal W}$, ${\cal V}$ equation in normal form is
\begin{equation}
\frac{{\cal W}(s)-{\cal V}_1}{{\cal W}(s)-{\cal V}_2}={\cal V}_3 \left(\frac{{\cal V}(s)-{\cal V}_1}{{\cal V}(s)-{\cal V}_2}\right).
\label{wvnorm2}
\end{equation}

\section{Four Propositions}
There exist in the extensive literature devoted to the Riemann hypothesis several relevant papers concerning the location of zeros of the derivative of the Riemann zeta function. An early example is the paper in German of
A. Speiser \cite{speiser}, for which detailed comments exist in English \cite{xray}. Spieser states that the Riemann hypothesis is equivalent to the proposition that the non-trivial zeros of $\zeta '(s)$ lie in $\sigma>1/2$, i.e. to the right of the critical line.  
Arias-de-Reyna comments that Speiser's methods
are "between the proved and the acceptable", and gives numerical examples illustrating Speiser's reasoning. A further comment is that a flawless proof of a stronger result is that due to Levinson and Montgomery \cite{levinson-montgomery}, which we  now quote.

\begin{theorem} (Levinson and Montgomery) 
The Riemann hypothesis is equivalent to $\zeta '(s)$ having no zeros in $0<\sigma<1/2$.
\end{theorem}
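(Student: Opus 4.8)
The plan is to prove the two implications separately, treating ``RH $\Rightarrow$ $\zeta'(s)\neq 0$ in $0<\sigma<1/2$'' as the softer (sign-theoretic) direction and the converse as the substantive (existence) direction.

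For the forward direction I would work with the completed function $\xi(s)$ of (\ref{intro2a}) and its Hadamard product. From $\xi'(s)/\xi(s)=\sum_\rho 1/(s-\rho)$, summed symmetrically by pairing each zero $\rho$ with $\bar\rho$, one sees that under the Riemann hypothesis every pair contributes to $\mathrm{Re}\,[\xi'(s)/\xi(s)]$ a quantity with the sign of $\sigma-1/2$; hence $\mathrm{Re}\,[\xi'(s)/\xi(s)]<0$ throughout $\sigma<1/2$. Writing $\xi'/\xi=G+\zeta'/\zeta$, where $G(s)$ is the logarithmic derivative of the elementary factor $\tfrac12 s(s-1)\pi^{-s/2}\Gamma(s/2)$, a holomorphic function with $\mathrm{Re}\,G(s)=\tfrac12\log(t/2)+O(1)$ as $t\to\infty$ uniformly for $0\le\sigma\le 1/2$, a zero of $\zeta'$ in the open left strip would (since RH forces $\zeta\neq 0$ there) be a point where $\zeta'/\zeta=0$, hence where $\xi'/\xi=G$; taking real parts this demands $\mathrm{Re}\,G<0$, which is false once $t$ exceeds an explicit bound $T_0$. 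The remaining compact region $\{0\le\sigma\le 1/2,\ 0\le t\le T_0\}$ is then disposed of by the (known, directly checkable) fact that $\zeta'$ has no zeros there.

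For the converse I would argue the contrapositive and count zeros. Assume $\zeta$ has a zero off the critical line; by the functional equation and conjugation we may place a zero $\rho_0=\beta_0+i\gamma_0$ of $\zeta$ with $0<\beta_0<1/2$ and $\gamma_0>0$. Apply the argument principle to $\zeta'(s)/\zeta(s)$ on the rectangle $\mathcal R_T=\{0<\sigma<1/2,\ 0<t<T\}$, giving
\[
\#\{\zeta'=0\text{ in }\mathcal R_T\}-\#\{\zeta=0\text{ in }\mathcal R_T\}=\frac{1}{2\pi}\,\Delta_{\partial\mathcal R_T}\arg\frac{\zeta'(s)}{\zeta(s)}
\]
(with routine corrections should a zero fail to be simple). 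On the edge $t=0$ both $\zeta$ and $\zeta'$ are real and of the same sign, so the contribution is $O(1)$. On the edge $\sigma=1/2$ the Riemann--Siegel representation $\zeta(1/2+it)=Z(t)e^{-i\theta(t)}$ gives $\mathrm{Re}\,[\zeta'/\zeta](1/2+it)=-\theta'(t)<0$ for $t$ large, so $\zeta'/\zeta$ is trapped in the open left half-plane and contributes $O(1)$; on the edge $\sigma=0$ the functional equation gives $[\zeta'/\zeta](it)=[\chi'/\chi](it)-[\zeta'/\zeta](1-it)$ with $[\chi'/\chi](it)=-\log(t/2\pi)+o(\log t)$ dominating, so again $\zeta'/\zeta$ stays in the left half-plane and contributes $O(1)$. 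Thus the only unbounded boundary term is that on the horizontal cut $t=T$, which is $O(\log T)$ and can be minimised by choosing $T$ to avoid the zeros of $\zeta\zeta'$. Combining, $\#\{\zeta'=0\text{ in }\mathcal R_T\}\ge\#\{\zeta=0\text{ in }\mathcal R_T\}-C\log T$, and one then uses the mirror symmetry of the off-line zeros together with Speiser's analysis of the level curves $\mathrm{Re}\,\zeta=0$ and $\mathrm{Im}\,\zeta=0$ near such a cluster to match each off-line zero of $\zeta$ on the left with a distinct nearby off-line zero of $\zeta'$, forcing $\#\{\zeta'=0\text{ in }\mathcal R_T\}\ge 1$.

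The hard part will be exactly this last step: upgrading the asymptotic comparison ``$\zeta$ and $\zeta'$ have the same number of left-strip zeros up to $O(\log T)$'' to the \emph{local} statement that a single off-line zero of $\zeta$ produces an off-line zero of $\zeta'$. Speiser's original topological argument achieves this by tracking the curves $\mathrm{Re}\,\zeta=0$, $\mathrm{Im}\,\zeta=0$, but — as noted in the commentary on \cite{speiser} — it sits ``between the proved and the acceptable''; making it airtight requires the quantitative control of $\arg(\zeta'/\zeta)$ along $\partial\mathcal R_T$ described above, carried out with enough uniformity that the $O(\log T)$ term on $t=T$ cannot absorb the contribution of the off-line zeros. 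I would expect the bookkeeping on the edge $t=T$, and the precise pairing of each $\zeta'$-zero with a $\zeta$-zero, to be where essentially all the work lies.
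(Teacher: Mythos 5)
You should first be aware that the paper does not prove this statement: it is quoted as a known theorem with the citation \cite{levinson-montgomery}, and the surrounding text explicitly defers to Levinson and Montgomery for a ``flawless proof.'' So there is no internal argument to compare yours against; what follows measures your proposal against the published proof. Your forward direction (RH $\Rightarrow$ $\zeta'\neq 0$ in $0<\sigma<1/2$) is essentially the correct standard argument: under RH each conjugate pair of zeros contributes $(\sigma-\tfrac12)/|s-\rho|^2$ to $\Re[\xi'/\xi]$, so $\Re[\xi'/\xi]<0$ strictly to the left of the critical line, while $\Re\,G(s)=\tfrac12\log(t/2\pi)+O(1/t)>0$ for $t$ beyond an explicit bound, and the remaining compact region is a finite (and documented) verification. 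That half is sound.

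The genuine gap is in the converse, and you have named it yourself without closing it. The global argument-principle computation over $0<t<T$ delivers only a comparison of zero counts with an $O(\log T)$ error, and a single off-line zero of $\zeta$ --- indeed any family of size $o(\log T)$ --- is invisible at that precision. Neither of your proposed repairs succeeds: (i) the contribution of the edge $t=T$ cannot be driven down to $O(1)$ by choosing $T$ cleverly, because the number of sign changes of $\Re[\zeta'/\zeta]$ across the strip at height $T$ is genuinely of order $\log T$ for typical $T$; and (ii) retreating to Speiser's level-curve topology reinstates exactly the step that the paper, quoting Arias-de-Reyna \cite{xray}, describes as ``between the proved and the acceptable'' --- i.e.\ the non-rigorous argument this theorem is cited in order to replace. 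The published proof of this direction is local rather than global: assuming $\zeta'\neq 0$ in the open left half-strip, Levinson and Montgomery apply the argument principle to $\zeta'/\zeta$ on a short rectangle $T_1<t<T_2$ bracketing the hypothetical off-line zero, use the identity $\Re[\zeta'/\zeta](\tfrac12+it)=-\tfrac12\log(t/2\pi)+O(t^{-2})<0$ (your key fact) to confine the image of the right edge --- and, after widening the rectangle into $\sigma<0$, of the left edge --- to a half-plane, and prove a separate selection lemma guaranteeing heights $T_1,T_2$ at which the horizontal edges also contribute less than $\pi$ each to the change of argument. It is that edge-selection lemma, eliminating every $O(\log T)$ term from the contour integral, which converts the asymptotic count into the pointwise statement; without it your proposal proves only one of the two implications.
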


We now consider the  equivalent statement for ${\cal U}(s)$ for $1/2<\sigma<3/4$. As a stimulus for  this,we give in Fig. \ref{fig-srh3} graphs of $\arg {\cal U} (\sigma+ i t)$ for $\sigma=1/2$,  $\sigma=3/4$ and  $\sigma=0.753$. The notable point is that the argument decreases monotonically as $t$ increases for $\sigma=3/4$ (as it has been shown by Ki to do for $\sigma=1/2$), but for values in excess of $\sigma=3/4$ it is no longer monotonic, but contains segments in which it increases with $t$.

 \begin{figure}[tbh]
\includegraphics[width=7cm]{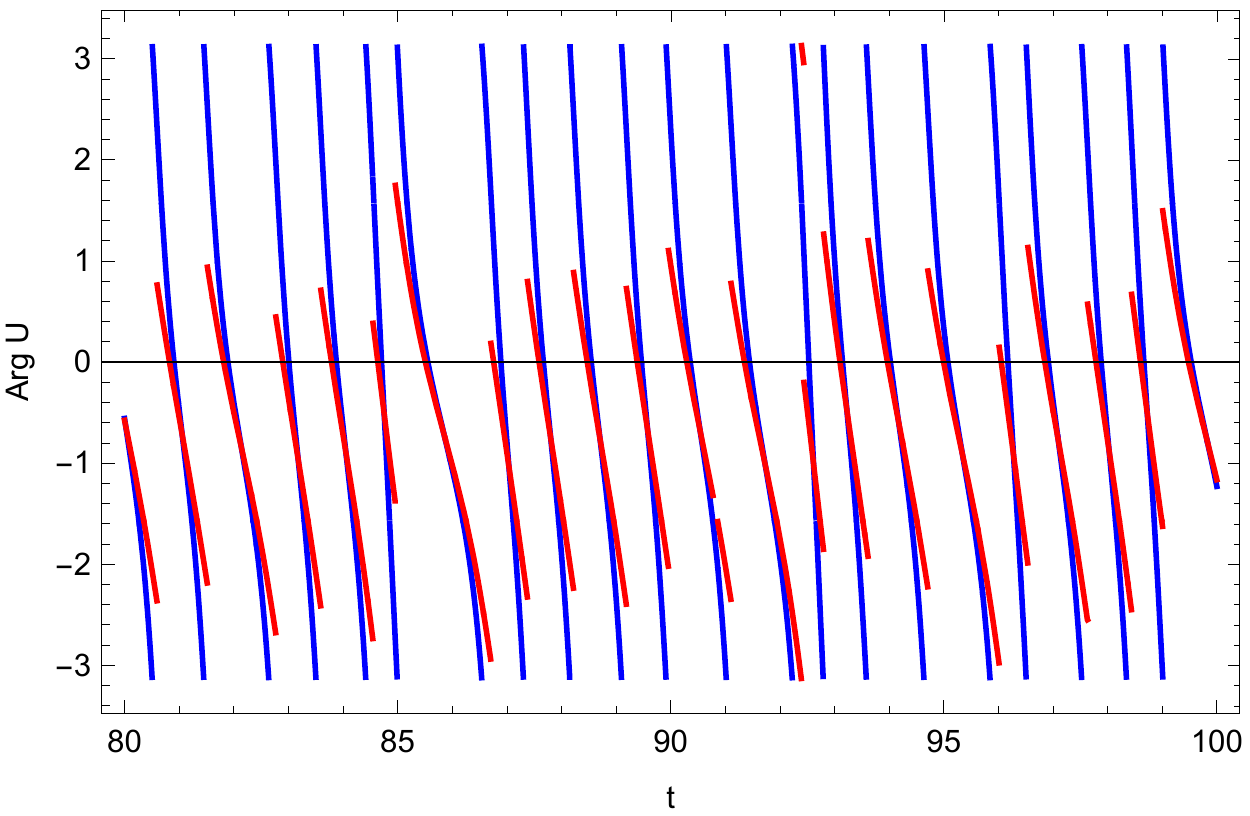}~\includegraphics[width=7cm]{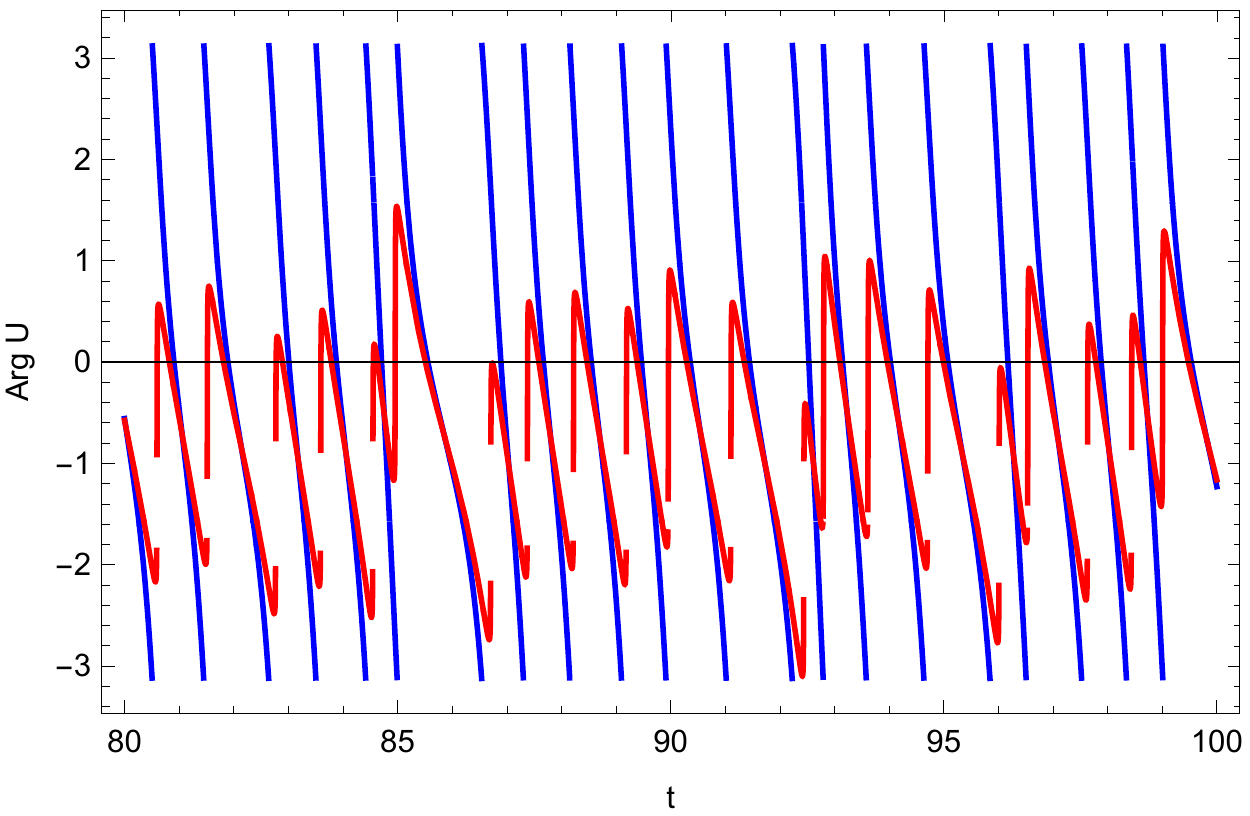}
\caption{ Plots of the argument of ${\cal U} (\sigma+ i t)$ for $\sigma=1/2$ (blue) and in red for (left) $\sigma=3/4$ and for (right) $\sigma=0.753$.}
\label{fig-srh3}
\end{figure}

We will discuss this question  in the context of four propositions, and discuss their relationships. In this section we will discuss the first two, and in the next the relationship between them and the second pair.

{\bf Proposition 1}: All non-trivial zeros of ${\cal U}(s)$ lie on $\sigma=3/4$, and all non-trivial poles on $\sigma=1/4$.

{\bf Proposition 2}: No zeros of ${\cal U}'(s)$ or ${\cal V}'(s)$ or ${\cal W}'(s)$  lie between or on the lines $\sigma=3/4$ and $\sigma=1/4$.

{\bf Proposition 3}: All  non-trivial zeros of ${\cal U}'(s)$ or ${\cal V}'(s)$ or ${\cal W}'(s)$  correspond to a modulus $|{\cal V}(s)|>1$.

{\bf Proposition 4}: The values ${\cal V}(1/2+ i t)=i$ and ${\cal V}(1/2+ i t)=-i$ are attained by contours of constant $|{\cal V}(s)|$ increasing in modulus from zero. 

Proposition 1 is of course the Riemann hypothesis.

\subsection{Comments based on assuming  Proposition 1}
The following Theorem is an extension of a discussion in Section 4 of \cite{mcp13}, and in particular of its Corollary 4.2.
\begin{theorem}
If Proposition 1 holds, then $\Re \frac{\partial}{\partial s} \log {\cal U}(s)<0 $ in $\frac{1}{4} \leq \sigma \leq \frac{3}{4}$ for $t>4.08046$.
\label{thm2to3}
\end{theorem}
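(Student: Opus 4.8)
The plan is to express $\frac{\partial}{\partial s}\log{\cal U}(s)$ as a partial-fraction sum over the non-trivial zeros of $\zeta$ and then read off the sign of its real part directly from Proposition~1. Since ${\cal U}(s)=\xi_1(2s-1)/\xi_1(2s)$,
\[
\frac{\partial}{\partial s}\log{\cal U}(s)=2\Bigl[\frac{\xi_1'(2s-1)}{\xi_1(2s-1)}-\frac{\xi_1'(2s)}{\xi_1(2s)}\Bigr].
\]
Using $\xi(u)=\tfrac12 u(u-1)\xi_1(u)$, so that $\frac{\xi_1'(u)}{\xi_1(u)}=\frac{\xi'(u)}{\xi(u)}-\frac1u-\frac1{u-1}$, together with the Hadamard expansion $\frac{\xi'(u)}{\xi(u)}=B+\sum_\rho\bigl(\frac1{u-\rho}+\frac1\rho\bigr)$, the constant $B$ and the terms $\sum 1/\rho$ cancel between the two pieces, and the elementary pole contributions combine, leaving the clean identity
\[
\frac{\partial}{\partial s}\log{\cal U}(s)=2\sum_\rho\frac{1}{(2s-\rho)(2s-1-\rho)}-\frac{1}{s(s-1)},
\]
with the sum absolutely convergent. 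All the structure of the problem is contained here.

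Now take real parts, using Proposition~1 to put $\rho=\tfrac12+i\gamma$ with $\gamma$ real. With $a=2\sigma$ and $\beta=2t-\gamma$, one finds
\[
\Re\frac{1}{(2s-\rho)(2s-1-\rho)}=\frac{(a-\tfrac12)(a-\tfrac32)-\beta^2}{\bigl[(a-\tfrac12)(a-\tfrac32)-\beta^2\bigr]^2+\beta^2(2a-2)^2}.
\]
On $\tfrac14\le\sigma\le\tfrac34$ we have $(a-\tfrac12)(a-\tfrac32)=(a-1)^2-\tfrac14\le0$, so every term of the sum has real part $\le0$, and strictly $<0$ for the infinitely many $\rho$ with $\gamma\ne2t$. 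Moreover $(2a-2)^2=(4\sigma-2)^2\le1$ on the strip, so, writing $R'=-(a-\tfrac12)(a-\tfrac32)\in[0,\tfrac14]$, the denominator is at most $(\beta^2+R')(\beta^2+R'+1)$ and hence $\bigl|\Re\tfrac{1}{(2s-\rho)(2s-1-\rho)}\bigr|\ge\tfrac{1}{\beta^2+R'+1}\ge\tfrac{1}{(2t-\gamma)^2+5/4}$, uniformly in $\sigma$. For the correction term, $\Re\bigl[-\tfrac1{s(s-1)}\bigr]=\tfrac{t^2+\sigma(1-\sigma)}{|s(s-1)|^2}>0$, and since $|s(s-1)|^2\ge(t^2+\sigma(1-\sigma))^2$ this is at most $\tfrac1{t^2+\sigma(1-\sigma)}\le\tfrac1{t^2+3/16}$. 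Combining,
\[
\Re\frac{\partial}{\partial s}\log{\cal U}(s)\le-2\sum_\rho\frac{1}{(2t-\gamma)^2+5/4}+\frac{1}{t^2+3/16},
\]
and the Theorem reduces to the purely quantitative inequality $\sum_\rho\bigl[(2t-\gamma)^2+\tfrac54\bigr]^{-1}>\tfrac12\bigl(t^2+\tfrac3{16}\bigr)^{-1}$ for $t>4.08046$.

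This inequality is the real content. For $t$ beyond a moderate bound it is easy: by the Riemann--von Mangoldt formula there are $\asymp\log t$ zero ordinates within distance $1$ of $2t$, so the left side is bounded below by a positive multiple of $\log t$ while the right side is $O(1/t^2)$. For $t$ in the remaining bounded range one bounds the left side below by the contributions of the first few ordinates $\gamma_1=14.1347\ldots$, $\gamma_2=21.022\ldots$, etc., and checks the inequality numerically; $4.08046$ is the explicit cutoff this produces (with whatever refinement of the crude bounds above one cares to make). A good cross-check --- and, using Ki's Lemma, the cleanest treatment of the slice $\sigma=\tfrac12$ --- is that ${\cal U}(\tfrac12+it)=e^{-2i\theta_1(t)}$ with $\theta_1(t)=\arg\xi_1(1+2it)$, whence by the Cauchy--Riemann equations $\Re\frac{\partial}{\partial s}\log{\cal U}(\tfrac12+it)=\partial_\sigma\log|{\cal U}|=\partial_t\arg{\cal U}=-2\theta_1'(t)=-2\theta'(t)$; since Ki's Lemma makes $\theta$ convex on $(0,7)$ and increasing on $[7,\infty)$, $\theta'$ is positive to the right of its unique interior zero. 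One verifies in addition that at fixed $t$ the value of $\Re\frac{\partial}{\partial s}\log{\cal U}$ is largest on $\sigma=\tfrac12$ among $\sigma\in[\tfrac14,\tfrac34]$, so the critical line controls the whole strip.

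The main obstacle is precisely this quantitative step: one must produce a bound that is uniform in $\sigma$ and strong enough to beat the small --- but genuinely nonzero --- correction $\tfrac1{s(s-1)}$ all the way down to the stated $t$, which forces one to control the tail of the zero-sum well enough that the finite computation with the first ordinates is conclusive, and to handle all large $t$ at once. The sign analysis (the Hadamard identity and the inequality $(a-\tfrac12)(a-\tfrac32)\le0$) is routine. Finally, I would stress that a lower cutoff on $t$ is unavoidable, not an artefact of the method: for small $t$ the $\tfrac1{s(s-1)}$ term dominates and $\Re\frac{\partial}{\partial s}\log{\cal U}$ is positive on part of the strip --- e.g. along $\sigma=\tfrac12$ it tends to $4-2\sum_\rho(\tfrac14+\gamma^2)^{-1}>0$ as $t\to0^+$ --- so any proof must isolate an explicit threshold of this kind.
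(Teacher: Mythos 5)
Your proof follows essentially the same route as the paper's: a partial-fraction representation of $\Re\,\frac{\partial}{\partial s}\log{\cal U}(s)$ over the non-trivial zeros (which the paper asserts directly and you derive from the Hadamard product), the observation that under Proposition 1 every zero-term has non-positive real part on $\tfrac14\le\sigma\le\tfrac34$ while the residual $-1/(s(s-1))$ contribution is positive but $O(1/t^2)$, and a numerical comparison with the contribution of the lowest zero ordinate to fix the threshold $t>4.08046$. Your version is slightly tidier in that the bound $\bigl|\Re\bigl[(2s-\rho)^{-1}(2s-1-\rho)^{-1}\bigr]\bigr|\ge\bigl[(2t-\gamma)^2+\tfrac54\bigr]^{-1}$ is uniform in $\sigma$ and so covers the boundary lines $\sigma=\tfrac14,\tfrac34$ and the interior in one stroke, where the paper treats them as separate cases; both versions ultimately rest on the same explicit numerical verification for moderate $t$.
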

\begin{proof}
We let $s_p$  for $p$ integral run over all non-trivial poles of ${\cal U}(s)$ in $t>0$, which by assumption can be written $s_p=1/4+i t_p$. The non-trivial zeros
of ${\cal U}(s)$ are then located at $s_p+1/2=3/4+i t_p$. Taking into account the zero and pole of ${\cal U}(s)$ on the real axis, we have the representation 
\begin{eqnarray}
\Re \frac{\partial}{\partial s} \log {\cal U}(s)&=&-\Re \left[  \frac{1}{s-1}-\frac{1}{s}\right]
-\sum_{p=1}^\infty m_p \Re \left[ \frac{1}{s-s_p}-\frac{1}{s-1/2-s_p}\right] \nonumber\\
 & & -\sum_{p=1}^\infty m_p \Re \left[ \frac{1}{s-{\bar s}_p}-\frac{1}{s-1/2-{\bar s}_p}\right].
\label{thm23-1}
\end{eqnarray}
Here the $m_p$ take into account possible zeros and poles of multiplicities exceeding unity. The equation (\ref{thm23-1})  simplifies:
\begin{eqnarray}
\Re \frac{\partial}{\partial s} \log {\cal U}(s)&=&-\Re \left[  \frac{1}{s(s-1)}\right]
+\frac{1}{2}\sum_{p=1}^\infty m_p \Re \left[ \frac{1}{(s-s_p)(s-1/2-s_p)}\right] \nonumber\\
 & & +\frac{1}{2}\sum_{p=1}^\infty m_p \Re \left[ \frac{1}{(s-{\bar s}_p)(s-1/2-{\bar s}_p) }\right].
\label{thm23-2}
\end{eqnarray}
We now substitute for $s_p$ as above  and obtain, after identification of the real part,
\begin{eqnarray}
\Re \frac{\partial}{\partial s} \log {\cal U}(s)&=& \left[  \frac{\sigma (1-\sigma)+t^2}{(\sigma^2 +t^2)((1-\sigma)^2+t^2)}\right]\nonumber \\
& &+\frac{1}{2}\sum_{p=1}^\infty m_p  \left[ \frac{(\sigma -1/4)(\sigma -3/4)-(t-t_p)^2 }{((\sigma -1/4)^2+(t-t_p)^2)((\sigma -3/4)^2+(t-t_p)^2)}\right] \nonumber\\
 & & +\frac{1}{2}\sum_{p=1}^\infty m_p  \left[ \frac{(\sigma -1/4)(\sigma -3/4)-(t+t_p)^2 }{((\sigma -1/4)^2+(t+t_p)^2)((\sigma -3/4)^2+(t+t_p)^2)}\right]. \nonumber  \\
 & & 
\label{thm23-3}
\end{eqnarray}
The expression (\ref{thm23-3}) consists of two parts. The first is positive and converges as $1/t^2$ for $t$ large. Its expansion for $|t|$ large starts as $1/t^2$. The second has all of its terms  negative if $1/4<\sigma<3/4$, and consists of two sums which converge quadratically as respectively $|t-t_p|$ and $|t+t_p|$ increase. Away from $\sigma=1/4$ and  $\sigma=3/4$, we may expand the dominant elements of the second part to leading order as
\begin{equation}
-\frac{1}{2}\sum_{p=1}^\infty \left[\frac{1}{1/4+(t-t_p)^2}+\frac{1}{1/4+(t+t_p)^2}\right].
\label{spart}
\end{equation}
The second part  is then readily seen to exceed the first part in modulus  when $t$ is not small. This establishes the result in between the lines of zeros and poles.

Finally, consider the special case $\sigma=3/4$ (with a similar argument applying to $\sigma=1/4$) . Then we obtain
\begin{equation}
\Re \frac{\partial}{\partial s} \log {\cal U}(s)=\frac{3/16+t^2}{(9/16+t^2)(1/16+t^2)}-\frac{1}{2}
\sum_{p=1}^\infty \left[\frac{1}{1/4+(t-t_p)^2}+\frac{1}{1/4+(t+t_p)^2}\right].
\label{thm23-4}
\end{equation}
Numerically, the  term $p=1$ in the sum has  larger magnitude than the first term on the right-hand side when $t>4.08046$. 
\end{proof}
Thus, Proposition 1 implies Proposition 2, 
and leads to an  important result following immediately from Theorem \ref{thm2to3}:
\begin{corollary} If the Riemann hypothesis holds, then all zeros of $\zeta (s)$ on the critical line are simple.
\label{multzer}
\end{corollary}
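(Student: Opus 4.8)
The plan is to read the Corollary off Theorem~\ref{thm2to3} through the chain Theorem~\ref{thm2to3} $\Rightarrow$ Proposition~2 $\Rightarrow$ simplicity. First I would set up the dictionary between zeros of $\zeta$ on the critical line and zeros of ${\cal U}(s)$. Since $\xi_1(s)=\xi_1(1-s)$ carries precisely the non-trivial zeros of $\zeta(s)$ inside the critical strip, the zeros of the numerator $\xi_1(2s-1)$ in (\ref{eq-srh2}) occur at $s=\tfrac34+i\tau$ exactly when $\zeta(\tfrac12+2i\tau)=0$, and with the same order, because the denominator $\xi_1(2s)$ is holomorphic and non-vanishing there (its zeros lying on $\sigma=\tfrac14$ under Proposition~1). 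Hence, granting Proposition~1, a zero of $\zeta$ of order $m$ at $\tfrac12+2it_0$ produces a zero of ${\cal U}(s)$ of order $m$ at $s_0=\tfrac34+it_0$, and the Corollary reduces to: every such zero of ${\cal U}(s)$ is simple. Note that the lowest-lying such zero of ${\cal U}$ sits at $t_0\simeq 7.07$ (coming from the first zero of $\zeta$ at $\tfrac12+14.13\,i$), comfortably inside the range $t>4.08046$ in which Theorem~\ref{thm2to3} operates, so the range restriction costs nothing.

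Next I would translate the order of a zero of ${\cal U}$ into a statement about derivatives. If ${\cal U}$ has a zero of order $m\ge 2$ at $s_0$ then ${\cal U}'(s_0)=0$; and from ${\cal V}=(1+{\cal U})/(1-{\cal U})$ one gets ${\cal V}'=2{\cal U}'/(1-{\cal U})^2$ and, similarly, ${\cal W}'$ equals a non-vanishing holomorphic multiple of ${\cal U}'$ near $s_0$ (there ${\cal U}(s_0)=0\neq 1,\pm i$), so ${\cal V}'(s_0)={\cal W}'(s_0)=0$ as well. Thus a multiple zero of $\zeta$ on the critical line forces all three derivatives ${\cal U}',{\cal V}',{\cal W}'$ to vanish at $s_0=\tfrac34+it_0$, a point on the line $\sigma=\tfrac34$; this is exactly what Proposition~2 forbids, and Theorem~\ref{thm2to3} is meant to deliver Proposition~2. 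In the open strip $\tfrac14<\sigma<\tfrac34$ the function ${\cal U}$ has no zeros (Proposition~1), so $\partial_s\log{\cal U}={\cal U}'/{\cal U}$ is finite there and $\Re\,\partial_s\log{\cal U}<0$ immediately gives ${\cal U}'\neq0$; likewise, at points of the line $\sigma=\tfrac34$ which are not zeros of ${\cal U}$, the same inequality again gives ${\cal U}'\neq0$. Combining this with the vanishing just derived yields the contradiction, so $m=1$.

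The step I expect to be the real obstacle is the behaviour exactly at the zeros of ${\cal U}$ on the boundary line $\sigma=\tfrac34$. There $\partial_s\log{\cal U}$ has a simple pole, so the bare inequality $\Re\,\partial_s\log{\cal U}<0$ is not literally a pointwise statement and does not by itself exclude ${\cal U}'(s_0)=0$: approaching $s_0$ from inside the strip, $\Re\,\partial_s\log{\cal U}\to-\infty$ whatever the order $m$. To close this gap I would work with the regularised logarithmic derivative, using the identity $\Re\,\partial_s\log{\cal U}(\tfrac34+it)=\tfrac{d}{dt}\arg{\cal U}(\tfrac34+it)$ together with the continuous formula (\ref{thm23-4}) (equivalently (\ref{thm23-3}) at $\sigma=\tfrac34$) for this quantity across $t=t_0$ — the singular part of $\partial_s\log{\cal U}$ being purely imaginary on the line — and combine it with a local argument-principle count of the zeros of ${\cal U}'$ in a thin rectangle straddling $\sigma=\tfrac34$, the aim being that a zero of ${\cal U}'$ on the line would be incompatible with the strict monotone decrease of $\arg{\cal U}$ there. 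An alternative route exploits the $s\to 1-s$ functional equations of ${\cal V}$ and ${\cal W}$ and the observation that a multiple zero of ${\cal U}$ at $s_0$ makes $s_0$ a critical point of $|{\cal V}|$ — a self-crossing of the level set $|{\cal V}(s)|=1$ off the critical line — which one then rules out from the established geometry around the (simple) zeros of ${\cal V}$. Either way, this boundary analysis is the crux; the rest — the dictionary with $\zeta$, the elementary derivative computations, and the harmless range restriction — is routine.
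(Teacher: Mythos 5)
Your proposal follows the same route as the paper: Theorem~\ref{thm2to3} is taken to deliver Proposition~2 on the closed strip $\tfrac14\le\sigma\le\tfrac34$, a multiple zero of $\zeta$ on the critical line is translated into a multiple zero of ${\cal U}$ on $\sigma=\tfrac34$ and hence a zero of ${\cal U}'$ there, and the two are played off against each other. Your ``dictionary'' paragraph is an explicit version of the paper's one-line appeal to ``the properties of the prefactor of $\zeta(2s-1)/\zeta(2s)$'', and your observation that the lowest relevant zero of ${\cal U}$ sits at $t_0\simeq 7.07>4.08046$ replaces the paper's appeal to the numerical verification of simplicity for small $t$. Where you go beyond the paper is in flagging the boundary issue: at a zero $s_0$ of ${\cal U}$ of order $m$ on $\sigma=\tfrac34$, the singular part $m/(s-s_0)$ of $\partial_s\log{\cal U}$ has identically vanishing real part along that line, so the (finite, negative) value that (\ref{thm23-3})--(\ref{thm23-4}) assign there is negative for \emph{every} $m$ and does not by itself distinguish $m=1$ from $m\ge 2$. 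You are right that this is the crux, and you should know that the paper's own proof does not close it: it simply asserts that negativity of $\Re\,\partial_s\log{\cal U}$ on the lines $\sigma=\tfrac14,\tfrac34$ implies ${\cal U}'$ has no zeros there, which is correct at points where ${\cal U}\ne 0,\infty$ but is precisely the contested claim at the zeros and poles themselves. So your proposed supplementary local analysis (an argument-principle count in a thin rectangle straddling $\sigma=\tfrac34$, or the level-set geometry of $|{\cal V}|$) is not reproducing something the paper already contains; it is an attempt to repair a step the paper leaves implicit, and as written neither your sketch nor the paper's three-sentence proof actually completes that step.
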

\begin{proof}
All zeros of $\zeta (s)$ for $t$ up to $O(10^9)$ are known numerically to lie on the critical line, and to be simple. From the consideration of  $\Re \frac{\partial}{\partial s} \log {\cal U}(s)$ on the lines $\sigma=3/4, 1/4$,we know that  ${\cal U}'(s)$ has no zeros there.
Taking into account the properties  of the prefactor of $\zeta(2 s-1)/\zeta(2 s)$ in the expression (\ref{eq-srh2}) for ${\cal U}(s)$, this leads to the result.
\end{proof}
\subsection{The equivalence of Propositions 1 and 2}
To establish the equivalence of Propositions 1 and 2, we need the complementary statement: if Proposition 1 fails, then so does Proposition 2.  To  motivate the discussion of this, we  multiply the function ${\cal V}(s)$ by a function which introduces an extra pair of zeros at $s_{\pm}=3/4\pm \delta +i t_*$,
together with appropriate
balancing terms to  preserve symmetry under $s\rightarrow 1-s$ and complex conjugation:
\begin{equation}
{\cal F}(s,\delta, t_*)=\frac{[s-(3/4+\delta+i t_*)][s-(3/4+\delta-i t_*)][s-(3/4-\delta+i t_*)][s-(3/4-\delta-i t_*)]}
{[s-(1/4+\delta+i t_*)][s-(1/4+\delta-i t_*)][s-(1/4-\delta+i t_*)][s-(1/4-\delta-i t_*)]}.
\label{equiv231}
\end{equation}
We replace $s$ by $\sigma+ i t$, and define $\tilde{\sigma}=\sigma-3/4$, $\hat{\sigma}=\sigma-1/4 =\tilde{\sigma}-1/2$, $t_{\pm}=t \pm t_*$. We then obtain:
\begin{equation}
{\cal F}(\sigma, t,\delta, t_*)=\frac{[(\tilde{\sigma}+i t_-)^2-\delta^2][(\tilde{\sigma}+i t_+)^2-\delta^2]}
{[(\hat{\sigma}+i t_-)^2-\delta^2][(\hat{\sigma}+i t_+)^2-\delta^2]}.
\label{equiv232}
\end{equation}
We then define functions incorporating the zeros and poles off $\sigma=3/4$ and $\sigma=1/4$:
\begin{equation}
{\cal U}_{oa}(s,\delta, t_*)={\cal U}(s) {\cal F}(s,\delta, t_*),
\label{equiv233}
\end{equation}
and
\begin{equation}
{\cal V}_{oa}(s,\delta, t_*)=\frac{1+{\cal U}_{oa}(s,\delta, t_*)}{1-{\cal U}_{oa}(s,\delta, t_*)},
\label{equiv233v}
\end{equation}
\begin{equation}
{\cal W}_{oa}(s,\delta, t_*)=\frac{{\cal V}_{oa}(s,\delta, t_*)-i}{ {\cal V}_{oa}(s,\delta, t_*)+i}.
\label{equiv233w}
\end{equation}

The artificial example of this construction is given in Fig. \ref{offaxisWV}, for the case $\delta=0.05$, $t_*=418.85$. There are closed
contours of unit modulus as shown before in  Fig. \ref{Wplot}, together with  new closed contours giving the inner zero and pole of
${\cal U}_{oa}(s,0.05,418.85)$, and open contours corresponding to its outer pole and zero.  However, of most interest to this argument
is the position of the derivative zero of  ${\cal U}_{oa}$ at the point $0.737209 + 418.847 i$ (together with another at  $0.262791 + 418.847 i$). This indicates a breakdown of Proposition 3 due to that of Proposition 2 in this artificial case.
          
\begin{figure}
\includegraphics[width=7cm]{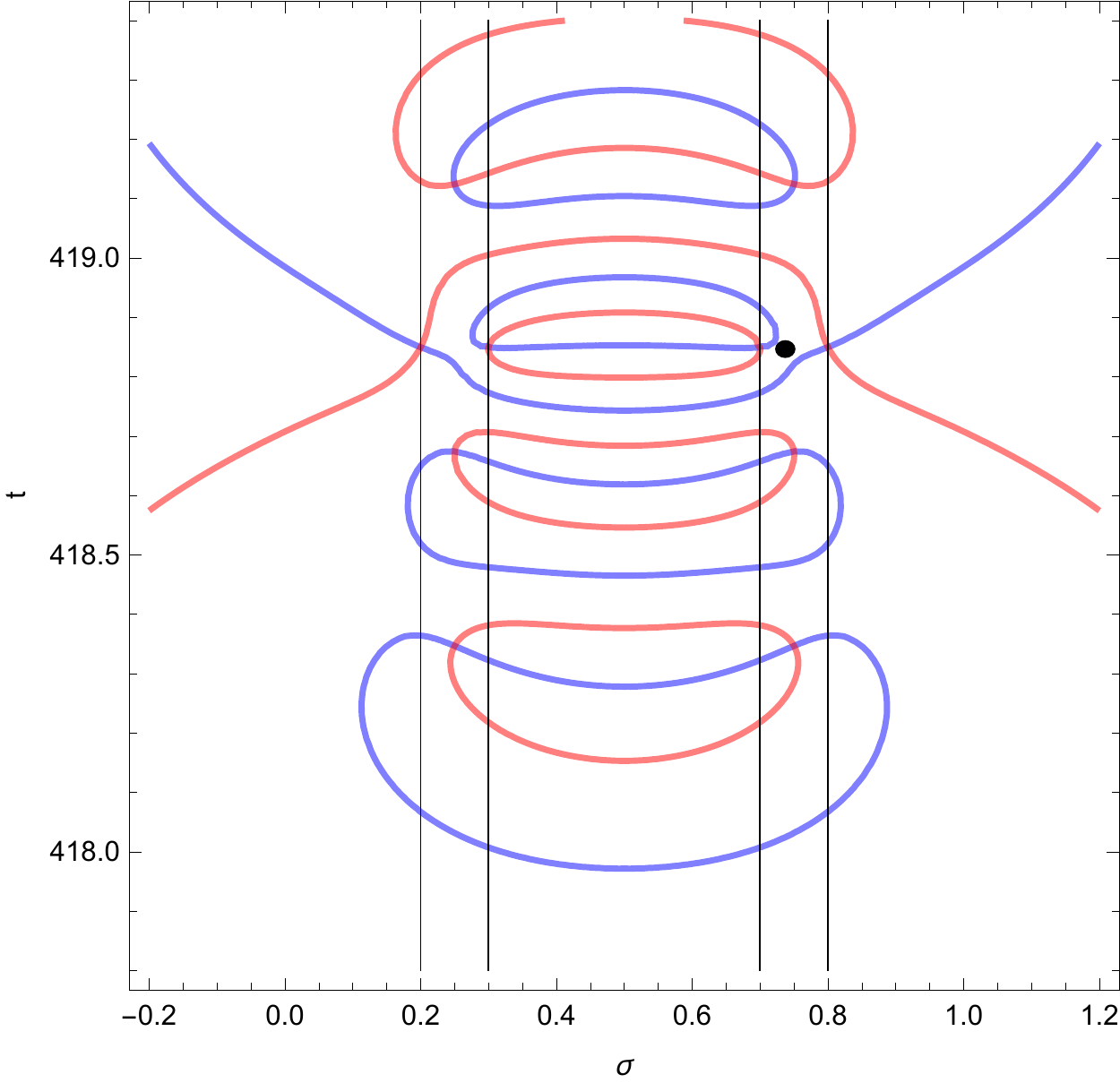}
\caption{A   contour plot showing (blue lines) contours $|{\cal W}_{oa}(s,0.05,0.418.85)|=1$ and (red lines) contours $|{\cal V}_{oa}(s,0.05,418.85)|=1$. Artificial poles and zeros have been introduced on the black lines $\sigma=0.20,0.30, 0.70,0.80$, at positions where the red and blue contours intersect. Other zeros and poles are as in Fig. \ref{Wplot} (left).The black dot marks the position of a new derivative zero. Note that the new closed contour $|{\cal V}_{oa}(s,0.05,418.85)|=1$ is centred on a pole on the critical line near $t=418.853$.}
\label{offaxisWV}
\end{figure}

\begin{theorem}
If ${\cal U}(s)$ has zeros off $\sigma=3/4$ then ${\cal U}'(s)$ has zeros between $\sigma=1/4$ and $\sigma=3/4$.
\label{notp23}
\end{theorem}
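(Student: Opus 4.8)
\emph{Reformulation and structural facts.} The plan is to work with the meromorphic function $g(s):={\cal U}'(s)/{\cal U}(s)=\tfrac{\partial}{\partial\sigma}\log|{\cal U}(s)|+i\tfrac{\partial}{\partial\sigma}\arg{\cal U}(s)$: a zero of ${\cal U}'$ at a point where ${\cal U}$ is regular and nonzero is precisely a zero of $g$, equivalently a critical point of the harmonic function $\log|{\cal U}(s)|$. First I would record the facts that drive everything, all flowing from ${\cal U}(s)=\xi_1(2s-1)/\xi_1(2s)$ being a ratio of $\xi_1$ at arguments differing by unity, together with $\xi_1(w)=\xi_1(1-w)$ and reality on $\mathbb R$: the zeros of ${\cal U}$ are its poles shifted by $1/2$; ${\cal U}(1-s)=1/{\cal U}(s)$, whence $g(1-s)=g(s)$; and $\log|{\cal U}(1-\bar s)|=-\log|{\cal U}(s)|$, whence $\Re g(1-\bar s)=\Re g(s)$ and $\log|{\cal U}|$ vanishes identically on $\sigma=1/2$, where $g$ is therefore real. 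Finally, ${\cal U}$ having a zero off $\sigma=3/4$ is equivalent to ${\cal U}$ having a zero $z_0=\sigma_0+it_\ast$ with $1/2<\sigma_0<3/4$ (and, by reflection, a pole at $1-\bar z_0$ with $1/4<\Re<1/2$); moreover $t_\ast\ge\gamma_1/2>7>4.08046$, $\gamma_1$ being the first ordinate of $\zeta$. So the open strip $\{1/4<\sigma<3/4\}$ itself contains a zero and a pole of ${\cal U}$.

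\emph{A uniform estimate.} The partial-fraction expansion used to prove Theorem \ref{thm2to3} rests only on the pairing of zeros with poles and on the two symmetries above, hence holds for ${\cal U}$ \emph{without} assuming Proposition 1. I would re-run that computation to obtain $\Re g<0$ on $\sigma=3/4$ and $\sigma=1/4$ for all $t>4.08046$; the only new ingredient is that an ``off-axis orbit'' of zeros/poles placed at $\{3/4\pm\delta\pm it_\ast\}$, $\{1/4\pm\delta\pm it_\ast\}$ (as in (\ref{equiv231}), $0<\delta<1/4$) also contributes a \emph{negative} amount to $\Re g$ on those two lines, a short algebraic check. Together with Theorem \ref{thm2to3} itself (applied to the function carrying only the on-axis zeros and poles: $\Re g_{\mathrm{on}}<0$ throughout $1/4\le\sigma\le3/4$) and with the quadratic decay of each orbit's contribution away from its ordinate, this gives: $g\ne0$ on $\sigma=1/4,3/4$; the locus $\{\Re g=0\}$ meets neither line (for $t>4.08046$) nor $\sigma=1/2$ (where $g$ is real and, by the same expansion, strictly negative for $t$ not small); and within $\{1/4<\sigma<3/4\}$ the set $\{\Re g=0\}$ is confined to narrow horizontal bands around the finitely many off-axis ordinates.

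\emph{Producing the derivative zero.} Immediately to the right of the off-axis zero $z_0$ one has $\Re g=\tfrac{\partial}{\partial\sigma}\log|{\cal U}|\to+\infty$, while $\Re g<0$ on $\sigma=3/4$ (and on $\sigma=1/2$ near $t_\ast$); hence the connected component $\Omega$ of $\{\Re g>0\}$ adjoining $z_0$ on its right is a bounded region trapped inside $\{1/2<\sigma<3/4\}$, with $z_0\in\partial\Omega$, with $\Re g>0$ in its interior, and with $\Re g=0$ on $\partial\Omega$ away from the (off-axis) zeros of ${\cal U}$ lying on $\partial\Omega$. Since $\Re g$ is harmonic and positive in $\Omega$, $g$ has no zero in $\Omega$; so I would look at $g$ on $\partial\Omega$, where $g=i\mu$ with $\mu$ real, continuous away from those poles, and $\mu\to\pm\infty$ at each such zero $z_j$ of ${\cal U}$. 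Writing $g\approx(\operatorname{ord}_{z_j}{\cal U})/(s-z_j)+c_j$ near $z_j$ and using that $\Omega$ lies to the right of $z_j$, one finds that, traversing $\partial\Omega$, $\mu$ jumps from $-\infty$ to $+\infty$ at each $z_j$; therefore on each arc of $\partial\Omega$ between consecutive such poles $\mu$ runs from $+\infty$ to $-\infty$ and must vanish. Any such zero of $\mu$ is a zero of $g$, hence of ${\cal U}'$, lying on $\partial\Omega\subset\{1/2<\sigma<3/4\}$, i.e. strictly between $\sigma=1/4$ and $\sigma=3/4$ --- which is the assertion. (A companion zero lies in $\{1/4<\sigma<1/2\}$ by the reflection $\Re g(1-\bar s)=\Re g(s)$.)

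\emph{Main obstacle.} The delicate point is the topology near $z_0$ and near the companion off-axis zero of ${\cal U}$ that sits just outside $\sigma=3/4$: one must verify that $\Omega$ is bounded and that $\partial\Omega$ is a finite union of Jordan curves on which the sign-chase above is legitimate --- in particular that no component of $\{\Re g\le0\}$ is engulfed by $\Omega$, which follows from harmonicity of $\Re g$ away from poles but requires care where $\Re g$ is small in the open strip --- and to handle the case of several off-axis orbits sharing a band. Should these local arguments prove awkward, a safer substitute is a global argument-principle count for $g$ on the boundary of a tall rectangle $1/4\le\sigma\le3/4$: the winding of $g$ on the vertical edges is controlled by $\Re g<0$, that on the horizontal edges by the large-$t$ asymptotics of ${\cal U}'/{\cal U}$ (cf. (\ref{eq-srh4})--(\ref{f1-2})), and the resulting zero count is matched against the number of zeros and poles of ${\cal U}$ inside, which the density formula (\ref{lseq5}) together with the off-axis hypothesis pins down.
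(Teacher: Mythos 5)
Your proposal is correct in outline but takes a genuinely different route from the paper's. The paper splits ${\cal U}$ \emph{multiplicatively} into ${\cal U}_{on}\cdot{\cal U}_{off}$, computes the critical point of the single off-axis factor $|{\cal F}(\cdot,\delta_1,t_1)|$ along the horizontal line $t=t_1$ (finding it at $\tilde\sigma\approx -2\delta_1^2$, i.e.\ just inside $\sigma=3/4$), and then argues perturbatively that the monotone decrease of $|{\cal U}_{on}|$ and the remaining factors displace this critical point further into, but not out of, the strip; those last steps are asserted only ``to leading order''. You instead work \emph{additively} with $g={\cal U}'/{\cal U}$, upgrade the partial-fraction estimate of Theorem~\ref{thm2to3} to the unconditional statement $\Re g<0$ on $\sigma=1/4,\,1/2,\,3/4$, and then trap a bounded component $\Omega$ of $\{\Re g>0\}$ abutting the off-axis zero inside $1/2<\sigma<3/4$, extracting a zero of $g$ on $\partial\Omega$ by an intermediate-value chase on $\Im g$. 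Your ``short algebraic check'' does go through: on $\sigma=3/4$ the mirror zeros at $3/4\pm\delta+it_*$ contribute $-\delta/(\delta^2+b^2)$ and $+\delta/(\delta^2+b^2)$ respectively and cancel exactly, leaving only the strictly negative pole terms (and symmetrically on $\sigma=1/4$); since every off-axis zero of ${\cal U}$ is automatically mirrored about $\sigma=3/4$ by $\xi_1(w)=\xi_1(1-w)$, this covers all cases. What your route buys is the elimination of the perturbative steps --- no comparison between the critical point of ${\cal F}$ alone and that of the full product is needed, only sign information on three vertical lines --- which makes it arguably tighter than the paper's argument. What it costs is the boundary regularity of $\Omega$ (your flagged obstacle), for which the argument-principle fallback on a tall rectangle is the standard repair; note also that capping $\Omega$ from below requires $\Re g<0$ on some horizontal segment of the strip with $4.08<t<t_*$, which follows from the same expansion because every orbit with ordinate below the first off-axis one is on-axis. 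Both proofs rest equally on the unproved representation (\ref{thm23-1}) and on the domination estimate inside Theorem~\ref{thm2to3}, so yours is on no weaker a footing.
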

\begin{proof}
We split the product representation of ${\cal U}(s)$ into a part with zeros and poles on $\sigma=3/4,1/4$ respectively, and a part which is a product of terms of the form (\ref{equiv232}), and with zeros and poles off $\sigma=3/4,1/4$:
\begin{equation}
{\cal U}(s,\delta, t_*)={\cal U}_{on}(s) {\cal U}_{off} (s)={\cal U}_{on}(s) \prod_q{\cal F}(s,\delta_q, t_q).
\label{equiv233}
\end{equation}
We can apply Theorem \ref{thm2to3} to ${\cal U}_{on}(s)$, which is thus monotonic decreasing in modulus for non-small $t$ as $\sigma$ increases between $1/4$ and $3/4$.

As far as $ {\cal U}_{off} (s)$ is concerned, let us take $q=1$ to be the case we are interested in investigating, where certainly $t_1^*$ is positive and very much larger than unity. Considering the term $q=1$ in isolation first, for $t$ close to $t_1$:
\begin{equation}
{\cal F}(\sigma, t,\delta_1, t_1)=\frac{[(\tilde{\sigma}+i t_-)^2-\delta_1^2][(1-i\tilde{\sigma}/(2  t_{1}))^2-\delta^2/(2 t_1)^2]}
{[(\hat{\sigma}+i t_-)^2-\delta_1^2][(1-i\hat{\sigma}/(2 t_1))^2)-\delta^2/(2 t_1)^2)]}.
\label{not231}
\end{equation}
This is zero when $\tilde{\sigma}=\pm 1/2$. Certainly $\delta_1<1/4$, so that the ratio of the second terms in the numerator and denominator in (\ref{not231}) can be replaced by unity when we look for the derivative zero on the line $t_-=0$ i.e. $t=t_1$. This derivative zero
occurs when
\begin{equation}
2 \tilde{\sigma} ( \tilde{\sigma} ^2+1/4+ \tilde{\sigma} -\delta_1^2)=( \tilde{\sigma} ^2-\delta_1^2)(2 \tilde{\sigma} +1),
\label{not232}
\end{equation}
or
\begin{equation}
 \tilde{\sigma} =-\frac{1}{4}(1\pm\sqrt{1-16 \delta_1^2}\sim -2\delta_1^2.
\label{not233}
\end{equation}
It is natural that this should be $O(\delta_1^2)$ since ${\cal F}$ is invariant under $\delta_1\rightarrow-\delta_1$. The derivative zero corresponds to a maximum modulus of this term.

The effect of the term $ {\cal U}_{off} (s) $ is thus to move the derivative zero into the region $1/4<\sigma<3/4$. Since $|{\cal U}_{on}(s)|$ is monotonic decreasing for $\sigma$ increasing in this region, the effect of this term is to move the derivative zero of the modulus of the product
further into the region, as well as to move it laterally off the line $t=t_1$ in general.

We must finally consider the effect of the possible terms $q\ne 1$ on the position of the derivative zero. Let $u_{q1}=t_q-t_1$. The effect of the term from $q$ is then a multiplicative factor
\begin{equation}
1+\frac{i/u_{q1}}{1-i/u_{q1}} \left(1+ \frac{2(t-t_1-i\tilde{\sigma})/u_{q1}}{1-i/u_{q1}}\right), 
\label{not234}
\end{equation}
which will not change the position of the derivative zero  to leading order (since the leading order derivative contribution is independent of $\sigma$).
\end{proof}

Note that, in the artificial example in Fig. \ref{offaxisWV}, the derivative zero of $|{\cal F}|$ occurs at $0.744949+ 415.85 i$, giving a shift
from the  line  $\sigma=3/4$ of $-0.005051$, agreeing well with $-2\delta_1^2$. The derivative zero of $| {\cal U}_{on} (s) {\cal U}_{off} (s)|$ occurs at
$0.737209+418.847 i$, so the shift from the line  $\sigma=3/4$ is $-0.01721$, with the movement  due to $|{\cal U}_{on}(s)|$ exceeding that due to $\delta_1$.
\section{Propositions 3 and 4}
We commence with a result establishing a link between the equivalent Propositions 1, 2 and Proposition  3.
\subsection{Comments based on  Propositions 1 and 2}
\begin{theorem}
Propositions 1 and 2 imply Proposition 3.
\label{P23g1}
\end{theorem}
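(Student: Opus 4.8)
\emph{Overall approach.} The plan is to work with the harmonic function $u(s):=\log|{\cal V}(s)|$ and to read Proposition 3 off the geometry of the set where $|{\cal V}|<1$. From (\ref{eq-srh1}) and (\ref{wdef}) one has ${\cal V}'=2\,{\cal U}'/(1-{\cal U})^2$ and ${\cal W}'=2i\,{\cal V}'/({\cal V}+i)^2$, so away from the zeros and poles of the three functions the zero sets of ${\cal U}'$, ${\cal V}'$ and ${\cal W}'$ coincide, and it suffices to treat ${\cal V}'$. Since $|1+{\cal U}|^2-|1-{\cal U}|^2=4\Re\,{\cal U}$, the assertion $|{\cal V}(s)|>1$ is equivalent to $\Re\,{\cal U}(s)>0$. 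Let $s_0=\sigma_0+it_0$ be a zero of ${\cal V}'$. By Proposition 2, $\sigma_0\notin[1/4,3/4]$; and because ${\cal T}_+$ is even and ${\cal T}_-$ odd under $s\to1-s$ we have ${\cal V}(1-s)=-{\cal V}(s)$, so both $|{\cal V}|$ and the zero set of ${\cal V}'$ are invariant under $s\mapsto1-s$ and under complex conjugation. Hence it is enough to derive a contradiction from the assumption that $s_0$ lies in $\{\,|{\cal V}|<1\,\}$ with $\sigma_0>3/4$ and $t_0\ge0$ (the borderline case $|{\cal V}(s_0)|=1$ is postponed).

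\emph{The structural core.} Under Proposition 1, ${\cal V}$ has no zeros or poles off the critical line apart from finitely many real ones forced by the real zeros of ${\cal T}_\pm$ (e.g. the pole near $s\simeq3.913$), which I would dispose of by inspection in the spirit of the paper's other arguments. Thus $u$ is harmonic off $\sigma=1/2$, tending to $-\infty$ only at the zeros of ${\cal V}$, which all lie on $\sigma=1/2$ and are simple by Theorem \ref{Tpmprops}. The set $\{|{\cal V}|<1\}=\{\Re\,{\cal U}<0\}$ is confined to a vertical strip: from ${\cal U}(s)=\sqrt{\pi}\,\Gamma(s-1/2)\zeta(2s-1)/(\Gamma(s)\zeta(2s))\sim\sqrt{\pi/s}$ one gets $\arg{\cal U}(s)$ in the fourth quadrant, hence $\Re\,{\cal U}(s)>0$, for $\sigma\ge\sigma_1$ uniformly in $t$, and symmetrically for $\sigma\le1-\sigma_1$. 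On the critical line $|{\cal V}(1/2+it)|=|\cot\theta_1(t)|$ by (\ref{T+CL})--(\ref{T-CL}), and since $\theta_1$ is monotone there (by Ki's Lemma on $\theta(t)$ for $t\ge7$, the single turning point of $\theta_1$ below $t\simeq7$ being checked numerically, exactly as with the threshold in Theorem \ref{thm2to3}), the trace of $\{|{\cal V}|<1\}$ on $\sigma=1/2$ is a union of open intervals each carrying exactly one zero of ${\cal V}$, separated by intervals each carrying exactly one pole of ${\cal V}$. Every bounded component $C$ of $\{|{\cal V}|<1\}$ must contain a zero of ${\cal V}$ — otherwise $u$ would be harmonic and negative on $C$ while vanishing on all of $\partial C$, impossible by the minimum principle — and hence meets $\sigma=1/2$. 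The claim I would establish is that, \emph{under Propositions 1 and 2, every component $C$ of $\{|{\cal V}|<1\}$ is a bounded, simply connected Jordan domain containing exactly one (simple) zero of ${\cal V}$ and no pole of ${\cal V}$}. Granting this, ${\cal V}$ restricts on $C$ to a proper holomorphic map onto $\{|w|<1\}$ — proper because $\overline C$ is compact, carries no pole of ${\cal V}$, and $\partial C\subseteq\{|{\cal V}|=1\}$ — of degree equal to its number of zeros, namely one; hence ${\cal V}|_C$ is a biholomorphism and ${\cal V}'$ is nowhere zero on $C$, contradicting $s_0\in C$. This gives $|{\cal V}(s_0)|\ge1$. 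For the borderline case $|{\cal V}(s_0)|=1$, one has ${\cal V}(s)={\cal V}(s_0)\bigl(1+c\,(s-s_0)^m+\cdots\bigr)$ with $m\ge2$, so near $s_0$ the set $\{|{\cal V}|<1\}$ splits into $m$ sectorial petals; injectivity of ${\cal V}|_C$ forces these into $m$ distinct components, making $s_0$ a common boundary point of $m\ge2$ single-zero Jordan domains, which a further short boundary-regularity argument (or direct verification in $\sigma>3/4$) excludes. Translating back, every zero of ${\cal U}'$, ${\cal V}'$, ${\cal W}'$ has $|{\cal V}|>1$, i.e. $\Re\,{\cal U}>0$, which is Proposition 3.

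\emph{Main obstacle.} The real content is the structural claim above. Proposition 1 pins all zeros and poles of ${\cal V}$ to $\sigma=1/2$, and with ${\cal V}(1-s)=-{\cal V}(s)$ the whole level picture is symmetric about the critical line. A component carrying a second zero, or one that is not simply connected (hence encircling a pole of ${\cal V}$), must — by a Morse-type count for the harmonic $u$ with its logarithmic singularities, its boundary value $0$, and no interior maximum — contain a saddle of $u$, i.e. a zero of ${\cal V}'$; the symmetry of the offending feature about $\sigma=1/2$ then forces that saddle into $1/4\le\sigma\le3/4$, contradicting Proposition 2. Making this count precise, and in particular controlling the thin tongues of $\{|{\cal V}|<1\}$ that protrude past $\sigma=3/4$ at the zeros of ${\cal U}$ — where $|{\cal V}|=1$ and Proposition 2 says nothing directly, but where Theorem \ref{thm2to3} together with the exact partial-fraction identity (\ref{thm23-3}) force $|{\cal U}|<1$ and hence a narrow region — is the step I expect to demand the most care, and is presumably where the argument is supplemented by the graphical evidence of Figs. \ref{fig-srh1}--\ref{Wplot}.
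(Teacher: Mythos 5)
Your overall strategy is the same one the paper uses: study the level sets of $|{\cal V}(s)|$, use the equivalence $|{\cal V}(s)|>1\iff\Re\,{\cal U}(s)>0$, exploit the symmetry about the critical line and the fact that all zeros and poles of ${\cal V}(s)$ lie interlaced and simple on $\sigma=1/2$, and derive a contradiction from a saddle of $\log|{\cal V}|$ at a sub-unit modulus. Your preliminary reductions are all correct (the identities ${\cal V}'=2\,{\cal U}'/(1-{\cal U})^2$ and ${\cal W}'=2i\,{\cal V}'/({\cal V}+i)^2$, the antisymmetry ${\cal V}(1-s)=-{\cal V}(s)$, the minimum-principle argument that a bounded component of $\{|{\cal V}|<1\}$ must contain a zero, and the degree-one/biholomorphism conclusion that a single-zero Jordan component carries no critical point of ${\cal V}$). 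The difficulty is that the entire content of the theorem has been pushed into your italicised structural claim --- that every component of $\{|{\cal V}|<1\}$ is a bounded Jordan domain containing exactly one simple zero and no pole --- and the route you sketch for it does not close. Symmetry of an offending component about $\sigma=1/2$ only forces the associated saddle of $\log|{\cal V}|$ either onto the critical line itself or into a conjugate pair $(\sigma_0+it_0,\,1-\sigma_0+it_0)$; such a pair can perfectly well sit at, say, $\sigma_0=0.9$ and $1-\sigma_0=0.1$, both outside $[1/4,3/4]$, so Proposition 2 is not contradicted by symmetry alone. Likewise your confinement of $\{\Re\,{\cal U}<0\}$ to a vertical strip via the asymptotics of ${\cal U}$ only works for $\sigma$ large, and (as you note) the set genuinely protrudes past $\sigma=3/4$ near every zero of ${\cal U}$, so nothing yet prevents the hypothetical saddle from living in one of those tongues with $|{\cal V}|<1$.

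For comparison, the paper's own proof is a contour-tracing narrative rather than a component/degree argument: assuming a saddle $s_0$ with $|{\cal V}(s_0)|<1$, it observes that the nested constant-modulus contours around the pertinent zero of ${\cal V}$ then top out below modulus one and so cannot reach the points ${\cal V}=\pm i$ on the critical line; those points must instead be reached by a modulus-one contour encircling the adjacent pole, which also passes through a zero of ${\cal U}$ on $\sigma=3/4$ (here Proposition 1 is used); and the contours of modulus decreasing from one toward $|{\cal V}(s_0)|$ must then cut $\sigma=3/4$ before nearing $s_0$, which is said to contradict $\sigma_0>3/4$ (Proposition 2). So the paper invokes Proposition 2 to place $s_0$ outside the strip and then argues the shrinking contours cannot reach it, whereas you try to force a saddle into the strip; but in both cases the topological bookkeeping of how the level curves can reconnect is asserted rather than established. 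You have correctly located the crux --- your ``main obstacle'' paragraph is exactly where the paper's argument is also at its least detailed --- but as it stands your proposal does not prove the theorem, and the specific symmetry step you lean on is not valid as stated.
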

\begin{proof}
Given Propositions 1 and 2 hold, we wish to exclude the possibility that Proposition 3 does not hold. If this were the case, we would have  an $s_0$ existing in $t_0>>1$ such that $|{\cal V}(s_0)|<1$, where ${\cal V}'(s_0)=0$ . The contours of constant modulus of ${\cal V}(s)$ around its pertinent zero would then be limited to below $|{\cal V}(s_0)|<1$, and thus would not reach 
the points ${\cal V}(s)=\pm i$ on the critical line. These then would have to be attained by a contour of constant modulus unity around the adjacent pole of ${\cal V}(s)$, which also passes through a zero of ${\cal U}(s)$ on the line $\sigma=3/4$. Further such contours then would have to exist  ranging down in modulus towards $|{\cal V}(s_0)|<1$, and cutting the line $\sigma=3/4$ with moduli smaller than unity before nearing $s_0$. This gives a contradiction, as $\sigma_0$ has to be above $\sigma=3/4$.
\end{proof}
\subsection{The equivalence of   Propositions 3 and 4}
{\bf Remark:} The only possible closed contours whose boundary is an equimodular contour of ${\cal V}(s)$ in $t$ not small  cut the critical line twice.
This is a simple consequence of the Maximum/Minimum Modulus Theorems, since the only non-trivial zeros and poles of ${\cal V}(s)$ 
lie on the critical line. Furthermore, contours of constant modulus touching the critical line are precluded since ${\cal V}'(s)$ is never zero on the critical line.

\begin{theorem}
Proposition 3 holds if and only if  Proposition 4 holds.
\label{P3equiv4}
\end{theorem}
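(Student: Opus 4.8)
The idea is to read both propositions as statements about the level-set topology of the real function $f(s)=\log|{\cal V}(s)|$. This function is harmonic away from the zeros and poles of ${\cal V}$ (all on $\sigma=1/2$), equals $-\infty$ at the zeros and $+\infty$ at the poles, and has saddle points exactly at the zeros of ${\cal V}'$ (equivalently of ${\cal U}'$ or ${\cal W}'$, whose zero-sets coincide). On the critical line ${\cal V}(1/2+it)$ is purely imaginary, so by Ki's Lemma (which makes $\theta_1$ monotone for $t$ not small) $|{\cal V}|$ runs monotonically from $0$ at each zero to $+\infty$ at the next pole, taking the value $1$ at exactly one intermediate point, where ${\cal V}=i$ or ${\cal V}=-i$. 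In this language Proposition 3 says every saddle of $f$ lies strictly above the level $f=0$, and Proposition 4 says each of those critical-line level-$1$ points lies on a closed $|{\cal V}|=1$ contour obtained by inflating a contour outward from a zero of ${\cal V}$.

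For the direction $(\Rightarrow)$ I would fix such a point $q$, lying between a zero $z$ and a pole $p$ of ${\cal V}$ on $\sigma=1/2$, and inflate the nested family $\{|{\cal V}|=c\}$ shrinking onto $z$ as $c\to0^+$. A change of topology in this family can occur only at a saddle of $f$, i.e.\ a zero of ${\cal V}'$, and by Proposition 3 every such has $|{\cal V}|>1$; hence the family stays a chain of simple closed curves for all $c\in(0,1]$. Its level-$1$ member $C_z$ encloses $z$, and by the Minimum Modulus theorem $|{\cal V}|<1$ strictly inside it, so, using the Remark (a closed $|{\cal V}|$-contour meets $\sigma=1/2$ exactly twice) and the fact that zeros and poles alternate along $\sigma=1/2$, $C_z$ encloses no further zero or pole. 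Its two critical-line crossings are therefore the two ${\cal V}=\pm i$ points immediately flanking $z$, one of which is $q$. Thus $q$ is attained by $C_z$, a contour inflated from the zero $z$; since every ${\cal V}=\pm i$ point flanks exactly one zero, Proposition 4 follows.

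For $(\Leftarrow)$ I would argue the contrapositive. If Proposition 3 fails there is $s_0$ with ${\cal V}'(s_0)=0$, $t_0$ not small, $|{\cal V}(s_0)|<1$ (and $\Re s_0\ne1/2$, since ${\cal V}'$ has no zero on the critical line). A descending separatrix of $f$ from $s_0$ terminates at a zero $z$ of ${\cal V}$ and shows that $s_0$ lies on the boundary of the $z$-component of $\{|{\cal V}|<|{\cal V}(s_0)|\}$; hence the first saddle met while inflating the contour family around $z$ occurs at modulus $\le|{\cal V}(s_0)|<1$, so that family contains no simple closed $|{\cal V}|=1$ curve. Were a ${\cal V}=\pm i$ point $q$ flanking $z$ attained by some contour inflated from a zero, that contour — a simple closed $|{\cal V}|=1$ curve bounding a region where $|{\cal V}|<1$ — would, by the argument of $(\Rightarrow)$, enclose exactly the unique zero that $q$ flanks, namely $z$ itself (the only other critical-line singularity $q$ flanks is a pole, which cannot lie in a region where $|{\cal V}|<1$), contradicting the previous sentence. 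Hence $q$ witnesses the failure of Proposition 4.

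The step I expect to be the main obstacle is the topological bookkeeping common to both directions: showing rigorously that the contour families in question are bounded and consist of simple closed curves — this leans on the estimate $|{\cal V}(s)|\sim1+\sqrt{2/t}$ for $\sigma$ bounded away from $1/2$, which confines $\{|{\cal V}|<1\}$ to a bounded neighbourhood of the critical line that splits into one lobe per zero — together with control of non-generic configurations such as coincident saddle values or higher-order zeros of ${\cal V}'$. In particular the Remark must be used in the sharp form that \emph{every} closed equimodular contour of ${\cal V}$ for $t$ not small, not merely one bounding a single zero, meets $\sigma=1/2$ in exactly two points; and the phrase ``attained by contours increasing in modulus from zero'' must be pinned down as ``lies on a simple closed $|{\cal V}|=1$ curve belonging to the nested family inflated from some zero'', so that the clean dichotomy used in $(\Leftarrow)$ is legitimate.
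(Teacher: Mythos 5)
Your proposal is correct and follows essentially the same route as the paper's proof: both directions rest on inflating the nested family of equimodular contours outward from a zero of ${\cal V}(s)$, noting that the family's topology can only change at a zero of ${\cal V}'(s)$, and that the unit-modulus member meets the critical line precisely at the points ${\cal V}=\pm i$. Your version is more explicit about the level-set/saddle-point bookkeeping (and runs the converse by contrapositive rather than directly), but the underlying argument is the one the paper gives.
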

\begin{proof}
If Proposition 3 holds, then to each zero of ${\cal V}(s)$ on the critical line corresponds a set of contours of constant modulus, with that modulus able to increase until a zero of ${\cal V}'(s)$ is encountered. As that zero must occur at a point where $|{\cal V}(s)|>1$, then the set of contours must include the contour of modulus unity. This intersects the critical line at the points ${\cal V}(s)=\pm i$.

Conversely, if Proposition 4 holds, then lines of constant modulus constructed around each zero must attain unity, where on the critical line ${\cal V}(s)=\pm i$. As there are no zeros of ${\cal V}'(s)$ on the critical line, the set of lines of constant modulus must go beyond unity, showing that the relevant zero of ${\cal V}'(s)$ corresponds to a modulus in excess of unity.
\end{proof}

{\bf Remark:}  If either of Propositions 3 and 4 fail, then there must exist contours of constant modulus based around a pole of ${\cal V}(s)$ attaining the relevant zero of ${\cal V}'(s)$, at which the modulus of $|{\cal V}(s)|<1$.

Note that this situation occurs in the artificial example of Fig. \ref{offaxisWV}. However, by the Theorems of this section, it cannot occur
for ${\cal V}(s)$ in the range of $t$ for which the Riemann hypothesis is known to hold numerically.
\subsection{From Proposition 3 towards Proposition 1}
\begin{theorem}
If Proposition 3 holds, then there exists a set  of simple zeros of ${\cal U}(s)$ off the critical line in one-to-one correspondence with the zeros of ${\cal T}_+(s)$ on the critical line.
\label{suffthm}
\end{theorem}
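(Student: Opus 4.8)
The plan is to transport the question from the ``target'' function ${\cal U}(s)$ to the ``reference'' function ${\cal V}(s)$ through the M\"obius relation (\ref{eq-srh1}), which may be solved for ${\cal U}$ as ${\cal U}(s)=({\cal V}(s)-1)/({\cal V}(s)+1)$, and to use Theorem \ref{P3equiv4} so that Proposition~3 can be invoked in its equivalent form, Proposition~4. Because ${\cal V}(s)={\cal T}_+(s)/{\cal T}_-(s)$ and, by Theorem \ref{Tpmprops}, the complex zeros of ${\cal T}_+(s)$ and ${\cal T}_-(s)$ are simple, lie on $\sigma=1/2$ and interlace, a zero of ${\cal T}_+(s)$ on the critical line is precisely a zero of ${\cal V}(s)$ there. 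It therefore suffices to attach to each such zero $\rho=1/2+it_0$ (with $t_0>0$ not small) a single simple zero $s_\rho$ of ${\cal U}(s)$ lying off $\sigma=1/2$, and to verify that $\rho\mapsto s_\rho$ is injective; the resulting set, which has counting function $N({\cal T}_+(s),T)$, is the object one then feeds into the distribution argument towards Proposition~1.

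Assume Proposition~3, hence Proposition~4. First I would take the nested family of constant-modulus contours of ${\cal V}(s)$ surrounding $\rho$ whose moduli increase from $0$; Proposition~4 guarantees that this family reaches the member on which $|{\cal V}(s)|=1$, call it $C$, and that $C$ meets the critical line exactly where ${\cal V}(1/2+it)=\pm i$. From (\ref{T+CL})--(\ref{T-CL}) (equivalently (\ref{trans7})) the restriction ${\cal V}(1/2+it)=-i\cot[\arg\xi_1(1+2it)]$ is purely imaginary, its only unimodular values are $\pm i$, and the two points where these occur straddle $\rho$ on the line. Since Proposition~3 places every zero of ${\cal V}'(s)$ at a point of modulus $>1$, the closed region bounded by $C$ contains no zero of ${\cal V}'(s)$; combining this with the Remark preceding Theorem \ref{P3equiv4} and the Maximum/Minimum Modulus Theorems (no pole of ${\cal V}(s)$ inside $C$; no second zero inside $C$ by the interlacing; $C$ a smooth Jordan curve since ${\cal V}'$ is nonzero on it), the argument principle shows $\arg{\cal V}(s)$ increases by exactly $2\pi$ around $C$. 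Hence ${\cal V}(s)$ attains every unimodular value once on $C$, and in particular there is a unique $s_\rho\in C$ with ${\cal V}(s_\rho)=1$.

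Then ${\cal U}(s_\rho)=0$ by the M\"obius relation, and $s_\rho$ is off the critical line because ${\cal V}=1$ cannot hold there. Differentiating, ${\cal U}'(s_\rho)=\tfrac12\,{\cal V}'(s_\rho)$, and ${\cal V}'(s_\rho)\ne 0$ since $|{\cal V}(s_\rho)|=1$ while Proposition~3 forbids zeros of ${\cal V}'$ at modulus $1$; thus $s_\rho$ is a simple zero of ${\cal U}(s)$. For distinct zeros $\rho,\rho'$ of ${\cal T}_+(s)$ the curves $C,C'$ are disjoint, since a common point would be a non-smooth, hence critical, point of $|{\cal V}(s)|$ at modulus $1$, which Proposition~3 excludes; therefore $s_\rho\ne s_{\rho'}$, and $\rho\mapsto s_\rho$ is the desired one-to-one correspondence between the zeros of ${\cal T}_+(s)$ on the critical line and a set of simple zeros of ${\cal U}(s)$ off it.

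The step I expect to be the main obstacle is the global topological input to the argument-principle count: that the modulus-$1$ contour picked out by Proposition~4 is genuinely a single Jordan curve enclosing exactly the one zero $\rho$ and no pole of ${\cal V}(s)$ --- equivalently, that as the modulus rises from $0$ the contour surrounding $\rho$ neither merges with nor is pinched off by the contours of a neighbouring critical-line pole before modulus $1$ is reached. Proposition~3/4 is exactly what should rule this out, since the saddles of $|{\cal V}(s)|$ (the zeros of ${\cal V}'$) then sit at modulus $>1$ and the poles at modulus $\infty$, and the Remark after Theorem \ref{P3equiv4} records the precise failure mode otherwise; turning this into a statement valid for all $t$ (not merely the range where the geometry has been checked graphically) will require marrying the harmonicity of $\log|{\cal V}(s)|$ away from its zeros and poles to the interlacing and simplicity of Theorem \ref{Tpmprops} and to the non-vanishing of ${\cal V}'(s)$ on the critical line.
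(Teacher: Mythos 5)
Your argument is essentially the paper's own: you build the nested family of equimodular contours of ${\cal V}(s)$ around each zero of ${\cal T}_+(s)$, use Proposition 3 (equivalently 4) to guarantee the family reaches the unit-modulus contour before any saddle of $|{\cal V}(s)|$, and then use the winding of $\arg {\cal V}(s)$ by $2\pi$ around that contour to locate a unique point where ${\cal V}(s)=1$, i.e.\ a simple zero of ${\cal U}(s)$ off the critical line. You are somewhat more explicit than the paper about simplicity (via ${\cal U}'=2{\cal V}'/({\cal V}+1)^2$) and injectivity, and the topological gap you flag at the end --- that the unit-modulus level set is a single Jordan curve enclosing exactly one zero and no pole --- is equally present, and equally unproved, in the paper's version.
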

\begin{proof}
The proof generalises the reasoning of Macdonald  \cite{macd} to functions having poles and zeros. We note that 
$|{\cal V}(s)|=|{\cal V}(1-\overline{s})|$ is a symmetric function under reflection in the critical line, so curves of constant modulus share this property. Starting from a general zero of ${\cal T}_+(s)$, we constrain the family of closed curves on which its modulus is constant.
This family of curves has as its final member that curve of constant modulus touching a zero of ${\cal V}'(s)$, and so by the assumption of this Theorem the corresponding constant modulus exceeds unity. It then follows that there is a closed curve of constant modulus unity enclosing the zero of ${\cal T}_+(s)$. This closed curve intersects the critical line at points where ${\cal V}(s)={\cal U}(s)=\pm i$,
and at every point on it ${\cal U}(s)$ is pure imaginary. The curve encloses one simple zero of ${\cal V}(s)$, and thus the argument of this function increases monotonically through a range of $2\pi$ along it, ensuring that it passes through $\pm 1$ upon it. On each such curve of constant modulus unity  there is then a simple pole and a simple zero of ${\cal U}(s)$, establishing the one-to-one correspondence referred to in the Theorem statement.
\end{proof}

\begin{corollary}
If Proposition 3 holds, then the distribution function for zeros of $\xi_1(2 s-1/2)$ on the critical line with $0<t<T$ is given by (\ref{dfnTpm}).
\label{distbfncor}
\end{corollary}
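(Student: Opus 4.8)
The plan is to run Theorem~\ref{suffthm} against the known counting functions, using the accompanying geometric picture to force the zeros onto the critical line up to a negligible number of exceptions. First I would set up the dictionary between the three relevant zero sets. Under Proposition~3, Theorem~\ref{suffthm} together with the structure extracted in its proof (and the Remark of Section~5.2) gives a family of disjoint simple closed curves $C_p$, one about each zero of ${\cal V}(s)$ on the critical line, say at height $t_p$; each $C_p$ is symmetric under $s\mapsto 1-\bar s$, encloses exactly that one zero of ${\cal V}$, and carries exactly one simple zero $z_p$ and one simple pole $w_p=1-\bar z_p$ of ${\cal U}(s)$, with $\Im(z_p)=t_p+O(1)$ (indeed $O(1/\log t_p)$, since $C_p$ is squeezed between the adjacent poles of ${\cal V}(s)$, i.e.\ the zeros of ${\cal T}_-(s)$). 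Every zero of ${\cal U}(s)$ is a zero of $\xi_1(2s-1)$, hence of the form $z=(1+\rho)/2$ for a nontrivial zero $\rho$ of $\zeta$; correspondingly the zeros of $\xi_1(2s-1/2)$ are exactly the points $z-1/4$, and such a point lies on the critical line precisely when $\Re(z)=3/4$, i.e.\ when $\Re(\rho)=1/2$.

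Next I would bring in the counts. By Theorem~\ref{Tpmprops} the number of curves $C_p$ with $t_p<T$ is $N({\cal T}_+(s),T)=N(\xi_1(2s),T)=\frac{T}{\pi}\log T-\frac{T}{\pi}(\log\pi+1)+O(\log T)$, while the total number of zeros of ${\cal U}(s)$ with imaginary part below $T$ is the same quantity, being the von Mangoldt count of $\zeta$-zeros up to height $2T$. Hence all but $O(\log T)$ of the zeros of ${\cal U}(s)$ up to height $T$ are among the $z_p$, and since $\rho\mapsto 1-\bar\rho$ preserves the ordinate the same holds for their partners. For any $p$ with $t_p<T$ such that both $\rho_p=2z_p-1$ and $1-\bar\rho_p$ have their associated ${\cal U}$-zero among the $z_q$ — all but $O(\log T)$ of them — let $z_{p'}$ be the ${\cal U}$-zero attached to $1-\bar\rho_p$. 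Then $\Im(z_{p'})=\Im(1-\bar\rho_p)/2=\Im(\rho_p)/2=\Im(z_p)$, so $z_p$ and $z_{p'}$ sit at a common height. But $z_p\in C_p$ and $z_{p'}\in C_{p'}$, and both curves cross the critical line symmetrically, so the interior of each contains the critical-line point at that common height; two disjoint Jordan curves each enclosing only its own zero of ${\cal V}(s)$ cannot have overlapping interiors, so $p=p'$, whence $\rho_p=1-\bar\rho_p$ and $\Re(\rho_p)=1/2$: $\rho_p$ lies on the critical line.

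It remains to read off the distribution function. The argument shows that all but $O(\log T)$ of the zeros of $\xi_1(2s-1/2)$ with $0<t<T$ lie on the critical line; since the total number of such zeros is $N(\xi_1(2s),T)=\frac{T}{\pi}\log T-\frac{T}{\pi}(\log\pi+1)+O(\log T)$, so is the number of them on the critical line, which is the assertion (\ref{dfnTpm}).

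The main obstacle is the exhaustiveness step: Theorem~\ref{suffthm} as stated produces only one zero of ${\cal U}(s)$ per zero of ${\cal T}_+(s)$, and one must argue that the curves $C_p$ account for all but $O(\log T)$ of the zeros (and poles) of ${\cal U}(s)$ below a given height. This follows by matching $N({\cal T}_+(s),T)$ against the known count $N(\xi_1(2s),T)$ of all zeros of $\xi_1(2s-1)$ (the von Mangoldt formula scaled to $2T$), but care is needed so that this residual $O(\log T)$ is not inflated when one additionally demands that the partner $1-\bar\rho_p$ be captured, and when the vertical extents of the $C_p$ are converted into clean height cutoffs. A subsidiary point is to confirm that under Proposition~3 the level set $|{\cal V}(s)|=1$ carries no critical points — here Proposition~3, together with the Maximum/Minimum Modulus Theorem and the fact that ${\cal V}'(s)$ never vanishes on the critical line, is used — so that this level set genuinely decomposes into the disjoint simple closed curves $C_p$ on which the overlap argument rests. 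It is precisely the $O(\log T)$ slack in this correspondence that stops the argument short of the conclusion that every zero lies on the critical line, consistent with the heading of this subsection.
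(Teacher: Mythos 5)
Your overall route is the paper's: invoke Theorem~\ref{suffthm} to attach one simple zero $z_p$ of ${\cal U}(s)$ to each zero of ${\cal T}_+(s)$ via the closed contour $C_p$ on which $|{\cal V}(s)|=1$, then match $N({\cal T}_+(s),T)$ against the von Mangoldt count of all zeros of $\xi_1(2s-1)$ up to the corresponding height, so that the captured zeros exhaust the population up to $O(\log T)$. Where you go beyond the paper is in trying to prove that a captured zero actually satisfies $\Re(z_p)=3/4$. The paper does not attempt this; it simply asserts that off-line zeros, occurring in pairs symmetric about $\sigma=3/4$, must be ``sufficiently rare to leave the distribution function unaltered''. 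You correctly sense that the bare counting does not deliver this --- both members of an off-line pair could a priori each be captured by its own contour, leaving every count intact --- and you supply a disjointness/nesting argument to exclude that.

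That added step is where the gap lies. You need the interior of $C_p$ to contain the critical-line point $\tfrac12+i\,\Im(z_p)$. The interior of $C_p$ meets the critical line only in the arc between the two crossings where ${\cal V}(s)=\pm i$, so your premise requires $\Im(z_p)$ to lie between the ordinates of those two crossings. Nothing in Theorem~\ref{suffthm} gives that, and the paper itself warns (in the discussion of Fig.~\ref{Wplot}) that the concavity of the constant-modulus curves means the ordering of zeros and poles along such a contour does not carry over to their $t$ values; that is, $z_p$ can sit on a lobe of $C_p$ whose height lies outside the interval that $C_p$ cuts on the critical line. In that situation two disjoint contours $C_p$ and $C_{p'}$ can each carry one member of a symmetric pair at a common height without either interior containing a common point, and your contradiction evaporates. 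Consequently the strengthened conclusion (all but $O(\log T)$ zeros on the line) is not established, and since the $O(\log T)$ error term in (\ref{dfnTpm}) needs exactly a bound of that strength on the number of off-line zeros, the proof as written does not close. To be fair, the paper's own proof is no tighter at this point --- it asserts the rarity of off-line pairs rather than deriving it --- but your version makes the missing geometric input explicit: one must control the vertical extent of the lobes of $C_p$, or otherwise prevent two disjoint unit-modulus contours from interleaving at a common height.
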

\begin{proof}
From Theorem \ref{suffthm}, there exists for each zero of ${\cal T}_+(s)$ on the critical line a simple zero of ${\cal U}(s)$(i.e. of $\xi_1(2 s-1)$) lying on the contour $|{\cal T}_+(s)/{\cal T}_-(s)|$ enclosing the zero of ${\cal T}_+(s)$. By Theorem \ref{Tpmprops} the distribution function for zeros of ${\cal T}_+(s)$ is the same as that for the  number of zeros of $\xi_1( 2 s-1/2)$ lying in the strip $1/4<\sigma<3/4$ with $0<t<T$. If any such zero were to lie off the critical line, it would have to occur in a pair of zeros symmetric about the critical line. However, we have established that there exists the set of simple zeros of $\xi_1(2 s)$ in precise 1:1 correspondence with the zeros of  ${\cal T}_+(1/2+ i t)$. Hence, any zeros off the critical line would have to be sufficiently rare to leave the distribution function (\ref{dfnTpm}) unaltered. In other words, the distributions functions for zeros {\it on the critical line} of $\xi_1( 2 s-1/2)$, ${\cal T}_+(s)$  and ${\cal T}_-(s)$ are all the same and given by (\ref{dfnTpm}),
if Theorem \ref{suffthm} holds. 
\end{proof}
\subsection{Proposition 3 and Topology}
\begin{theorem}
Assuming Proposition 3, to every closed contour $|{\cal V}(s)|=1$ around a zero of ${\cal V}(s)$ corresponds a  closed contour $|{\cal W}(s)|=1$ around a zero of ${\cal W}(s)$.
\label{Wtheorem}
\end{theorem}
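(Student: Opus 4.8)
The plan is to carry the geometry of the $|{\cal V}(s)|=1$ contours across the Möbius change of variable $w=(v-i)/(v+i)$ that relates ${\cal W}$ to ${\cal V}$ in (\ref{wdef}), and so to produce the required $|{\cal W}(s)|=1$ loop.

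First I would record the dictionary this change of variable provides. Since $v\mapsto(v-i)/(v+i)$ sends $\mathbb R\cup\{\infty\}$ to the unit circle, the upper half plane to $\{|w|<1\}$ and the lower half plane to $\{|w|>1\}$, we get $|{\cal W}(s)|=1\iff{\cal V}(s)\in\mathbb R\cup\{\infty\}$, $|{\cal W}(s)|<1\iff\Im{\cal V}(s)>0$, and $|{\cal W}(s)|>1\iff\Im{\cal V}(s)<0$. In particular every zero of ${\cal V}$ (where ${\cal V}=0$, so ${\cal W}=-1$) and every pole of ${\cal V}$ (where ${\cal W}=1$) lies on $\{|{\cal W}|=1\}$, while the zeros and poles of ${\cal W}$ — simple, interlaced, on the critical line — are the points where ${\cal V}=i$ and ${\cal V}=-i$. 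Also ${\cal W}'(s)=0$ exactly where ${\cal V}'(s)=0$, so (as ${\cal V}'$ never vanishes on $\sigma=1/2$) $\log|{\cal W}|$ has no critical point on the critical line; and ${\cal W}(1-\bar s)=\overline{{\cal W}(s)}$, so $|{\cal W}|$ is symmetric under reflection in $\sigma=1/2$.

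Next I would fix a zero $z_0$ of ${\cal V}$ on the critical line and apply Proposition 3 exactly as in the proof of Theorem \ref{suffthm}: the nested family of closed $|{\cal V}|=c$ curves about $z_0$ has a terminal member passing through a zero of ${\cal V}'$, where $|{\cal V}|>1$, so the family includes a closed Jordan contour $C$ on which $|{\cal V}|=1$, enclosing a region $B$ that contains $z_0$ and no other zero or pole of ${\cal V}$. Since $i,-i$ have modulus one while $|{\cal V}|<1$ throughout $B$, $B$ contains no zero or pole of ${\cal W}$; and by Proposition 3, $B$ (being inside the terminal curve) contains no zero of ${\cal V}'$ either, so $\log|{\cal W}|$ is harmonic and critical-point-free on $B$. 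The contour $C$ meets $\sigma=1/2$ at the zero $\zeta$ of ${\cal W}$ (where ${\cal V}=i$) and at a pole of ${\cal W}$ (where ${\cal V}=-i$); $\zeta$ is the zero of ${\cal W}$ to be matched with $z_0$. Because ${\cal W}(z_0)=-1$, the point $z_0$ lies on $\{|{\cal W}|=1\}$, which inside $B$ is the zero set of the critical-point-free harmonic function $\log|{\cal W}|$ and hence a single analytic arc $A$ through $z_0$, joining the two points of $C$ where $|{\cal W}|=1$ and splitting $B$ into a part with $\Im{\cal V}>0$ (where $|{\cal W}|<1$) and a part with $\Im{\cal V}<0$. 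One checks that $\zeta$ is an interior point of $\{|{\cal W}|<1\}$, and that the critical-line segment joining $\zeta$ to the adjacent pole of ${\cal V}$ lies in $\{|{\cal W}|<1\}$; so the component $\Omega$ of $\{|{\cal W}|<1\}$ containing $\zeta$ contains the part of $B$ with $\Im{\cal V}>0$, an arc of $C$, and that segment.

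Finally I would identify $\partial\Omega$ as the desired closed $|{\cal W}|=1$ contour about $\zeta$. That $\partial\Omega$ contains $A$ and re-meets $\sigma=1/2$ only at the pole of ${\cal V}$ adjacent to $\zeta$ (where ${\cal W}=1$) follows from the reflection symmetry of $|{\cal W}|$ together with (\ref{addedW}), which gives $|{\cal W}(1/2+it)|=|\tan(\theta_1(t)+\pi/4)|$ and so exceeds $1$ on the whole interval of the critical line separating $\zeta$ from the next zero of ${\cal W}$, so $\Omega$ reaches no neighbouring zero of ${\cal W}$; and boundedness of $\Omega$ follows from the asymptotics (\ref{f1-2}), which give $|{\cal W}(\sigma+it)|>1$ for bounded $\sigma$ and large $t$, combined with the functional equation ${\cal W}(1-s)=1/{\cal W}(s)$ and the bubble structure of $\{|{\cal V}|\le1\}$ provided by Proposition 3, which screens $\Omega$ from the far left. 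I expect the main obstacle to be exactly this confinement step — showing the component of $\{|{\cal W}|=1\}$ issuing from $z_0$ neither escapes to infinity nor is captured at a saddle of $\log|{\cal W}|$ at modulus $\le 1$ before closing up. The saddle case is where Proposition 3 (equivalently Proposition 4) is essential: a zero of ${\cal V}'$ lying on $\{|{\cal W}|=1\}$ would be a point with ${\cal V}$ real yet $|{\cal V}|>1$, which is excluded in the relevant range by the nested-contour geometry about $z_0$ and about the pole of ${\cal V}$ next to $\zeta$. Once $\Omega$ is shown bounded and simply connected (a hole would enclose either a pole of ${\cal W}$, impossible in $\{|{\cal W}|<1\}$, or another zero of ${\cal W}$, excluded by the critical-line estimate just quoted), $\partial\Omega$ is a single closed $|{\cal W}|=1$ contour enclosing $\zeta$, giving the correspondence.
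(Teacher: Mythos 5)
Your proposal is correct and takes essentially the same approach as the paper: the paper's proof encodes your dictionary ($|{\cal W}(s)|\lessgtr 1$ according to the sign of $\Im {\cal V}(s)$, equivalently of $\Im {\cal U}(s)$) via the quadrants of $\arg {\cal U}(s)$ in Table 1, and likewise realises the $|{\cal W}(s)|<1$ region around the zero of ${\cal W}(s)$ as the union of the upper half of the interior of $|{\cal V}(s)|=1$ (quadrant $Q_2$) with the adjoining exterior region ($Q_1$), glued along the upper arc where $\arg {\cal U}(s)=\pi/2$. The confinement/boundedness step you flag as the main obstacle is simply asserted rather than argued in the paper, so your treatment is, if anything, the more explicit one.
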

\begin{proof}
Given a closed contour $|{\cal V}(s)|=1$ around a point where ${\cal U}(s)=-1$. From Table 1, this consists of elements in quadrants
$Q_2$ and $Q_3$ of $\arg {\cal U}(s)$ joined by a line $|{\cal W}(s)|=1$ where $\arg {\cal U}(s)=\pm \pi$. On the closed contour $|{\cal V}(s)|=1$ lie a zero and a pole of ${\cal U}(s)$. All four quadrants of $\arg {\cal U}(s)$ meet at the zero and pole of ${\cal U}(s)$. The 
area with ${\cal U}(s)$ in quadrant $Q_2$ is also a region where $|{\cal W}(s)|<1$. Outside the contour $|{\cal V}(s)|=1$ we have regions in $Q_4$ and $Q_1$, the former also being a region with $|{\cal W}(s)|>1$. The $Q_1$ region has $|{\cal W}(s)|<1$ and is joined to the $Q_2$ region along a contour $|{\cal V}(s)|=1$, $\arg {\cal U}(s)=\pi/2$. The desired region $|{\cal W}(s)|<1$ is then formed by the union of the $Q_1$, $Q_2$ regions.
\end{proof}
\begin{corollary} 
Given Proposition 3, then points where ${\cal W}'(s)=0={\cal U}'(s)={\cal V}'(s)$ have $\arg {\cal U}(s)$ lying in the fourth quadrant.
\label{corollW}
\end{corollary}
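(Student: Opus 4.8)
The plan is to read the result off Proposition 3, Theorem~\ref{Wtheorem}, and the dictionary of Table~\ref{table1}. Let $s_0$ be a non-trivial common zero of ${\cal U}'$, ${\cal V}'$ and ${\cal W}'$ — the three conditions coincide at any point that is not itself a zero or pole of one of these M\"obius-related functions — so that $\log{\cal U}$, $\log{\cal V}$ and $\log{\cal W}$ all have a critical point at $s_0$; by the Remark preceding Theorem~\ref{P3equiv4}, $s_0$ is not on the critical line. Proposition 3 gives $|{\cal V}(s_0)|>1$. By Table~\ref{table1} this already places $s_0$ in $Q_1$, in $Q_4$, or on the curve $\arg{\cal U}(s)=0$ that separates them. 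Since $Q_4$ is the unique quadrant in which both $|{\cal V}|>1$ and $|{\cal W}|>1$, and since the curve $\arg{\cal U}(s)=0$ carries $|{\cal W}(s)|=1$, the Corollary follows once we show $|{\cal W}(s_0)|>1$.

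To establish $|{\cal W}(s_0)|>1$ I would argue by contradiction, assuming $|{\cal W}(s_0)|\le1$. From the asymptotics (\ref{asymW})--(\ref{f1-2}) the modulus $|{\cal W}(s)|$ tends to $1$ as one moves away from the critical line, while near a pole of ${\cal W}(s)$ it exceeds $1$; combined with Theorem~\ref{Wtheorem} this shows that, in the relevant range of $t$, the set $\{\,|{\cal W}(s)|\le1\,\}$ is precisely the union of the closed Jordan domains bounded by the contours $|{\cal W}(s)|=1$ encircling the zeros of ${\cal W}(s)$ (the closures of the $Q_1\cup Q_2$ cells identified at the end of the proof of Theorem~\ref{Wtheorem}). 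On each such domain $\log|{\cal W}(s)|$ is harmonic apart from a single logarithmic pole at the enclosed zero of ${\cal W}(s)$ and vanishes on the boundary, so it is a positive constant multiple of the Green's function of a simply connected region, and such a Green's function has no critical point in the closure of the domain. Hence $\log|{\cal W}|$, and therefore ${\cal W}'$, cannot vanish at $s_0$ — a contradiction. A more elementary variant is to follow the nested family of level curves $|{\cal W}(s)|=c$ surrounding the zero of ${\cal W}(s)$ that lies in the sub-unit lobe of the level set $\{\,|{\cal W}(s)|=|{\cal W}(s_0)|\,\}$: by Theorem~\ref{Wtheorem} these remain non-self-intersecting Jordan curves for every $c\le1$, so the first $c$ at which the family pinches at a zero of ${\cal W}'$ is strictly greater than $1$, and $s_0$ is exactly such a pinch point.

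Granting $|{\cal W}(s_0)|>1$, the argument of ${\cal U}(s_0)$ lies in the fourth quadrant, as claimed. I expect the main obstacle to be the step that identifies $\{\,|{\cal W}(s)|\le1\,\}$, for the $t$ under consideration, with the disjoint union of the Jordan domains furnished by Theorem~\ref{Wtheorem} — so that no non-trivial zero of ${\cal W}'$ can slip into an unbounded component of $\{\,|{\cal W}|<1\,\}$ where the Green's-function argument does not apply — and the closely related exclusion of the borderline case $|{\cal W}(s_0)|=1$ (equivalently, that no non-trivial zero of ${\cal W}'$ sits on the curve $\arg{\cal U}(s)=0$). Both rely on Theorem~\ref{Wtheorem} delivering genuine non-singular Jordan curves and on the asymptotics forcing $|{\cal W}(s)|\to1$ at infinity; that is where the reasoning has to be pinned down carefully.
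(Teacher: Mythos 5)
Your proposal reaches the conclusion through the same dictionary the paper uses (Table~\ref{table1}: quadrant $Q_4$ of $\arg{\cal U}$ is exactly $|{\cal V}|>1$ together with $|{\cal W}|>1$) and the same structural input (Theorem~\ref{Wtheorem}), but the decisive step is handled quite differently. The paper's own proof is a single sentence: it invokes the ``correspondence'' of the regions $|{\cal V}(s)|\leq 1$ and $|{\cal W}(s)|\leq 1$ and stops there. As you correctly isolate, that correspondence does not by itself finish the job: a point with $|{\cal V}(s_0)|>1$ could a priori still sit in the $Q_1$ portion of a $|{\cal W}|\leq 1$ cell, where $|{\cal W}(s_0)|<1$ and the conclusion fails, so the real content of the Corollary is the inequality $|{\cal W}(s_0)|>1$. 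Your Green's-function argument supplies that content: on a Jordan domain bounded by $|{\cal W}|=1$ and containing a single simple zero of ${\cal W}$, the function $-\log|{\cal W}|$ is the Green's function of a simply connected domain (equivalently, ${\cal W}$ restricted to that cell is a unimodular multiple of the Riemann map onto the unit disc), hence has no critical points, so no zero of ${\cal W}'$ can lie in the closure of such a cell. This is a genuine and worthwhile addition -- it converts the paper's assertion into an argument. What it buys is rigour at the one point where rigour is needed; what it costs is the extra hypothesis you honestly flag, namely that in the relevant range of $t$ the sublevel set $\{\,|{\cal W}(s)|\leq 1\,\}$ consists precisely of those Jordan cells (note that the functional equation ${\cal W}(1-s)=1/{\cal W}(s)$ forces an unbounded component of $\{\,|{\cal W}|<1\,\}$ far to the left of the critical strip, so the identification can only hold locally near the strip, which is where the non-trivial derivative zeros live). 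That caveat, and the exclusion of the borderline case $|{\cal W}(s_0)|=1$, are gaps shared with -- indeed inherited from -- the paper's own one-line proof, not defects introduced by your approach.
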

\begin{proof}
This follows directly from Theorem \ref{Wtheorem} and Table 1, given that regions $|{\cal V}(s)|\leq 1$ correspond to regions $|{\cal W}(s)|\leq 1$.
\end{proof}

{\bf Remark:} From the arguments of this subsection, it is a consequence that to every zero of ${\cal T}_+(s)$ corresponds a region
around it formed by the union of regions with $\arg {\cal U}(s)$ in the first, second and third quadrants, enclosed within a region with $\arg {\cal U}(s)$ in the fourth quadrant. It should be noted that the  fourth quadrant region is simply connected, while the other three quadrants are found in island regions within the fourth quadrant region.

\section{Supplementary Data}
Data on the zeros up to $t=1000$ on the critical line of the functions ${\cal T}_+(s)$ and ${\cal T}_+(s)$ is available in the electronic supplementary material. 

\section{Acknowledgements}
The author acknowledges gratefully contributions from colleagues (C.G. Poulton, L.C. Botten, the late N-A. Nicorovici) to previous published work
 which laid the foundations for what is reported here.

\end{document}